\definecolor{green}{RGB}{0,50,0}
\newtheorem{theorem}{Theorem}[section]
\newtheorem{proposition}[theorem]{Proposition}
\newtheorem{lemma}[theorem]{Lemma}
\newtheorem{corollary}[theorem]{Corollary}
\theoremstyle{definition}
\newtheorem{definition}[theorem]{Definition}
\newtheorem{hypothesis}[theorem]{Hypothesis}
\theoremstyle{remark}
\newtheorem{remark}[theorem]{Remark}
\DeclareMathOperator{\R}{\mathbb{R}}
\DeclareMathOperator{\Z}{\mathbb{Z}}
\DeclareMathOperator{\X}{\mathbb{X}}
\DeclareMathOperator{\Y}{\mathbb{Y}}
\newcommand{\Banach}{\R^m}
\newcommand{\Banachdual}{{\R^m}}
\newcommand{\domE}{\mathbb{D}}
\newcommand{\reynold}{\mathfrak{R}}
\newcommand{\E}{\mathcal{E}}
\newcommand{\T}{{\mathbb{T}^d}}
\DeclareMathOperator{\dom}{dom}
\DeclareMathOperator{\C}{\mathcal{C}}
\DeclareMathOperator{\N}{\mathbb{N}}
\DeclareMathOperator*{\esssup}{ess\,sup}
\newcommand{\sym}{\text{sym}}
\newcommand{\skw}{\text{skw}}
\DeclareMathOperator*{\argmin}{arg\,min}
\newcommand{\dreidots}{\text{\,\multiput(0,-2)(0,2){3}{$\cdot$}}\,\,\,\,}
\DeclareMathOperator{\ra}{\rightarrow}
\newcommand{\de}{\,\mathrm{d}}
\DeclareMathOperator{\tr}{tr}
\newcommand{\f}[1]{{\pmb{ #1}}}
\DeclareMathOperator{\di}{div}
\newcommand{\curl}{\nabla \times}
\DeclareMathOperator{\interi}{int}
\newcommand{\setdom}{M}
\newcommand{\tU}{\tilde{\f U}}
\renewcommand{\th}{\tilde{h}}
\newcommand{\tm}{\tilde{\f m}}
\renewcommand{\t}{\partial_t}
\DeclareMathOperator{\BV}{{BV}([0,T])}
\newcommand{\ov}[1]{\overline{{#1}}}
\newcommand{\un}[1]{\underline{{#1}}}
\renewcommand{\O}{{\T}}
\newcommand{\F}[2]{{\int_{\T} \f F({ #1}): \nabla  {#2} \de \f x }}
\newcommand{\transpose}{T}
\DeclareMathOperator{\dv}{div}
\newcommand{\dens}{h}
\newcommand{\mom}{\f m}
\newcommand{\denst}{\rho}
\newcommand{\momt}{\f \varphi}
\newcommand{\pres}{p}
\newcommand{\pot}{P}
\newcommand{\dx}{\mathrm{d}\f x}
\newcommand{\dz}{\mathrm{d}\f z}
\newcommand{\ddz}{\frac{\mathrm{d}}{\mathrm{d}z}}
\newcommand{\ddt}{\frac{\mathrm{d}}{\mathrm{d}t}}
\newcommand{\dt}{\mathrm{d}t}
\newcommand{\np}[1]{(#1)}
\newcommand{\nb}[1]{[#1]}
\newcommand{\bp}[1]{\big(#1\big)}
\newcommand{\bb}[1]{\big[#1\big]}
\newcommand{\Bp}[1]{\bigg(#1\bigg)}
\newcommand{\Bb}[1]{\bigg[#1\bigg]}
\newcommand{\set}[1]{\{#1\}}
\newcommand{\setl}[1]{\big\{#1\big\}}
\newcommand{\setc}[2]{\{#1\mid #2\}}
\newcommand{\setcl}[2]{\big\{#1\bigm\vert #2\big\}}
\newcommand{\setcL}[2]{\Big\{#1\Bigm\vert #2\Big\}}
\newcommand{\snorm}[1]{\lvert #1 \rvert}
\newcommand{\snormL}[1]{\Big\lvert #1 \Big\rvert}
\newcommand{\norml}[1]{\big\lVert #1 \big\rVert}
\newcommand{\LR}[1]{L^{#1}}
\newcommand{\LRsigma}[1]{L^{#1}_\sigma}
\numberwithin{equation}{section} 
\title{Existence of energy-variational solutions to hyperbolic conservation laws}
\author{Thomas Eiter\footnotemark[1] \and
Robert Lasarzik%
\footnote{Weierstrass Institute for Applied Analysis and Stochastics,
Mohrenstr. 39, 10117 Berlin, Germany,
\newline
\texttt{thomas.eiter@wias-berlin.de}
\newline
\texttt{robert.lasarzik@wias-berlin.de}
}}
\date{\today}
\begin{document}

\maketitle
\begin{abstract}
We introduce the concept of energy-variational solutions for hyperbolic conservation laws.
Intrinsically, 
these energy-variational solutions fulfill the weak-strong uniqueness
principle and the semi-flow property,
and the set of solutions is convex and weakly-star closed. 
The existence of energy-variational solutions is proven 
via a suitable time-discretization scheme
under certain assumptions.
This general result 
yields 
existence of energy-variational solutions 
to the magnetohydrodynamical equations for ideal incompressible fluids
and to the Euler equations in both the incompressible 
and the compressible case.
Moreover, we show that energy-variational solutions to the Euler equations
coincide with dissipative weak solutions.
\end{abstract}

\noindent
\textbf{MSC2020:} 
35L45, 
35L65, 
35A01, 
35A15, 
35D99, 
35Q31, 
76B03, 
76N10. 
\\
\noindent
\textbf{Keywords:} 
Generalized solutions,
conservation laws,
time discretization,
weak-strong uniqueness,
Euler equations.

\tableofcontents
\section{Introduction}
Hyperbolic conservation laws form a class of nonlinear evolution equations that is omnipresent in mathematical physics and its applications. 
These range from traffic models~\cite{traffic} over thermomechanics~\cite[Sec.~2.3]{dafermos2} to fluid dynamics and weather forecast~\cite{applfluid}. Even though this class of equations is so fundamental and plays such a prominent role in the research of partial differential equations, up to now there is no 
suitable concept
of generalized solutions
such that existence can be established for a large class of general multi-dimensional 
hyperbolic conservation laws.
 To contribute to filling this gap, in this article we 
 propose the concept of energy-variational solutions.

  We consider general conservation laws
  \begin{subequations}\label{eq}
   \begin{alignat}{2}
   \label{eq.pde}
 \t \f U + \di \f F(\f U) ={}& \f 0  && \quad\text{in }\mathbb{T}^d\times (0,T)\,,\\
 \label{eq.iv}
 \f U(\cdot,0) ={}& \f U_0 && \quad\text{in }\mathbb{T}^d\,
 \end{alignat}
   \end{subequations}
on the $d$-dimensional (flat) torus $\T$, $d\in\N$,
and for a finite time $T\in(0,\infty)$.
Here $\f U\colon\T\times(0,T)\to\R^m$, $m\in\N$, denotes the 
unknown state variable, 
$\f F : \R^m \ra \R^{m\times d }$ is a given flux matrix
depending on the state,
and $\f U_0\in\R^m$ denotes prescribed initial data.
As usual (\textit{cf.}~\cite[Sec.~11.4.2]{evans}), we assume that there exists a strictly convex entropy  $ \eta : \R^m \ra [0,\infty ] $
such that the total entropy 
 $ \E (\f U(t )) := \int_{\T} \eta( \f U(t)) \de \f x $
is conserved along smooth solutions,
but which may decrease along non-smooth solutions.
To ensure this, we assume that 
$$ \F{ \tU }{D \eta (\tU)} = 0  $$
  for all suitable $\tU $.
This condition differs from the usual entropy-pair assumption,
 where the existence of a corresponding entropy flux is required,
 but it allows for more general entropy functions
 and therefore a larger class of conservation laws; 
see Remark \ref{rem:integralcondition} below for further explanation.
Observe that we use the letter $\E$ to denote the
total entropy since in the considered examples
the mathematical entropy is always played by the physical energy of the 
respective system. 

Hyperbolic conservation laws are well understood in one spatial dimension,
that is, in the case $d=1$ or $m=1$. 
Going back to the fundamental works of Hopf~\cite{hopf} and Lax~\cite{lax}, the theory is nowadays fairly standard; see~\cite{evans} and~\cite{dafermos2} for example. 
In contrast,
the one-dimensional theory cannot be transferred to the multi-dimensional case
$m,d\geq 2$ immediately,
where a general solution concept that ensures solvability is missing. 
Instead, solution concepts are usually constructed
such that they fit to one specific conservation law,
and often there are several different concepts for the same equation.

A prominent example is the Euler system for inviscid fluid flow,
for which 
DiPerna and Majda established the existence of measure-valued solutions 
in the incompressible case~\cite{DiPernaMajda},
and a weak-strong uniqueness principle
was proven later in~\cite{weakstrongeuler}.
Weak-strong uniqueness is another favorable property for any solution concept
and means that a generalized solution
coincides with a strong solution with the same initial data if the latter exists.
In the same article~\cite{weakstrongeuler}, the weak-strong uniqueness of measure-valued solutions to hyperbolic conservation laws was shown, 
but the existence of these solutions is not known and not expected to hold in general. 
The weak-strong uniqueness principle for dissipative measure-valued solutions,
where the measure-valued formulation is enriched with a defect measure,
was shown for more general conservation laws in~\cite{Gwiazda}, 
but still their existence remains unclear. 
In the case of the compressible Euler equations, 
the existence of dissipative weak solutions, 
defined by enriching the weak formulation with a defect measure,
was shown 
in~\cite{BreitComp}, and a weak-strong uniqueness principle was proved in~\cite{weakstrongCompEul}.

We shall see that both the incompressible and the compressible Euler equations
can be treated in the abstract framework of hyperbolic conservation laws presented here.
In particular, we establish existence of energy-variational solutions to both systems,
and we show that they coincide with the corresponding dissipative weak solutions.
In this respect, we present a new way 
to construct 
dissipative weak solutions for these equations.
As another example, we consider the equations of magnetohydrodynamics 
for an incompressible ideal fluid, which means that the effects of viscosity
and electrical resistivity are neglected.
While there are results on 
the local existence of strong solutions~\cite{Schmidt1988,Secchi1993,DiazLerena2002},
and a weak-strong uniqueness principle for measure-valued solutions was shown in~\cite{Gwiazda},
the global existence of suitably generalized solutions seems to be unknown.
By providing existence of energy-variational solutions to this system,
the present work gives the first result in this direction.
We believe that the class of equations considered here is quite general,
and that the presented theory yields existence results for many other conservation laws.

To explain the main idea of our solution concept, 
let us begin with 
the classical approach towards a generalized solution concept for problem \eqref{eq},
namely the notion of weak solutions,
defined via the weak formulation of \eqref{eq.pde},
that is, the identity 
\begin{equation}\label{eq:weak.intro}
- \langle   \f U, \Phi  \rangle \Big|_{s}^t  + \int_s^t \int_{\T}  \f U \cdot\t \Phi + \f F (\f U ) : \nabla \Phi\de \f x  \de \tau = 0 \,
\end{equation}
for $s,t\in[0,T]$ and all test functions $\Phi$
in a suitable class $\Y$ of test functions.
As mentioned above, 
a natural assumption is that the total entropy is non-increasing along solutions,
which means that $\E(\f U)\big|_s^t\leq 0$ if $s<t$.
Combing this condition with \eqref{eq:weak.intro},
we obtain the variational inequality
\begin{equation}\label{eq:weak.var}
\bb{\E(\f U)- \langle  \f U ,\Phi \rangle }\Big|_{s}^t  + \int_s^t \int_{\T} \f U \cdot  \t \Phi  + \f F (\f U ) : \nabla \Phi\de \f x  \de \tau \leq 0\,
\end{equation}
for $s<t$ and $\Phi\in\Y$. 
Since \eqref{eq:weak.intro} can be recovered from
\eqref{eq:weak.var}
(see also Lemma \ref{lem:var.affine} below),
we may also take \eqref{eq:weak.var} to define 
weak solutions with non-increasing total entropy.
As explained above,
existence of such weak solutions cannot be guaranteed for general 
hyperbolic conservation laws,
which is why we introduce the concept of 
energy-variational solutions.
The main idea is 
to replace the total mechanical entropy $\E(\f U) \in L^\infty(0,T)$ with an auxiliary entropy variable $E\in\BV$,
which may be seen as a turbulent entropy and
may exceed the mechanical entropy of the system.
Additionally, we introduce the difference $\E(\f U)-E\leq 0$,
weighted by a suitable factor $\mathcal K(\Phi)\geq 0$ 
depending on the test function,
into the equation \eqref{eq:weak.var}.
This leads to the inequality 
\begin{equation}
\left[  E - \langle \f U , \Phi  \rangle\right ] \Big|_{s}^t  + \int_s^t \Bb{\int_{\T}\f   U \cdot \t \Phi  + \f F (\f U ) : \nabla \Phi\de \f x + \mathcal{K}(\Phi) \left [\E (\f U) - E \right ]}  \de \tau \leq 0 \, \label{eq:envar.intro}
\end{equation}
for $s<t$ and $\Phi\in\Y$,
which will serve as the basic inequality defining
energy-variational solutions.
In particular, 
if we have $E=\E(\f U)$, then \eqref{eq:weak.var}
is equivalent to \eqref{eq:envar.intro},
and energy-variational solutions 
coincide with weak solutions.
The crucial assumption for our approach
is that the function $\mathcal K$ is chosen in such 
a way that the mapping
\[
\f U \mapsto \F{\f U}{\Phi} + \mathcal{K}(\Phi) \E(\f U) 
\]
is convex for any $\Phi\in\Y$.
Under this assumption, $(\f U,E)$ appears in \eqref{eq:envar.intro}
in a convex way,
so that inequality~\eqref{eq:envar.intro} is preserved
under weak$^*$ convergence. 

Note that the idea of relaxing the formulation of an evolution equation to a variational inequality
and providing convexity by introducing an additional term
goes back to Pierre-Louis Lions
in the context of the incompressible Euler equations~\cite[Sec.~4.4]{lionsfluid}. 
Similar solution concepts have recently been used in the context of 
fluids with viscosity
as the incompressible Navier--Stokes equations~\cite{maxidss}
and viscoelastic fluid models~\cite{EiHoLa22}.

Besides showing existence 
of energy-variational solution via a semi-discretization in time,
which may justify their usefulness for 
numerical implementations,
we further show certain properties 
that are directly included in the solution concept,
for example, a weak-strong uniqueness principle.
Furthermore, we introduce the concept of energy-variational solutions 
in such a way that the semi-flow property is satisfied. 
This is a desirable property of a solvability concept, 
in particular, when uniqueness of solutions cannot be guaranteed; 
see~\cite{basaric,BreitComp} for example. 

As is the case for many generalized solution concepts, 
energy-variational solutions may not be unique but 
instead 
capture all limits of suitable approximations.
Hence, additional selection criteria would have to be applied in order to choose the physically relevant solution. This definitely requires further research, but we shall see that the class of energy-variational solutions has desirable properties for such a selection process.
In particular, we prove that the set of energy-variational solutions is convex and weakly$^\ast$ closed, which might make it possible to define an appropriate minimization problem on this set (\textit{cf.}~\cite{envar}),
and to identify the (unique) minimizer 
with the physically relevant solution.
For scalar conservation laws, Dafermos~\cite{dafermosscalar} proposed the entropy-rate admissibility criterion to select the physically relevant solution. He was able to prove that in a certain class this selection procedure coincides with a selection according to the well established Lax-admissibility criterion~\cite{dafermosscalar}. 
It is worth noticing that for the auxiliary variable $E\in \BV$ the entropy rate $\t E$ is well defined in the space of Radon measures, and the proposed minimization of this value may be defined at least for finitely many points in time.
 Therefore, it might be possible to follow Dafermos's proposed criterion in the present case. 
This is in accordance with the semi-discrete time-stepping scheme proposed in~\eqref{eq:timedis} below, where the energy is minimized in every step,
which might provide additional regularity for the minimizer as well as for the solution in the limit. 
This question will be further investigated in the future, together with the performance of the proposed semi-discretization in numerical experiments.   

The article is organized as follows: In Section~\ref{sec:pre}, we explain the relevant notation and
introduce the notion of energy-variational solutions for hyperbolic conservation laws.
We formulate the main result on their existence 
and collect several auxiliary lemmas. 
Section~\ref{sec:hyper} is concerned with the study of energy-variational solutions to these hyperbolic conservation laws. 
We derive a number of general properties of energy-variational solutions,
and we prove the existence of energy-variational solutions via the convergence of a suitable time-discretization based on an iterative minimization procedure. 
After considering the incompressible hydrodynamical equations and the incompressible Euler equations in Section~\ref{sec:incomp}, we deal with the compressible Euler equations in Section~\ref{sec:comp}.

\section{Preliminaries and main result\label{sec:pre}}
\subsection{Notation}
For $d\in\N$, 
we denote the scalar product of 
two vectors $\f a, \f b\in\R^d$
by $\f a \cdot \f b\coloneqq\f a_j \f b_j$,
and the Frobenius product of two matrices 
$\f A,\f B\in \R^{m\times d}$
by 
$\f A : \f B \coloneqq \f A_{ij}\f B_{ij}$.
Here and in the following, we tacitly use Einstein summation convention
and implicitly sum over repeated indices from $1$ to $d$ or $m$ depending on the context.
By
$\mathbb{R}^{d\times d}_{\sym}$, 
$\R^{d\times d}_{\skw}$ and
$\mathbb{R}^{d\times d}_{\sym,+}$
we denote the sets of symmetric, skew-symmetric and
symmetric positive semi-definite $d$-dimensional matrices, respectively.
The symbols $(\f A)_{\sym}=\frac{1}{2}(\f A+\f A^T)$ 
and $(\f A)_{\skw}=\frac{1}{2}(\f A-\f A^T)$ denote the symmetric and 
the skew-symmetric part
of a matrix $\f A\in\R^{d\times d}$,
and by $(\f A)_{\sym,+}$ and $(\f A)_{\sym,-}$, we denote the 
positive semi-definite and the negative semi-definite part of the symmetric 
matrix $(\f A)_{\sym}$, respectively.
We usually equip matrix spaces with the spectral norm $\snorm{\cdot}_2$
defined by
\begin{equation}\label{eq:spectralnorm}
\snorm{\f A}_2= \sup _{|\f a| = 1} \f a ^T \cdot \f A \f a\,,
\end{equation}
that is, $\snorm{\f A}_2$ is the square root of the largest eigenvalue 
of $\f A^\transpose \f A$.
The dual norm of the spectral norm with respect to the Frobenius product 
is the trace norm and denoted by $| \cdot |'_2$.
For symmetric matrices 
$\f S\in  \mathbb{R}^{d\times d}_{\sym}$
we thus have 
$
\snorm{\f S}_2
= \max_{j\in\{ 1,\ldots,d\}} \snorm{\lambda_j}
$
and 
$| \f S |'_2 = \sum_{i=j}^d \snorm{\lambda _j} $,
where $\lambda _j$, $j=1,\dots,d$, are the (real) eigenvalues of the matrix $\f S$. 
For symmetric positive semi-definite matrices $\f S \in \mathbb{R}^{d\times d}_{\sym,+}$ we may write
$| \f S |'_2 = \sum_{i=j}^d {\lambda _j} = \f S:I = \tr(\f S)$,
where $I$ denotes the identity matrix in $\R^{d\times d}$.

By $\T\coloneqq\R^d/\Z^d$ we denote the $d$-dimensional (flat) torus
equipped with the Lebesgue measure. 
The Radon measures on $\T$ taking values in $\mathbb{R}^{d\times d}_{\sym}$ are denoted by $\mathcal{M}(\T ; \mathbb{R}^{d\times d}_{\sym} ) $, which may be interpreted as the dual space of the corresponding continuous functions, \textit{i.e.,} $\mathcal{M}(\T; \mathbb{R}^{d\times d}_{\sym} ) =(\C(\T; \mathbb{R}^{d\times d}_{\sym} ) )^*$.
Moreover, $\mathcal{M}(\T;  \mathbb{R}^{d\times d}_{\sym,+} ) $
is the class of symmetric positive semi-definite Radon measures,
which consists of Radon measures $\mu \in \mathcal{M}(\T;  \mathbb{R}^{d\times d}_{\sym} ) $
such that for any $\f \xi \in\R^d$ the measure $ \f \xi \otimes \f \xi : \mu $ is nonnegative. 

For a Banach space $\mathbb{X}$, we denote its dual space by $\mathbb{X}^\ast$,
and we use $\langle\cdot,\cdot\rangle$ to denote the associated dual pairing.
The space $\C_w([0,T];\mathbb X )$ denotes the class of functions on $[0,T]$ taking values in $\mathbb X$ that are continuous with respect to the weak topology of $\mathbb X$.
Analogously,  the space $\C_{w^*}([0,T];\mathbb X^* )$ denotes the class of functions on $[0,T]$ taking values in $\mathbb X^*$ that are continuous with respect to the weak$^*$ topology of $\mathbb X^*$.
The space $L^\infty_{w^*} ([0,T];\mathbb X^*)$ is the space of all function  on $[0,T]$ taking values in $\mathbb X^*$ that are Bochner measurable and essentially bounded with respect to $\mathbb X^*$ equipped with the weak$^\ast$ topology.  

We write $x_n\rightharpoonup x$ 
if a sequence $(x_n)\subset \mathbb X$ 
converges weakly to some $x\in \mathbb X$,
and $\varphi_n\xrightharpoonup{\ast}\varphi$ 
if a sequence $(\varphi_n)\subset \mathbb X^\ast$ 
converges weakly$^\ast$ to some $\varphi\in \mathbb X^\ast$.
In spaces of the form
$L^\infty(0,T;\X)$
we usually consider a mixture of the weak convergence in $\X$
and weak$^\ast$ convergence in $L^{\infty}$,
which we call weak$(^\ast)$ convergence,
and we write $u_n\xrightharpoonup{(\ast)} u$ 
if a sequence $(u_n)\subset L^\infty(0,T;\X)$
converges weakly$(^\ast)$ to some $u\in L^\infty(0,T;\X)$,
that is,
if
\begin{equation}
\label{eq:weakconv.LinfL1}
\forall f\in\LR{1}(0,T;\mathbb X^\ast):
\quad
\lim_{n\to\infty}\int_0^T \langle u_n(t), f(t)\rangle \,\dt
=\int_0^T \langle u(t), f(t)\rangle \,\dt.
\end{equation}

The total variation of a function $E:[0,T]\ra \R$ is given by 
$$ | E |_{\text{TV}([0,T])}= \sup_{0=t_0<\ldots <t_n=T} \sum_{k=1}^n \lvert E(t_{k-1})-E(t_k) \rvert\,, $$
where the supremum is taken over all finite partitions of the interval $[0,T]$. 
We denote the space of all integrable functions on $[0,T]$ 
with bounded variation by~$\BV$, and we equip this space with the norm
$\| E \|_{\BV} :=  \| E\|_{L^1(0,T)} + | E |_{\text{TV}([0,T])}$~(cf.~\cite{BV}). 
Recall that an integrable function $E$ has bounded variation if and only if its
distributional derivative $E'$ is an element of 
$\mathcal M([0,T])$, the space of finite Radon measures on $[0,T]$.
Moreover, $\BV$ coincides with the dual space of a Banach space,
see \cite[Remark~3.12]{AmbrosioFusoPallara_BVFunctions_2000} for example,
and we usually work with the corresponding weak$^\ast$ convergence,
which can be characterized by
\[
E_n \xrightharpoonup{*} E \text{ in } \BV \quad \iff \quad
E_n \to E \text{ in } L^1(0,T) \ \text{ and } \  
E_n'\xrightharpoonup{*} E' \text{ in } \mathcal M([0,T]).
\]
Note that the total variation of a decreasing non-negative function $E$ 
can be estimated by the initial value since
\[
| E|_{\text{TV}([0,T])} = \sup_{0=t_0<\ldots <t_n=T}\sum_{k=1}^N \bp{ E(t_{k-1})-E(t_k) }
= E(0) - E(T) \leq E(0) \,.
\]

Let $\eta :\R^d  \to [0,\infty]$ be a convex, lower semi-continuous function with $\eta (\f 0 )= 0$. 
The domain of $\eta$ is defined by $\dom\eta=\setc{\f x\in\R^d}{\eta(\f x)<\infty}$.
We denote the convex conjugate of $\eta$ by $\eta^\ast$, 
which is defined by 
\begin{align*}
\eta^*(\f z) =\sup_{\f y \in \Banach} \left [
\f z\cdot\f y
- \eta(\f y)\right ] \qquad \text{for all }\f z \in \Banachdual \,.
\end{align*}
Then $\eta^*$ is also convex, lower semi-continuous, non-negative and satisfies $\eta^*(\f 0) = 0$.  
We introduce the subdifferential $\partial \eta$ of $\eta$ by 
\begin{align*}
\partial \eta (\f y) := \left  \{ \f z \in \Banachdual \mid 
\forall \tilde{\f y} \in\Banachdual:\ \eta( \tilde{\f y}) \leq \eta (\f y) +
\f z \cdot \np{\tilde{\f y} - \f y}
\right \} \,
\end{align*}
for $\f y\in\Banach$.
The subdifferential $\partial\eta^\ast$ of $\eta^\ast$ is defined analogously.
Then the Fenchel equivalences hold:
For $\f y,\f z\in\R^d$ we have
\begin{equation}\label{eq:fenchel}
\f z \in \partial \eta(\f y)
\quad\iff\quad 
\f y \in \partial \eta^*(\f z )
\quad\iff\quad 
\eta(\f y ) + \eta^*(\f z) = 
\f z \cdot \f y\,.
\end{equation}
A proof of this well-known result can be found in~\cite[Prop~2.33]{barbu} for example.
If $\partial\eta(\f y)$ is a singleton for some $\f y\in \R^m$, 
then $\eta$ is Fr\'echet differentiable in $\f y$ and 
$\partial\eta(\f y)=\set{D\eta(\f y)}$.
In this case, we identify $\partial\eta(\f y)$ with $D\eta(\f y)$.

 \subsection{Main result}

We introduce the notion of energy-variational solutions to 
the hyperbolic conservation law \eqref{eq}.
Consider an entropy functional $\eta : \R^m \to [0,\infty]$, $m\in\N$.
We define the total entropy functional
\begin{equation}
\label{eq:E}
\mathcal E\colon 
L^1(\T;\R^m) 
\to[0,\infty], \qquad
\mathcal{E}(\f U) = \int_{\T}\eta(\f U) \de \f x\,
\end{equation}
with domain 
$\dom\E\coloneqq \{ \f U \in L^1(\T;\R^m) \mid \E(\f U) < \infty \}$.
As the set of test functions,
we consider a closed subspace $\Y$ of $\C^1(\T;\R^m )$.
We next collect further
assumptions on $\eta$, $\f F$, and $\Y$.

\begin{hypothesis}\label{hypo}
Assume that
$\eta : \R^m \to [0,\infty]$ 
is a strictly convex and lower semi-continuous function that satisfies $\eta(\f 0)=0$
and has superlinear growth, that is,
\begin{equation}\label{eq:superlin.growth}
\lim_{ |\f y|\ra \infty} \frac{\eta (\f  y )}{|\f y|} = \infty \,. 
\end{equation}
We assume that the set 
\begin{equation}
\label{setD}
\domE:= 
\{ \f U \in \dom \E \mid \exists \{ \Phi_n\}_{n\in\N} \subset \Y : D\eta^* \circ\Phi_n \rightharpoonup \f U \text{ in } L^1(\T;\R^m)
\}
\end{equation}
is convex. 
Furthermore, let $\f F : \R^m \ra \R^{m\times d }$ 
be a measurable function 
such that
there exists a constant $C>0$ with
 \begin{equation} 
 \forall \f y \in \R^m :\quad | \f F( \f y) | \leq C(\eta(\f y)+1) \,, \label{BoundF}
 \end{equation}
and such that
\begin{equation}\label{eq:integralFentropy}
\forall \Phi\in\Y:\quad
\int_\T \f F( D\eta^\ast(\Phi(x))):\nabla\Phi(x)\,\dx=0.
\end{equation}
We further assume that there exists a convex and continuous function $\mathcal{K}: \Y \ra [0,\infty)  $
such that for any $\Phi\in\Y$ the mapping  
\begin{equation}
\domE \ra \R, \quad 
\f U \mapsto \F{\f U}{\Phi} + \mathcal{K}(\Phi) \E(\f U) \label{ass:convex}
\end{equation}
is  convex, lower semi-continuous  and non-negative.
\end{hypothesis}

Before we further explain the assumptions made in Hypothesis \ref{hypo},
let us introduce the notion of energy-variational solutions and formulate 
the main result on their existence.

\begin{definition}[Energy-variational solutions]\label{def:envar}
We call a pair $(\f U, E)  \in L^\infty (0,T;\domE)\times \BV$ 
an energy-variational solution to \eqref{eq}
if $\E (\f U) \leq E $ a.e.~on $[0,T]$,
if
\begin{equation}
\left[  E - \langle  \f U , \Phi  \rangle\right ] \Big|_{s}^t  + \int_s^t \Bb{\int_{\T}\f U \cdot \t \Phi + \f F (\f U ) : \nabla \Phi\de \f x + \mathcal{K}(\Phi) \left [\E (\f U) - E \right ] } \de \tau \leq 0 \, \label{envarform}
\end{equation}
for a.a.~$s,t\in (0,T)$, $s<t$, including $s=0$ with $\f U(0) = \f U_0$, and all $\Phi \in \C^1(  [0,T]; \Y )$,
\end{definition}

While energy-variational solutions may not have much regularity at the outset, 
we shall see that 
the initial value $\f U_0$ is attained in the weak$^\ast$ sense in $\Y^\ast$,
and that
$\f U$ and $E$ can be redefined
such that 
$E$ is non-increasing  
and $ \f U \in \C_{w^*}([0,T];\Y^*)$,
see Proposition~\ref{prop:reg} below.

As the main result of this article, we show existence of energy-variational solutions
under the previously specified assumptions. 

\begin{theorem}[Existence of energy-variational solutions]\label{thm:main}
Let Hypothesis~\ref{hypo} be satisfied, and let $\f U_0\in\domE$. Then there exists an energy-variational solution in the sense of Definition~\ref{def:envar} with $E(0+)=\mathcal{E}(\f U_0)$.
\end{theorem}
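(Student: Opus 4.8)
The natural approach is to construct approximate solutions by a semi-discretization in time based on the iterative minimization scheme alluded to in the introduction (equation~\eqref{eq:timedis}), and then to pass to the limit using the convexity built into Hypothesis~\ref{hypo}. Fix a time step $\tau = T/N$ and set $t_k = k\tau$. Starting from $\f U^0 = \f U_0$ and $E^0 = \mathcal E(\f U_0)$, I would define $(\f U^k, E^k)$ inductively as a minimizer of a functional that penalizes the increment in an appropriate dual norm on $\Y^\ast$ while forcing the discrete analogue of~\eqref{envarform}; concretely, one minimizes something like $\f U \mapsto \tfrac{1}{2\tau}\norm{\f U - \f U^{k-1}}_{\Y^\ast}^2 + \mathcal E(\f U)$ subject to the discrete weak formulation of $\t \f U + \di \f F(\f U) = \f 0$ tested against $\Y$, or — more in the spirit of the paper — one directly minimizes $\mathcal E(\f U)$ over the (convex, by Hypothesis~\ref{hypo}) set $\domE$ subject to the discrete constraint. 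Existence of a minimizer at each step follows from the direct method: the superlinear growth~\eqref{eq:superlin.growth} gives, via the de la Vallée–Poussin criterion, $L^1$-equiintegrability of minimizing sequences, hence weak $L^1$-compactness; lower semicontinuity of $\mathcal E$ and of the constraint functional (both convex and l.s.c.\ by the hypothesis) closes the argument, and the limit lies in $\domE$ because $\domE$ is convex and weakly closed (being defined via weak $L^1$-limits of $D\eta^\ast\circ\Phi_n$, together with convexity as assumed).

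Next I would derive the a~priori bounds. Choosing the constant test function (or $\Phi = 0$) in the discrete inequality and using~\eqref{eq:integralFentropy} together with the non-negativity in~\eqref{ass:convex} yields the discrete energy inequality $E^k \leq E^{k-1}$, so $E^k \leq \mathcal E(\f U_0)$ for all $k$; since $E^k \geq \mathcal E(\f U^k) \geq 0$ and $(E^k)$ is non-increasing, the piecewise-constant (or piecewise-affine) interpolant $E_\tau$ is bounded in $\BV$ uniformly in $\tau$, with total variation at most $\mathcal E(\f U_0)$. The bound $\mathcal E(\f U^k) \leq \mathcal E(\f U_0)$ combined with~\eqref{eq:superlin.growth} gives a uniform bound on $\f U_\tau$ in $L^\infty(0,T; L^1(\T;\R^m))$ with uniformly equiintegrable slices, hence boundedness in $L^\infty_{w^\ast}(0,T;\Y^\ast)$; the growth bound~\eqref{BoundF} then controls $\f F(\f U_\tau)$ in $L^\infty(0,T;L^1)$. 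Summing the one-step inequalities telescopically, and using continuity of $\mathcal K$ on $\Y$ plus a standard estimate on the difference between a general $\C^1([0,T];\Y)$ test function and its values at grid points, produces the discrete version of~\eqref{envarform} with an error term that is $O(\tau)$.

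For the passage to the limit: extract a subsequence with $E_\tau \xrightharpoonup{\ast} E$ in $\BV$ (so $E_\tau \to E$ in $L^1$ and $E_\tau' \xrightharpoonup{\ast} E'$ in $\mathcal M([0,T])$) and $\f U_\tau \xrightharpoonup{(\ast)} \f U$ in $L^\infty(0,T;\Y^\ast)$, with $\f U(t) \in \domE$ for a.e.\ $t$ by convexity and weak closedness of $\domE$ as above. The term $\int_s^t\int_\T \f F(\f U)\!:\!\nabla\Phi + \mathcal K(\Phi)\mathcal E(\f U)\,\dx\,\dtau$ is convex and l.s.c.\ in $\f U$ by assumption~\eqref{ass:convex}, hence weakly$(^\ast)$ lower semicontinuous; the linear terms $[E - \langle\f U,\Phi\rangle]|_s^t$ and $\int_s^t\int_\T \f U\cdot\t\Phi\,\dx\,\dtau$ pass to the limit directly. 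This yields~\eqref{envarform} for a.e.\ $s<t$ (the exceptional null set coming from the need to pick points where the interpolants converge). Finally, $E(0+) = \mathcal E(\f U_0)$ follows since $E_\tau(0) = \mathcal E(\f U_0)$ for every $\tau$ and the monotone limit preserves this, while $\f U(0) = \f U_0$ is encoded in the $s=0$ case.

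\textbf{Main obstacle.} The delicate point is not any single compactness argument — superlinear growth plus the convexity/l.s.c.\ hypotheses make each of them essentially routine — but rather making the discrete minimization scheme actually produce a relation that is strong enough to survive the limit while still being solvable at each step. One must test the discrete problem against \emph{time-dependent} $\Phi\in\C^1([0,T];\Y)$, whereas the one-step minimization naturally only sees $\Phi$ evaluated at a single time; reconciling these requires summing the steps carefully and controlling, uniformly in $\tau$, both the commutator between the discrete time-difference and $\t\Phi$ and the variation of $\mathcal K(\Phi(\cdot))$ along the grid. A second subtlety is verifying that the minimizer at each step really lies in $\domE$ (not merely in $\dom\E$) — this is exactly why convexity of $\domE$ is imposed in Hypothesis~\ref{hypo} — and that the minimization problem is nonvacuous, i.e.\ the discrete constraint set is nonempty, which should hold because $D\eta^\ast\circ\Phi$ for $\Phi\in\Y$ furnishes admissible competitors by construction of $\domE$ and by~\eqref{eq:integralFentropy}.
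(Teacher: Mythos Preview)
Your overall strategy---time discretization, a~priori energy bounds from the choice $\Phi=0$, compactness via de la Vall\'ee--Poussin for $\f U$ and Helly for $E$, then passage to the limit using the convexity and lower semicontinuity in~\eqref{ass:convex}---matches the paper's and would succeed once the discrete step is set up correctly. The gap is in the formulation and solvability of the discrete problem.

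Neither of your proposed schemes is the right one. A $\Y^\ast$-norm penalty does not obviously produce, at the level of its Euler--Lagrange relation, an inequality of the form~\eqref{envarform} with the correct weight $\mathcal K(\Phi)$ for \emph{every} $\Phi\in\Y$. Your alternative, ``minimize $\mathcal E$ subject to the discrete weak formulation as a constraint'', faces exactly the difficulty you flag at the end: the constraint set (all $\f U$ satisfying the discrete inequality for \emph{all} $\Phi$) must be shown nonempty, and your candidate $D\eta^\ast\circ\Phi$ only makes the inequality good for that particular $\Phi$ thanks to~\eqref{eq:integralFentropy}, not for all $\Phi'\in\Y$ simultaneously.

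The paper resolves this by posing the discrete step as a \emph{saddle-point problem}: minimize over $\f U\in\domE$ with $\mathcal E(\f U)\le\mathcal E(\f U^{n-1})$ the quantity $\sup_{\Phi\in\Y}\mathcal F_n^\tau(\f U\,|\,\Phi)$, where $\mathcal F_n^\tau$ is the one-step energy-variational functional. Fan's min-max theorem (applicable because $\mathcal F_n^\tau$ is convex--concave on a weakly compact set) swaps inf and sup, reducing the question to showing $\inf_{\f U}\mathcal F_n^\tau(\f U\,|\,\Phi)\le 0$ for each fixed $\Phi$. Here the competitor is $\hat{\f U}=D\eta^\ast(\alpha\Phi)$ with $\alpha\in(0,1]$ chosen (via a continuous-interpolation lemma for $\alpha\mapsto\mathcal E(D\eta^\ast(\alpha\Phi))$) so that $\mathcal E(\hat{\f U})\le\mathcal E(\f U^{n-1})$; then~\eqref{eq:integralFentropy} kills the flux term and the subdifferential inequality for $\alpha\Phi\in\partial\mathcal E(\hat{\f U})$ gives the sign. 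This min-max mechanism is the missing idea in your proposal; once it is in place, your compactness and limit arguments go through essentially as you describe.
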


The proof of this theorem relies on a suitable time discretization and is provided
in 
Subsection \ref{subsec:existence}. 
Next we further comment on the assumptions stated in Hypothesis~\ref{hypo}
and on the solution concept of energy-variational solutions.

\begin{remark}\label{rem:integralwelldefined}
Hypothesis~\ref{hypo} ensures, 
that the integrals in \eqref{eq:integralFentropy} and \eqref{ass:convex}
are well defined.
For the integral in \eqref{ass:convex} note that 
the estimate \eqref{BoundF} implies $\snorm{\f F\circ\f U}\in L^1(\T)$ 
for all $\f U\in\domE$.
For the left-hand side of \eqref{eq:integralFentropy},
we first observe that $\partial\eta^\ast$ is single valued
by Lemma \ref{lem:convex} below,
since $\eta$ has superlinear growth.
The Fenchel equivalences \eqref{eq:fenchel} yield the identity
\[
\eta(D\eta^\ast(\Phi(x)))
=D\eta^\ast(\Phi(x)) \cdot \Phi(x)-\eta^\ast(\Phi(x)),
\]
which shows that $x\mapsto\eta(D\eta^\ast(\Phi(x)))$
is a continuous function on the compact set $\T$
and thus bounded for any $\Phi\in\Y$. Hence $ D\eta^* \circ \Phi \in \dom \E$.
Therefore, 
inequality \eqref{BoundF}
yields a bound for the integrand in \eqref{eq:integralFentropy}.
\end{remark}

\begin{remark}
\label{rem:domE}
The convexity assumption on $\domE$ can be seen as a compatibility condition on the space $\Y$ and the entropy $\eta$. 
We note that  $D\eta^*\circ\Phi \in \dom\E$ for $\Phi \in \Y$ as shown in Remark~\ref{rem:integralwelldefined}. 
Moreover, for any sequence $ \{ \f U_n\}_{n\in\N} \subset \domE$ with bounded entropies, $ \E(\f U_n) \leq C $, there is a convergent subsequence with limit $\f U \in \domE$. Indeed, \eqref{eq:superlin.growth} yields the existence of a  subsequence weakly converging to $\f U$ in $L^1(\T;\R^m)$ with $ \E(\f U) \leq C$, see Lemma~\ref{lem:delavalle} below. A diagonalization argument gives a sequence $\{ \Phi_n\}_{n\in\N}\subset\Y$ with $ D\eta^*\circ\Phi_n \rightharpoonup \f U$ in $L^1(\T;\R^m)$, which shows $\f U \in \domE$.

In the case of a quadratic functional $\eta(\f y)=a\snorm{\f y}^2$, $a>0$, 
the set $\domE$ is the weak closure of $\Y$ in $L^1(\T;\R^m)$.
Since $\Y$ is a linear subspace and $\eta$ is quadratic,
this is nothing else than the strong closure of $\Y$ in 
$L^2(\T;\R^m)$.
In particular, the convexity of $\domE$ is satisfied trivially.

In the case $\Y=\C^1(\T;\R^m)$, 
we have $\domE=\dom\E$.
In particular, $\domE$ is convex.
Since $\dom(\partial\E)$ is dense in $\dom\E$ 
(see \cite[Corollary 2.44]{barbu})
this follows from the above approximation property
and $\dom(\partial\E)\subset\domE$.
To see the latter, let 
$\f U\in\dom(\partial\E)$.
From~\cite[Prop.~2.53]{barbu}, we infer that 
the existence of $\Phi \in L^\infty( \T;\R^m)$ such that 
$\Phi ( \f x ) \in \partial \eta(\f U(\f x ))$ for a.a.~$x\in\T$,
that is,
$D\eta^*(\Phi ( \f x )) = \f U(\f x )$
by the Fenchel equivalences~\eqref{eq:fenchel}.
The density of $\C^1(\T;\R^m) $  in $L^\infty( \T;\R^m)$ with respect to the weak$^*$ topology, guarantees the existence of a sequence $\{ \Phi _n\}_{n\in\N} \subset \C^1(\T;\R^m) $ with $ \| \Phi_n \|_{L^\infty(\T;\R^m)} \leq \| \Phi \|_{L^\infty(\T;\R^m)} $ and $ \Phi_n \ra \Phi$ a.e.~in $\T$, see~\cite[Ex.~4.25]{brezis}. Lebesgue's convergence theorem allows to conclude that $D\eta^*(\Phi_n) \ra \f U $ in $L^1(\T;\R^m)$ by the continuity of $D\eta^*$,
which shows $\f U\in\domE$.
\end{remark}

\begin{remark}
Instead of assuming that $\eta(\f 0)=0$ and
$ \eta\geq 0$,
we may consider a function $\eta : \R^m \to (-\infty,\infty]$
that attains its minimum at $\f 0$.
Indeed, the original assumptions
can then be recovered by simply adding a suitable constant to $\eta$.
\end{remark}

\begin{remark}\label{rem:integralcondition}
Equation \eqref{eq:integralFentropy} ensures that the 
total entropy is conserved along smooth solutions.
Indeed, if $\f U$ is a solution 
and all functions are sufficiently smooth,
then we formally have
\[
\ddt\E(\f U)
=\!\!\int_\T \partial_t \f U\cdot D\eta(\f U) \,\dx
=-\!\!\int_\T [\dv \f F(\f U)]\cdot D\eta(\f U) \,\dx
=  \!\!\int_\T \f F( \f U):\nabla D\eta(\f U)\,\dx
= 0,
\]
where the last identity follows from 
\eqref{eq:integralFentropy} with 
$\Phi=D\eta(\f U)$.
Classically, 
this conservation property is ensured by 
requiring the existence of an
entropy flux $\f q: \R^ m \ra \R^d$ such that 
\begin{equation}
\label{eq:entropypair}
 D \eta (\f y) ^T D\f  F (\f y )= D \f q (\f y)^\transpose
\end{equation}
for all $\f y\in\R^m$,
which is a shorthand for the relation
\[
 D \eta (\f y) ^T D \f F_j (\f y )= D \f q_j(\f y)^\transpose \qquad (j=1,\ldots,d).
\] 
Clearly, this identity only makes sense if $\eta$ and, in particular,
$\f F$ are smooth enough.
This smoothness cannot be guaranteed for general conservation laws
as we shall see in Section \ref{sec:comprEuler}
in the context of the compressible Euler equations.
However, if this is the case,
then \eqref{eq:integralFentropy} follows from \eqref{eq:entropypair}.
Indeed, setting $\f U=D\eta^\ast(\Phi)$, that is, $\Phi=D\eta(\f U)$,
and integrating by parts,
we deduce
\[
\begin{aligned}
\int_\T \f F( D\eta^\ast(\Phi)):\nabla\Phi\,\dx
&=\int_\T \f F( \f U):\nabla D\eta(\f U)\,\dx
=-\int_\T \bb{D\eta(\f U)^T D\f F( \f U)}:\nabla \f U\,\dx
\\
&=-\int_\T D \f q(\f U):\nabla \f U\,\dx
=-\int_\T \dv \f q(\f U)\,\dx
= 0 \,.
\end{aligned}
\]

Instead of verifying \eqref{eq:integralFentropy} directly,
one can also show existence of a vector field $\tilde{\f q}\colon\R^m\to\R^d$
such that $\tilde{\f q}\circ D\eta^\ast\in\C^1(\R^m;\R^d)$ and
\begin{equation}\label{eq:entropyflux.new}
\forall \f z\in\R^m:\quad
\f F(D\eta^\ast(\f z))= D\bb{\tilde{\f q}\circ D\eta^\ast}(\f z).
\end{equation}
This implies
\[
\f F(D\eta^\ast(\Phi))\nabla\Phi= \dv \bb{\tilde{\f q}(D\eta^\ast(\Phi))}
\]
for all $\Phi\in\C^1(\T;\R^m)$, so that
\eqref{eq:integralFentropy} follows from the divergence theorem.
Observe that, in contrast to \eqref{eq:entropypair},
condition \eqref{eq:entropyflux.new} does not require 
$\f F$ to be differentiable. 
Moreover, we do not require differentiability of 
$\tilde{\f q}$ and $D\eta^\ast$ 
but merely of their composition.
This distinction can be helpful since
there are standard cases where
$\eta^\ast$ is not twice differentiable,
for example, the compressible Euler equations, 
which we study in Section~\ref{sec:comprEuler}.

Formally, the relations~\eqref{eq:entropyflux.new} and~\eqref{eq:entropypair} 
are equivalent in the case that $\eta^* \in \C^2(\R^m)$
and $D^2\eta^\ast(\f z)$ is invertible at each $\f z\in \R^m$. 
Indeed, choosing $\f y = D \eta^*(\f z)$, we find by~\eqref{eq:fenchel} and the chain rule that
\[
\begin{aligned}
\bb{D\eta(D\eta^*(\f z))^T & D \f F(D\eta^*(\f z))  - D \f q(D \eta^*(\f z)) } 
D^2\eta^*(\f z )
\\
 &=\f z ^T D\nb{ \f F(D\eta^*(\f z)) } - D\nb{ \f q(D \eta^*(\f z)) }
 \\
 &= D \bb{\f z ^T \f F(D\eta^*(\f z)) - \f q(D \eta^*(\f z))}
 -\f F( D \eta^*(\f z)) \,.
\end{aligned}
\]
Hence, 
\eqref{eq:entropypair} is satisfied
if and only if \eqref{eq:entropyflux.new} holds for
$\tilde{\f q}(\f U) = D\eta(\f U)^T \f F (\f U) - \f q(\f U)$. 
\end{remark}

\begin{remark}
In case that $\f F$ is entropy-convex, \textit{i.e.}, there exists a constant $\lambda > 0$ such that 
$|\f F| + \lambda \eta$ is a convex, weakly lower semi-continuous function on $\R^m$, 
we may choose $\mathcal{K}(\Phi)= \lambda\| \nabla \Phi\|_{L^\infty(\T)}$.
We shall use a similar functional $\mathcal K$ in Subsection \ref{sec:magneto},
but finer choices may be possible as we shall see 
in Subsection~\ref{sec:incompEuler} and Section~\ref{sec:comprEuler}.
\end{remark}

\begin{remark}[Boundary conditions]
In order to simplify the analysis, we restrict ourselves to the case of periodic boundary conditions. 
But the method can also be adapted to more general boundary conditions.  These can usually be included into our framework by modification of the space of test functions $\mathbb Y$;
see also Remark~\ref{rem:boundary} below.
\end{remark}

\subsection{Auxiliary results}

Before we start with the analysis of energy-variational solutions, 
we prepare several auxiliary lemmas.
We start with the following basic result 
on an affine linear variational inequality.

\begin{lemma}\label{lem:var.affine}
Let $\mathbb X$ be a Banach space,
and let $a_1, a_2 \in \R$ and $y_1,y_2\in \mathbb X^\ast$ such that
\[
a_1 + \langle  y_1, x \rangle \leq a_2 + \langle  y_2, x \rangle
\]
for all $x\in \mathbb X$. 
Then $a_1\leq a_2$ and $y_1=y_2$.
\end{lemma}

\begin{proof}
The choice $x=0$ directly yields $a_1\leq a_2$.
To infer $y_1=y_2$, let $\bar x\in\X$ and $\lambda >0$.
Choosing $x=\lambda \bar x$ and dividing by $\lambda$, we deduce
 \[
\lambda^{-1}a_1 + \langle y_1, \bar x \rangle 
\leq \lambda^{-1}a_2 + \langle y_2,\bar x\rangle.
\]
A a passage to the limit $\lambda\to\infty$ yields 
$\langle y_1,\bar x\rangle \leq \langle y_2,\bar x\rangle$.
Choosing $x=-\lambda \bar x$ and proceeding in the same way
results in the converse inequality, and we obtain
$\langle y_1,\bar x\rangle = \langle y_2,\bar x\rangle$.
Since $\bar x\in \mathbb X$ was arbitrary,
this yields $y_1=y_2$ and completes the proof.
\end{proof}

The next result yields the equivalence of a
pointwise inequality and its variational formulation.

\begin{lemma}\label{lem:invar}
Let $f\in L^1(0,T)$, $g\in L^\infty(0,T)$ and $g_0\in\R$.
Then the following two statements are equivalent:
\begin{enumerate}[label=\roman*.]
\item
The inequality 
\begin{equation}
-\int_0^T \phi'(\tau) g(\tau) \de \tau  + \int_0^T \phi(\tau) f(\tau) \de \tau - \phi(0)g_0 \leq 0 
\label{ineq1}
\end{equation}
holds for all $\phi \in {\C}^1_c ([0,T))$ with $\phi \geq 0$.
\item
The inequality
\begin{equation}
    g(t) -g(s) + \int_s^t f(\tau) \de \tau \leq 0 
    \label{ineq2}
\end{equation}
holds for a.e.~$s,\, t\in[0,T)$ with $s<t$,
including $s=0$ if we replace $g(0)$ with $g_0$.
\end{enumerate}
If one of these conditions is satisfied, 
then $g$ can be identified with a function in $\BV$ 
such that
\begin{equation}
    g(t+) -g(s-) + \int_s^t f(\tau) \de \tau \leq 0 \,
    \label{ineq.pw}
\end{equation}
for all $s,t\in[0,T)$ with $s\leq t$,
where we set $g(0-)\coloneqq g_0$.
In particular, it holds $g(0+)\leq g_0$ and $g(t+)\leq g(t-)$ for all $t\in(0,T)$.
\end{lemma}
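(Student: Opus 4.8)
The plan is to prove the two implications separately and then establish the $\BV$ regularity and the pointwise inequality \eqref{ineq.pw}. For the direction (ii)$\Rightarrow$(i), I would assume \eqref{ineq2} holds for a.e.\ $s<t$ (including $s=0$ with $g_0$). Given a test function $\phi\in\C^1_c([0,T))$ with $\phi\geq0$, the idea is to write $\phi(\tau)=-\int_\tau^T\phi'(r)\,\de r$, and similarly $\phi(0)=-\int_0^T\phi'(r)\,\de r$, then multiply the inequality for the pair $(0,t)$, namely $g(t)-g_0+\int_0^t f\,\de\tau\leq0$, by $-\phi'(t)\,$; but since $\phi'$ changes sign this is not directly monotone. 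Instead I would use the Fubini/layer-cake approach: integrate the inequality $g(t)-g(s)+\int_s^t f\,\de\tau\leq 0$ against a nonnegative measure built from $-\phi'$. The cleanest route: since $\phi$ has compact support in $[0,T)$ and $\phi\geq0$, write $\phi(t)=\int_0^T \mathbf 1_{\{t<\tau\}}(-\phi'(\tau))\,\de\tau$ only when $\phi$ is nonincreasing, which is not assumed. So I would instead test \eqref{ineq2} with $s=0$ fixed: multiply $g(t)-g_0+\int_0^t f\,\de\tau\le0$ by $\phi'(t)$ is still signed. The robust argument is: from \eqref{ineq2} the function $t\mapsto g(t)+\int_0^t f\,\de\tau$ is (a.e.-)nonincreasing, hence has a nonincreasing representative $G$; then $g=G-\int_0^\cdot f$ a.e., and one computes $-\int_0^T\phi' g+\int_0^T\phi f-\phi(0)g_0 = -\int_0^T\phi' G -\phi(0)g_0 = \int_0^T \phi\,\de G^{}{}' ... $ — more precisely, integrating by parts using that $dG$ is a nonpositive measure and $\phi\ge0$, $\phi(T^-)=0$ gives $-\int_0^T \phi'(\tau)G(\tau)\,\de\tau = \phi(0)G(0+) + \int_{(0,T)}\phi\,\de G \le \phi(0)G(0+)\le\phi(0)g_0$ (the last step from $G(0+)\le g_0$, which follows from \eqref{ineq2} with $s=0$), and this rearranges to \eqref{ineq1}.

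For (i)$\Rightarrow$(ii) I would argue by a density/localization argument. Fix Lebesgue points $s<t$ of $g$ (and of $\tau\mapsto\int_0^\tau f$), and plug into \eqref{ineq1} a sequence of nonnegative test functions $\phi_\varepsilon$ approximating, on $[s,t]$, the characteristic function $\mathbf 1_{[0,t]}$ with a smooth decreasing transition on $(t,t+\varepsilon)$, and equal to $1$ on $[0,t]$; then $-\phi_\varepsilon'$ is a nonnegative bump near $t$ of unit mass, so $-\int_0^T\phi_\varepsilon' g\to g(t)$ at a Lebesgue point, $\int_0^T\phi_\varepsilon f\to\int_0^t f$, and $\phi_\varepsilon(0)=1$, yielding $g(t)-g_0+\int_0^t f\le0$, i.e.\ \eqref{ineq2} with $s=0$. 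For general $0<s<t$ one instead uses $\phi_\varepsilon$ that is a smooth approximation of $\mathbf 1_{[0,t]}$ but additionally has a small downward dip... actually the correct choice is $\phi_\varepsilon\approx \mathbf 1_{[0,t]}$ with a decreasing cutoff near $t$; to get the $g(s)$ term one takes the difference of two such test functions for $t$ and for $s$, which is legitimate since \eqref{ineq1} is not linear in $\phi$ only in the sense that $\phi$ must stay nonnegative — and $\phi_{\varepsilon}^{(t)}-\phi_\varepsilon^{(s)}$ is not nonnegative. The fix: \eqref{ineq1} is equivalent, by Lemma~\ref{lem:var.affine} applied after noting the left side is affine in $\phi$ over the convex cone $\{\phi\ge0\}$, to the statement that the distribution $-g'+f$ (with the boundary term) is a nonpositive measure; concretely, \eqref{ineq1} says $\langle -g'+f,\phi\rangle\le 0$ for all $\phi\ge0$ in $\C^1_c([0,T))$, which means $g'\ge f$ in the sense of distributions on $(0,T)$ together with a one-sided boundary condition at $0$. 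Hence $g'-f=:-\mu$ for a nonnegative Radon measure $\mu$ on $[0,T)$, so $g$ has a $\BV$ representative, and \eqref{ineq2} as well as \eqref{ineq.pw} follow by the fundamental theorem of calculus for $\BV$ functions: $g(t+)-g(s-)=\int_{[s,t]}\de g'=\int_s^t f\,\de\tau-\mu([s,t])\le\int_s^t f\,\de\tau$ (with the convention $g(0-)=g_0$ encoding the atom-free-at-$0$ boundary term).

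Concretely, the cleanest write-up: first show (i) is equivalent to the existence of a nonnegative measure $\mu\in\mathcal M([0,T))$ with $g' = f - \mu$ in $\mathcal D'((0,T))$ and $g(0+)\le g_0$; this is a standard distributional argument (test against $\phi$ supported in $(0,T)$ to get $g'\le f$ on $(0,T)$, hence $g'=f-\mu$; then the boundary term forces $g(0+)\le g_0$ after integrating by parts). From $g\in\BV$ one gets the precise representative with one-sided limits $g(t\pm)$ everywhere. Then \eqref{ineq.pw} is immediate from $g(t+)-g(s-)+\mu([s,t])=\int_s^t f\,\de\tau$, and \eqref{ineq2} follows by evaluating at Lebesgue points where $g(s\pm)=g(s)$ and $g(t\pm)=g(t)$. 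The statements $g(0+)\le g_0$ and $g(t+)\le g(t-)$ are then just the cases $s=t=0$ and $s=t$. Conversely (ii)$\Rightarrow$(i) follows by the integration-by-parts computation sketched above once (ii) gives the nonincreasing representative of $g+\int_0^\cdot f$.

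The main obstacle I expect is the careful handling of the boundary behaviour at $t=0$: the inequality is required to hold "including $s=0$ with $g_0$", which is an extra one-sided constraint not captured by the interior distributional identity $g'=f-\mu$, and one must show precisely that \eqref{ineq1}'s boundary term $-\phi(0)g_0$ is exactly equivalent to $g(0+)\le g_0$ (and that no spurious atom of $\mu$ at $0$ is lost when passing between formulations). The rest — the equivalence of a one-sided variational inequality with "$g'-f$ is a nonpositive measure", and the FTC for $\BV$ functions with one-sided limits — is standard, but the bookkeeping of left/right limits and of the convention $g(0-)=g_0$ needs to be done with care to match the exact form of \eqref{ineq.pw}.
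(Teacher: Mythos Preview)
Your overall strategy is correct, but there is a persistent sign slip in the distributional part: from \eqref{ineq1} one gets $\langle g'+f,\phi\rangle\le 0$ for all $\phi\in\C^1_c((0,T))$ with $\phi\ge 0$, hence $g'+f\le 0$ (not $g'\ge f$), i.e.\ $g'=-f-\mu$ with $\mu\ge 0$; your displayed identity $g(t+)-g(s-)+\mu([s,t])=\int_s^t f$ therefore has the wrong sign and does not yield \eqref{ineq.pw} as written. Once corrected, your measure-theoretic route works. Also, your worry in the direction (i)$\Rightarrow$(ii) about subtracting test functions is unfounded: the paper simply approximates the indicator $\mathbf 1_{(s,t)}$ by a nonnegative trapezoidal $\phi_\varepsilon\in\C^1_c([0,T))$ (rising near $s$, falling near $t$), which is legitimate and gives $g(t)-g(s)+\int_s^t f\le 0$ directly at Lebesgue points; no difference of two plateaus is needed.

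For (ii)$\Rightarrow$(i) your argument is genuinely different from the paper's and arguably cleaner. You observe that $G(t):=g(t)+\int_0^t f$ is essentially nonincreasing with $G(0+)\le g_0$, and then a single integration by parts for the monotone $G$ (using $\phi\ge 0$ and $\phi(T^-)=0$) gives $-\int_0^T\phi'G\le \phi(0)G(0+)\le\phi(0)g_0$, which is exactly \eqref{ineq1}. The paper instead fixes $\phi$, partitions $[0,T]$ according to the sign of $\phi'$, and on each monotonicity interval uses \eqref{ineq.pw} with a suitably chosen $s$ or $t$ to estimate $-\int\phi' g$ piecewise; summing the pieces and using $g(t+)\le g(t-)$ yields \eqref{ineq1}. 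Your route requires the BV integration-by-parts formula (care with the boundary value $G(0+)$ and choice of representative), while the paper's route is entirely elementary but longer and more bookkeeping-heavy. Either argument is complete once you fix the sign.
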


\begin{proof}
To see that \eqref{ineq1} implies \eqref{ineq2},
one can use a standard procedure and
approximate the indicator function of the interval $(s,t)$ by 
elements of $\C^1_c([0,T))$.
For the inverse implication,
first note that \eqref{ineq2} implies that $g$ coincides a.e.~with an element of $\BV$.
Hence, one-sided limits of $g$ exist in each point, 
and we deduce \eqref{ineq.pw} from \eqref{ineq2}.
The choice $s=t$ in \eqref{ineq2} implies $g(t+)\leq g(t-)$
and $g(0+)\leq g_0$.
Now let $0\leq\phi \in {\C}^1_c ([0,T))$ 
and consider a partition $0=s_0\leq t_0< s_1<t_1<\dots< s_N<t_N<T$ of $[0,T]$ 
such that 
\[
\phi' \geq 0 \quad\text{in } [t_{j-1},s_j]\,,
\qquad
\phi' \leq 0 \quad\text{in } [s_j,t_j]\,,
\qquad 
\phi=\phi'=0 \quad \text{in } [t_N,T]\,.
\]
To show \eqref{ineq1}, 
we subdivide the left-hand side of this inequality accordingly.
Since $\phi'\leq0$ in $[s_j,t_j]$,
we can use \eqref{ineq.pw} with $s=s_j$ and integration by parts
to estimate
\[
\begin{aligned}
&-\int_{s_j}^{t_j}\phi'(\tau) g(\tau)\de\tau
\leq 
-\int_{s_j}^{t_j}\phi'(\tau) \Bp{g(s_j-)-\int_{s_j}^{\tau}f(r)\,\de r}\de\tau
\\
&\qquad
= -\phi(t_j)\Bp{g(s_j-)-\int_{s_j}^{t_j}f(r)\,\de r}+\phi(s_j) g(s_j-)
-\int_{s_j}^{t_j}\phi(\tau) f(\tau)\de\tau,
\end{aligned}
\] 
where for $j=0$ we have to replace $g(s_0-)$ with $g_0$.
Since $\phi'\geq0$ in $[t_{j-1},s_j]$, 
we can use \eqref{ineq.pw} with $t=s_j$ in a similar way
to conclude
\[
\begin{aligned}
&-\int_{t_{j-1}}^{s_j}\phi'(\tau) g(\tau)\,\de\tau
\leq 
-\int_{t_{j-1}}^{s_j}\phi'(\tau) \Bp{g(s_j+)+\int_{\tau}^{s_j}f(r)\de r}\de\tau
\\
&\qquad
= -\phi(s_j) g(s_j+)
+\phi(t_{j-1})\Bp{g(s_j+)+\int_{t_{j-1}}^{s_j}f(r)\de r}
-\int_{t_{j-1}}^{s_j}\phi(\tau) f(\tau)\de\tau.
\end{aligned}
\] 
Summing up and using $\phi=\phi'=0$ in $[t_N,T]$, we obtain
\[
\begin{aligned}
-\int_0^T & \phi'(\tau) g(\tau)\,\de\tau+\int_0^T \phi(\tau) f(\tau) \de \tau 
-\phi(0)g_0
\\
&=-\sum_{j=0}^N \int_{s_j}^{t_j}\phi'(\tau) g(\tau)\de\tau
-\sum_{j=1}^N\int_{t_j-1}^{s_j}\phi'(\tau) g(\tau)\de\tau
+\int_0^T \phi(\tau) f(\tau) \de \tau -\phi(0)g_0
\\
&\leq
\sum_{j=1}^N \phi(s_j)\bp{g(s_j-)-g(s_j+)}
+\sum_{j=1}^N \phi(t_j) \Bp{g(s_{j+1}+)-g(s_j-)+\int_{s_j}^{s_{j+1}}f(r)\de r}
\end{aligned}
\]
Since $\phi\geq 0$, 
invoking inequality~\eqref{ineq.pw} 
and that $g(t +)\geq g(t -)$,
we can estimate the terms in the last line by $0$
and finally conclude~\eqref{ineq1}.
\end{proof}

Next we show an adaption of a well-known 
theorem by de la Vall\'ee Poussin, see 
\cite[Sect.~1.2, Theorem 2]{RaoRen_OrliczSpaces_1991} for example.
For the sake of completeness, we give a proof here.
Observe that the statement remains valid if 
$\T$ is replaced with any other finite measure space. 

\begin{lemma}\label{lem:delavalle} 
Let $\psi : \R^m \ra [0,\infty]$ have superlinear growth, \textit{i.e.},
$\lim_{| \f y| \ra \infty} \psi( \f y) / | \f y| = \infty $,
and let $\mathcal{F}\subset L^1(\T;\R^m) $ and
$C>0$ such that 
\[
\forall\, \f U \in \mathcal{F}: \quad
\int_\T \psi(\f U ) \de \f x \leq C \,.
\]
Then the set $\mathcal{F}$ is equi-integrable and therewith 
relatively weakly compact in $L^1(\T; \R^m)$. 
\end{lemma}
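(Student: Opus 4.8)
The plan is to prove the classical de la Vallée Poussin criterion in the direction we need: from a uniform bound on $\int_\T \psi(\f U)\,\dx$ deduce equi-integrability, and then invoke the Dunford--Pettis theorem to obtain relative weak compactness in $L^1(\T;\R^m)$. First I would recall the definition of equi-integrability: the family $\mathcal F$ is equi-integrable if for every $\varepsilon>0$ there is $\delta>0$ such that $\int_A |\f U|\,\dx<\varepsilon$ for all $\f U\in\mathcal F$ and all measurable $A\subset\T$ with $|A|<\delta$. Since $\T$ has finite measure, this is the only condition needed (no tightness issue), and by the Dunford--Pettis theorem it is equivalent to relative weak compactness in $L^1$.

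The core estimate splits the integral of $|\f U|$ over a small set $A$ into the region where $|\f U|$ is large and the region where it is small. Fix $\varepsilon>0$. By superlinear growth, for any $M>0$ there is $R>0$ such that $\psi(\f y)\geq M|\f y|$ whenever $|\f y|\geq R$. Then for any $\f U\in\mathcal F$ and any measurable $A\subset\T$,
\[
\int_A |\f U|\,\dx
= \int_{A\cap\{|\f U|\geq R\}} |\f U|\,\dx + \int_{A\cap\{|\f U|< R\}} |\f U|\,\dx
\leq \frac1M \int_\T \psi(\f U)\,\dx + R\,|A|
\leq \frac{C}{M} + R\,|A|.
\]
Choosing $M=2C/\varepsilon$ fixes a corresponding $R=R(\varepsilon)$, and then choosing $\delta=\varepsilon/(2R)$ we get $\int_A|\f U|\,\dx<\varepsilon$ whenever $|A|<\delta$, uniformly in $\f U\in\mathcal F$. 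This is precisely equi-integrability. (The same bound with $A=\T$ also shows $\sup_{\f U\in\mathcal F}\|\f U\|_{L^1}\leq C/M+R|\T|<\infty$, so the family is bounded, which is part of the Dunford--Pettis hypothesis.) Finally, the Dunford--Pettis theorem (applicable since $\T$ is a finite measure space) gives that a bounded, equi-integrable subset of $L^1(\T;\R^m)$ is relatively weakly compact, completing the proof.

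The argument is essentially routine; there is no serious obstacle. The only point requiring a little care is making sure the reduction to the scalar inequality $\psi(\f y)\geq M|\f y|$ for $|\f y|\geq R$ is uniform and that the threshold $R$ can be chosen independently of $\f U$ (it depends only on $M$, hence only on $\varepsilon$ and $C$), and then quoting Dunford--Pettis in the correct form for finite measure spaces. One could alternatively avoid citing Dunford--Pettis by a direct compactness argument, but since the statement explicitly mentions relative weak compactness, invoking Dunford--Pettis is the cleanest route.
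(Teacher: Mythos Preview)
Your proof is correct and follows essentially the same approach as the paper: the same splitting of $\int_A|\f U|\,\dx$ at level $R$, the same choice $M=2C/\varepsilon$ and $\delta=\varepsilon/(2R)$, and the same appeal to Dunford--Pettis for the weak compactness.
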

\begin{proof}
Let $\varepsilon>0$ and set $M=2C/\varepsilon$.
By assumption, we can choose $R>0$ so large that $\snorm{\f y}>R$ 
implies $\psi(\f y)> M\snorm{\f y}$.
Let $A\subset\T$ be a measurable set
with $\snorm{A}<\frac{\varepsilon}{2R}$.
Then
\[
\int_A\! |\f U| \,\dx 
= \int_{\setc{\f x\in A}{\,\snorm{\f U(\f x)}\leq R}}\!|\f U| \,\dx 
+ \int_{\setc{\f x\in A}{\,\snorm{\f U(\f x)}> R}}\!|\f U| \,\dx
\leq R \snorm{A} 
+ \frac{1}{M}\int_{\T}\!\psi(\f U(\f x)) \,\dx
\leq \varepsilon.
\]
This shows
\[
\lim_{\snorm{A}\to 0} \sup_{\f U\in\mathcal F} \int_A |\f U| \,\dx =0,
\]
that is, the equi-integrability of $\mathcal F$. 
The relative weak compactness of $\mathcal F$ 
now follows from the Dunford--Pattis theorem~\cite[Thm.~3.2.1]{dunford}. 
\end{proof}

The next lemma collects 
useful properties 
of a convex functionals with superlinear growth.

\begin{lemma}\label{lem:convex}
Let
$\eta :\Banach  \ra [0,\infty]$ be a strictly convex, lower semi-continuous function with $\eta (\f 0 )= 0$ and
\eqref{eq:superlin.growth}.
Then the set-valued operator 
$\partial \eta : \Banach \ra \Banachdual$  is maximal monotone and surjective.
Moreover, the convex conjugate $\eta^\ast$ is globally defined and continuously differentiable.
In particular,
\[
\forall \f z\in\R^d: \quad
(\partial\eta)^{-1}(\set{\f z})=\partial\eta^\ast(\f z)=\set{D\eta^\ast(\f z)}.
\]
\end{lemma}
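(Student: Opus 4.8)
The plan is to verify each assertion of Lemma~\ref{lem:convex} in turn, using standard convex-analysis facts that can be found in~\cite{barbu} or~\cite{brezis}. First I would recall that for any proper, convex, lower semi-continuous $\eta$ the subdifferential $\partial\eta$ is maximal monotone; this is classical (e.g.~\cite[Prop.~2.36]{barbu}), so the only nontrivial part of the first claim is surjectivity. For surjectivity, fix $\f z\in\Banachdual$ and consider the function $\f y\mapsto \eta(\f y)-\f z\cdot\f y$. Because $\eta$ has superlinear growth~\eqref{eq:superlin.growth}, this function is coercive (it tends to $+\infty$ as $\snorm{\f y}\to\infty$) and, being convex and lower semi-continuous on the finite-dimensional space $\Banach$, it attains a minimum at some $\f y_\ast$. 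The first-order optimality condition at $\f y_\ast$ reads $\f 0\in\partial\eta(\f y_\ast)-\f z$, i.e.\ $\f z\in\partial\eta(\f y_\ast)$. Hence $\partial\eta$ is onto.

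Next I would treat $\eta^\ast$. Superlinear growth of $\eta$ is exactly what makes $\eta^\ast$ finite everywhere: for any $\f z$, the supremum defining $\eta^\ast(\f z)$ is over a function that goes to $-\infty$ at infinity, hence is finite and attained. So $\dom\eta^\ast=\Banachdual$, and a proper convex function that is finite on all of a finite-dimensional space is automatically continuous. Differentiability of $\eta^\ast$ is equivalent to $\partial\eta^\ast(\f z)$ being a singleton for every $\f z$; by the Fenchel equivalences~\eqref{eq:fenchel}, $\f y\in\partial\eta^\ast(\f z)\iff \f z\in\partial\eta(\f y)\iff \eta(\f y)+\eta^\ast(\f z)=\f z\cdot\f y$. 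If $\f y_1,\f y_2$ both lie in $\partial\eta^\ast(\f z)$, then both realize the equality case, so both minimize $\f y\mapsto\eta(\f y)-\f z\cdot\f y$; since $\eta$ is \emph{strictly} convex, so is this function, and its minimizer is unique, giving $\f y_1=\f y_2$. Thus $\partial\eta^\ast(\f z)=\set{D\eta^\ast(\f z)}$ for a uniquely determined $D\eta^\ast(\f z)$, and $\eta^\ast$ is Gateaux— hence, being convex and finite-dimensional, Fr\'echet— differentiable. Continuity of $D\eta^\ast$ then follows from the fact that the subdifferential of a continuous convex function is locally bounded with closed graph, so a single-valued subdifferential is continuous. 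Finally the displayed identity $(\partial\eta)^{-1}(\set{\f z})=\partial\eta^\ast(\f z)=\set{D\eta^\ast(\f z)}$ is just a restatement of the Fenchel equivalence~\eqref{eq:fenchel} combined with the uniqueness just established.

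The main obstacle, such as it is, is making sure the use of strict convexity is placed correctly: strict convexity of $\eta$ is \emph{not} needed for surjectivity of $\partial\eta$ or for $\eta^\ast$ being finite and $C^1$ as a \emph{set-valued} statement, but it \emph{is} precisely what upgrades $\partial\eta^\ast$ from "nonempty compact convex" to "singleton", which is what makes $D\eta^\ast$ a genuine function. I would be careful to invoke superlinear growth only where coercivity is needed (existence of minimizers, finiteness of $\eta^\ast$) and strict convexity only for uniqueness. Everything else is a citation: maximal monotonicity from~\cite[Prop.~2.36]{barbu}, the Fenchel equivalences~\eqref{eq:fenchel}, and continuity of a single-valued monotone operator with closed graph on a finite-dimensional space.
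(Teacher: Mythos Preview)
Your proposal is correct and follows essentially the same route as the paper: both invoke the standard convex-analysis facts (maximal monotonicity of subdifferentials, Fenchel equivalences, single-valuedness of $\partial\eta^\ast$ from strict convexity of $\eta$, and continuity of a single-valued maximal monotone operator). The only difference is presentational—the paper simply cites results from~\cite{barbu} and~\cite{roubicek} for each step, whereas you spell out the direct arguments (coercivity from superlinear growth giving surjectivity and $\dom\eta^\ast=\Banachdual$; strict convexity giving uniqueness of the minimizer of $\f y\mapsto\eta(\f y)-\f z\cdot\f y$), which makes your version more self-contained.
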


\begin{proof}
The subdifferential $\partial\eta$ induces a maximal monotone operator according to~\cite[Thm.~2.43]{barbu}, and from~\cite[Prop.~2.47]{barbu} we infer that this operator is surjective. 
The Fenchel equivalences \eqref{eq:fenchel} allow to identify this inverse with the subdifferential of the conjugate $\eta^*$. 
Note that $\eta^*$ is even Gateaux-differentiable~\cite[Rem.~2.41 and Prop.~2.40]{barbu}
and continuous with $\dom\eta^\ast=\R^d$~\cite[Prop.~2.25 and Thm.~2.14]{barbu}.
The  assertion that $\partial \eta^*$ is single-valued and continuous can be found in~\cite[Thm.~5.20]{roubicek}.
\end{proof}

We use some of these properties 
to prove the following lemma that shows a way how to 
continuously interpolate between $0$ and a given value in the range of $\E$
defined in~\eqref{eq:E}.

\begin{lemma}\label{lem:surjective} 
In the situation of Lemma \ref{lem:convex}, 
let $\Phi \in\C(\T;\R^m)$ and $\tU= D\eta^\ast\circ\Phi$.
Then the mapping
\[
    \mathcal G\colon [0,1]\to [0,\E(\tU)], \quad
    \alpha \mapsto
     \E ( D \eta^* (\alpha \Phi)) 
\]
is well defined, continuous and surjective.
\end{lemma}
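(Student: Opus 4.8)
The plan is to verify the three asserted properties of $\mathcal G$ separately; only well-definedness needs a genuine idea, and the remaining two are soft consequences. \emph{Well-definedness:} fix $\alpha\in[0,1]$. Since $\eta$ has superlinear growth, Lemma~\ref{lem:convex} guarantees that $\eta^\ast$ is finite on all of $\R^m$ and continuously differentiable and that $D\eta^\ast$ is continuous and single-valued; moreover the Fenchel equivalences~\eqref{eq:fenchel} give $\eta(D\eta^\ast(\f z))=\f z\cdot D\eta^\ast(\f z)-\eta^\ast(\f z)$ for every $\f z\in\R^m$. Hence $\f x\mapsto\eta(D\eta^\ast(\alpha\Phi(\f x)))$ is continuous on the compact torus $\T$, in particular integrable, so $D\eta^\ast(\alpha\Phi)\in\dom\E$ and $\mathcal G(\alpha)=\E(D\eta^\ast(\alpha\Phi))\in[0,\infty)$. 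To see $\mathcal G(\alpha)\le\E(\tU)$, I would prove the pointwise estimate $\eta(D\eta^\ast(\alpha\f z))\le\eta(D\eta^\ast(\f z))$ for all $\f z\in\R^m$ and then integrate over $\f z=\Phi(\f x)$, recalling $\tU=D\eta^\ast\circ\Phi$. The pointwise estimate is where the work lies: writing $\f y_\alpha=D\eta^\ast(\alpha\f z)$ and $\f y_1=D\eta^\ast(\f z)$, one has $\alpha\f z\in\partial\eta(\f y_\alpha)$ and $\f z\in\partial\eta(\f y_1)$ by~\eqref{eq:fenchel}, so the subgradient inequalities read $\eta(\f y_1)\ge\eta(\f y_\alpha)+\alpha\f z\cdot(\f y_1-\f y_\alpha)$ and $\alpha\eta(\f y_\alpha)\ge\alpha\eta(\f y_1)+\alpha\f z\cdot(\f y_\alpha-\f y_1)$; adding them, the linear terms cancel and one is left with $(1-\alpha)\eta(\f y_1)\ge(1-\alpha)\eta(\f y_\alpha)$, which is the claim since $1-\alpha\ge0$. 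This combination of monotonicity of $\partial\eta$ with the positive homogeneity of $\alpha\mapsto\alpha\f z$ is the main (and essentially only non-routine) obstacle.

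\emph{Continuity:} next I would use the same identity $\eta(D\eta^\ast(\f z))=\f z\cdot D\eta^\ast(\f z)-\eta^\ast(\f z)$ together with the continuity of $D\eta^\ast$, $\eta^\ast$ and $\Phi$ to conclude that $(\alpha,\f x)\mapsto\eta(D\eta^\ast(\alpha\Phi(\f x)))$ is continuous on the compact set $[0,1]\times\T$, hence bounded by a constant. Because $\T$ has finite measure, that constant is an integrable majorant, and the dominated convergence theorem then yields continuity of $\alpha\mapsto\int_\T\eta(D\eta^\ast(\alpha\Phi))\de\f x=\mathcal G(\alpha)$.

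\emph{Surjectivity:} finally, I would evaluate $\mathcal G$ at the endpoints: $\mathcal G(1)=\E(D\eta^\ast\circ\Phi)=\E(\tU)$ by definition of $\tU$, while $\mathcal G(0)=\int_\T\eta(D\eta^\ast(\f 0))\de\f x=0$, because $\eta^\ast(\f 0)=-\inf_{\R^m}\eta=0$ (as $\eta\ge0$ and $\eta(\f 0)=0$), so that $D\eta^\ast(\f 0)\in\partial\eta^\ast(\f 0)$ and~\eqref{eq:fenchel} force $\eta(D\eta^\ast(\f 0))=\f 0\cdot D\eta^\ast(\f 0)-\eta^\ast(\f 0)=0$. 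Since $\mathcal G$ is continuous with $\mathcal G(0)=0$ and $\mathcal G(1)=\E(\tU)$, and since $\mathcal G([0,1])\subset[0,\E(\tU)]$ by the well-definedness step, the intermediate value theorem shows that $\mathcal G$ attains every value in $[0,\E(\tU)]$, which is the desired surjectivity.
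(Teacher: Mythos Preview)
Your proof is correct and follows essentially the same approach as the paper: a pointwise analysis via the Fenchel identity $\eta(D\eta^\ast(\f z))=\f z\cdot D\eta^\ast(\f z)-\eta^\ast(\f z)$, followed by dominated convergence for continuity and the intermediate value theorem for surjectivity. The only notable variation is in the pointwise bound: the paper proves full monotonicity of $\alpha\mapsto\eta(D\eta^\ast(\alpha\f z))$ on $[0,1]$ (via one subgradient inequality for $\eta^\ast$ and the monotonicity of $D\eta^\ast$), whereas you prove only the endpoint comparison $\eta(D\eta^\ast(\alpha\f z))\le\eta(D\eta^\ast(\f z))$ by adding two subgradient inequalities for $\eta$---this is exactly what is needed and arguably slightly more direct.
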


\begin{proof}
Fix $\f x\in\T$, and let 
$\f y =\tilde{\f U}(\f x)\in \dom \partial\eta$ 
and $\f z=\Phi(\f x)\in\partial\eta(\f y)$.
Consider 
\[
     f\colon [0,1]\to [0,\infty], \quad
    \alpha \mapsto
    \eta  ( D \eta^* (\alpha \f z)) 
\]
Since $\partial \eta^\ast$ has full domain and is 
single valued according to Lemma~\ref{lem:convex}, 
the mapping is well defined. 
Via the Fenchel equivalences \eqref{eq:fenchel}, we may further express $f$ as
\[
f(\alpha) = \eta( D\eta^* (\alpha \f z)) = \langle  D \eta^* (\alpha \f z ) , \alpha \f z \rangle  - \eta^* ( \alpha \f z) .
\] 
This shows that $f(\alpha)$ is finite
and that $f$ is continuous 
since $\eta^*$ and $D\eta^* $ are continuous by Lemma~\ref{lem:convex}.
Moreover, $f(0)=0$ and $f(1)=\eta(\f y)$,
and via Fenchel's identity and the monotonicity of $D \eta^*$, 
we further observe for $0 \leq  \beta < \alpha \leq 1 $ that
\[
\begin{aligned}
   f(\alpha )-f(\beta) &={} 
\langle    D\eta^*(\alpha \f z) , \alpha \f z\rangle   -  \langle D\eta^*(\beta \f z) \,\beta \f z\rangle  -\bp{\eta^*(\alpha \f z)- \eta^*(\beta \f z)} \\
   &\geq \langle  D\eta^*(\alpha \f z) ,\alpha \f z\rangle   -  \langle D\eta^*(\beta \f z) ,\beta \f z\rangle + \langle D\eta^* (\alpha \f z),\beta \f z - \alpha \f z\rangle  
   \\
   &=   \frac{\beta}{\alpha - \beta } \left \langle D\eta^*(\alpha \f z)-  D\eta^*(\beta \f z) ,\alpha \f z - \beta \f z \right \rangle \geq 0 \,.
\end{aligned}
\]
Hence, $f$ is a continuous and non-decreasing mapping with range $[0,\eta(\f y)]$.
This implies that the mapping $\mathcal G$ is well defined with
$0=\mathcal G(0)\leq\mathcal G(\alpha)\leq \mathcal G(1)=\mathcal E(\tU)$
for all $\alpha\in[0,1]$.
Using Lebesgue's theorem on dominated convergence,
we further conclude that $\mathcal G$ is continuous,
which also implies that $\mathcal G$ is surjective.
\end{proof}

We shall also make use of the following result on the extension
of certain linear functionals.
\begin{lemma}\label{lem:hahn}
Let  $ \f l : \mathcal{V} \ra \R$ be a linear continuous functional,
where $ \mathcal{V} $ is a closed subspace of 
\[
\mathcal U:= \setcl{\f \varphi\in \C_0^1(\O\times[0,T);\R^d)}{ \int_\O\f\varphi\de x =0}.
\]
Set 
\[
\mathcal I \colon \mathcal U \to L^1(0,T; \C(\O; \R_{\mathrm{sym}}^{d\times d })),\qquad
\mathcal I(\f \psi)=(\nabla\f\psi)_{\mathrm{sym}},
\]
and let
$\mathfrak p : L^1(0,T;\C(\O ; \R^{d\times d}_{\sym}))  \ra \R$ be a sublinear mapping such that 
\begin{equation}
    \forall \f \psi\in \mathcal V \colon \quad \langle \f l , \f \psi \rangle \leq \mathfrak p ( \mathcal{I}(\f\psi ) ) \,.\label{est:l}
\end{equation}
Then there exists
 an element
$$\mathfrak R \in (L^1(0,T; \C(\O; \R_{\sym}^{d\times d }
)))^*=L^\infty_{w^*}(0,T;\mathcal{M}(\O ; \R_{\sym}^{d\times d}
)) $$ 
satisfying 
\[
\forall\Phi\in L^1(0,T; \C(\O; \R_{\sym}^{d\times d })):\ \langle -\mathfrak R, \Phi\rangle \leq \mathfrak p(\Phi),
\qquad
\forall\f \psi\in \mathcal{V}: \ 
\langle -\mathfrak R, \mathcal I(\f\psi)\rangle = \langle \f l, \f \psi\rangle.
\]
\end{lemma}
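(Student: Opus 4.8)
The statement is essentially a Hahn--Banach sandwich theorem dressed up in the language of a composition operator $\mathcal I$. The plan is to transport the linear functional $\f l$ from $\mathcal V$ to the image space $\mathcal I(\mathcal V)\subset L^1(0,T;\C(\O;\R^{d\times d}_{\sym}))$, extend it there using the sublinear bound $\mathfrak p$, and then invoke the known identification of the dual of $L^1(0,T;\C(\O;\R^{d\times d}_{\sym}))$ with $L^\infty_{w^*}(0,T;\mathcal M(\O;\R^{d\times d}_{\sym}))$ to produce $\mathfrak R$. The one subtlety to address first is that $\mathcal I$ need not be injective, so ``$\f l\circ\mathcal I^{-1}$'' is not literally defined; I would handle this by checking that $\f l$ is constant on the fibres of $\mathcal I$, which follows from \eqref{est:l} applied to $\pm\f\psi$: if $\mathcal I(\f\psi)=0$ with $\f\psi\in\mathcal V$, then $\langle\f l,\f\psi\rangle\le\mathfrak p(0)\le 0$ and likewise $\langle\f l,-\f\psi\rangle\le 0$, whence $\langle\f l,\f\psi\rangle=0$. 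Hence $\f l$ descends to a well-defined linear functional $\tilde{\f l}$ on the subspace $\mathcal I(\mathcal V)$ of $L^1(0,T;\C(\O;\R^{d\times d}_{\sym}))$, characterized by $\langle\tilde{\f l},\mathcal I(\f\psi)\rangle=\langle\f l,\f\psi\rangle$, and \eqref{est:l} becomes $\langle\tilde{\f l},w\rangle\le\mathfrak p(w)$ for all $w\in\mathcal I(\mathcal V)$.

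Next I would apply the analytic (Hahn--Banach) extension theorem for a linear functional dominated by a sublinear functional: since $\mathfrak p\colon L^1(0,T;\C(\O;\R^{d\times d}_{\sym}))\to\R$ is sublinear (positively homogeneous and subadditive) and $\langle\tilde{\f l},\cdot\rangle\le\mathfrak p$ on the subspace $\mathcal I(\mathcal V)$, there is a linear extension $L\colon L^1(0,T;\C(\O;\R^{d\times d}_{\sym}))\to\R$ with $L|_{\mathcal I(\mathcal V)}=\tilde{\f l}$ and $L(w)\le\mathfrak p(w)$ for all $w$. To conclude that $L$ is continuous (so as to represent it via the dual space), I would use that $\mathfrak p$ is sublinear and, being defined on all of the Banach space and finite-valued, is automatically bounded above on a neighbourhood of $0$ (or one argues directly that $L(w)\le\mathfrak p(w)$ and $-L(w)=L(-w)\le\mathfrak p(-w)$ give a two-sided bound $|L(w)|\le\max\{\mathfrak p(w),\mathfrak p(-w)\}$, which is a continuous function of $w$ by sublinearity, hence bounded on the unit ball). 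Therefore $L\in\bigl(L^1(0,T;\C(\O;\R^{d\times d}_{\sym}))\bigr)^*$, and by the stated identification of this dual space we may write $L=\langle-\mathfrak R,\cdot\rangle$ for a unique $\mathfrak R\in L^\infty_{w^*}(0,T;\mathcal M(\O;\R^{d\times d}_{\sym}))$.

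Finally, translating back: for all $\Phi\in L^1(0,T;\C(\O;\R^{d\times d}_{\sym}))$ we have $\langle-\mathfrak R,\Phi\rangle=L(\Phi)\le\mathfrak p(\Phi)$, and for all $\f\psi\in\mathcal V$ we have $\langle-\mathfrak R,\mathcal I(\f\psi)\rangle=L(\mathcal I(\f\psi))=\langle\tilde{\f l},\mathcal I(\f\psi)\rangle=\langle\f l,\f\psi\rangle$, which are exactly the two claimed properties. The main obstacle, modest as it is, is the bookkeeping around the non-injectivity of $\mathcal I$ and making sure the Hahn--Banach extension is applied on the \emph{image} space rather than on $\mathcal U$ itself; once $\f l$ is seen to be well-defined on $\mathcal I(\mathcal V)$, the rest is the standard sublinear-domination Hahn--Banach theorem together with the cited duality $\bigl(L^1(0,T;\C(\O;\R^{d\times d}_{\sym}))\bigr)^*=L^\infty_{w^*}(0,T;\mathcal M(\O;\R^{d\times d}_{\sym}))$. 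One should also note for cleanliness that the continuity of $\mathcal I$ as a map $\mathcal U\to L^1(0,T;\C(\O;\R^{d\times d}_{\sym}))$ is only needed to know $\mathcal I(\mathcal V)$ is a genuine subset of the target Banach space, which is immediate.
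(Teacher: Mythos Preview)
Your approach is essentially the same as the paper's: push $\f l$ forward to a linear functional on the image $\mathcal I(\mathcal V)$, apply the Hahn--Banach theorem with the sublinear dominator $\mathfrak p$, and identify the extension via the duality $(L^1(0,T;\C(\O;\R^{d\times d}_{\sym})))^*=L^\infty_{w^*}(0,T;\mathcal M(\O;\R^{d\times d}_{\sym}))$. The only structural difference is how well-definedness on the image is handled: the paper shows that $\mathcal I$ is actually injective on $\mathcal V$ (if $(\nabla\f\psi)_{\sym}=0$ then $\f\psi(\cdot,t)$ is affine, hence constant by periodicity, hence zero by the mean-zero condition), whereas you argue, without invoking injectivity, that $\f l$ vanishes on $\ker\mathcal I\cap\mathcal V$ via $\langle\f l,\pm\f\psi\rangle\le\mathfrak p(0)=0$. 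Your argument is slightly more general and equally valid here.

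One caveat: your justification that the Hahn--Banach extension $L$ is continuous is not airtight. A finite-valued sublinear functional on an infinite-dimensional Banach space need \emph{not} be continuous (e.g.\ the absolute value of a discontinuous linear functional built from a Hamel basis), so neither ``finite-valued on the whole space implies bounded above near $0$'' nor ``$\max\{\mathfrak p(w),\mathfrak p(-w)\}$ is continuous by sublinearity'' is a valid step as stated. The paper, incidentally, glosses over this point as well; in the actual applications the specific $\mathfrak p$ used is of the form $\mathfrak p(\Phi)=\int_0^T c\,\|(\Phi)_-\|_{\C(\T;\R^{d\times d})}\,g(t)\,\de t$ with $g\in L^\infty(0,T)$, which is manifestly bounded by a constant times $\|\Phi\|_{L^1(0,T;\C)}$, and \emph{that} is what makes the extension continuous. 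For a fully rigorous proof of the lemma as stated you should either add continuity (or norm-boundedness) of $\mathfrak p$ as a hypothesis, or note that it holds in the cases of interest.
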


\begin{proof}
First consider $\f \psi\in\mathcal V$ with $\mathcal I(\f \psi)=0$.
This implies that $\f \psi(\cdot,t)$ is affine linear,
and since $\f \psi\in\mathcal V$ is spatially periodic and has vanishing mean value,
this is only possible for $\f \psi=0$. 
Therefore, $\mathcal I$ is injective,
and on its image $\mathcal W=\mathcal I(\mathcal V)$
we can define the functional $L$ by $\langle L,\Psi\rangle=\langle \f l,\f \psi\rangle$
for $\Psi=\mathcal I(\f \psi)\in\mathcal W$.
Then estimate \eqref{est:l} implies
\begin{equation}\label{linearformest}
\langle L, \Psi \rangle \leq \mathfrak p(\Psi)
\end{equation}
for all $\Psi\in\mathcal W\subset L^1(0,T;\C(\O ; \R^{d\times d} _{\sym}))$.
By the Hahn--Banach theorem (see e.g.~\cite[Thm~1.1]{brezis}), we may extend $L$ 
from $\mathcal W$ to a linear functional on $L^1(0,T;\C(\O ; \R^{d\times d} _{\sym}))$. 
Using the Riesz representation theorem, 
we may identify this extension with an object $-\mathfrak R$
such that the asserted properties are satisfied.
\end{proof}

\section{Properties and existence of energy-variational solutions}
\label{sec:hyper}

In this section we collect several general properties 
of energy-variational solutions that follow directly from Definition~\ref{def:envar}.
Moreover, under additional regularity assumptions, we can show 
a relative entropy inequality, which yields a weak-strong uniqueness principle.
Finally, in Subsection~\ref{subsec:existence},
we introduce a time-discrete scheme that leads to 
the existence of energy-variational solutions as claimed in 
Theorem \ref{thm:main}.

\subsection{General properties}

Let us begin with some continuity properties of 
energy-variational solutions, 
which follow directly from  Definition~\ref{def:envar}.

\begin{proposition}\label{prop:reg}
Let $(\f U, E)$ be an energy-variational solution in the sense of Definition~\ref{def:envar}. 
Then $\f U$ and $E$ can be redefined on a subset of $[0,T]$ of measure zero 
such that $E$ is a non-increasing function 
and such that $ \f U \in \C_{w^*}([0,T];\Y^*)$
with $\f U(0)=\f U_0$ in $\Y^\ast$.
Then inequality~\eqref{envarform} is fulfilled everywhere in $[0,T]$ in the sense that
for all $\Phi \in \C^1(  [0,T]; \Y )$ it holds
\begin{equation}
\left[  E - \langle \f U ,  \Phi  \rangle\right ] \Big|_{s-}^{t+}  + \int_s^t \int_{\T}\f U \cdot  \t \Phi  + \f F (\f U ) : \nabla \Phi+ \mathcal{K}(\Phi) \left [\E (\f U) - E \right ]  \de \f x \de \tau \leq 0 \, \label{envarform2}
\end{equation}
for all $s\leq t \in [0,T)$,
where $E(0-)-\langle  \f U(0-), \Phi(0-)  \rangle
\coloneqq E(0+)-\langle  \f U_0, \Phi(0)  \rangle$.
\end{proposition}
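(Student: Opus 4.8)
The plan is to extract the three claims of Proposition~\ref{prop:reg}---monotonicity of $E$, weak$^*$-continuity of $\f U$ with the correct initial value, and the pointwise inequality~\eqref{envarform2}---by reducing the abstract variational inequality~\eqref{envarform} to the scalar situation already handled in Lemma~\ref{lem:invar}. First I would test~\eqref{envarform} with time-independent test functions $\Phi\in\Y\subset\C^1([0,T];\Y)$. For such $\Phi$ the term $\t\Phi$ vanishes and~\eqref{envarform} becomes
\[
\bb{E-\langle\f U,\Phi\rangle}\Big|_s^t+\int_s^t\Bb{\F{\f U}{\Phi}+\mathcal K(\Phi)\bp{\E(\f U)-E}}\de\tau\le 0
\]
for a.a.\ $s<t$ (and $s=0$ with $\f U(0)=\f U_0$). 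This has exactly the form~\eqref{ineq2} of Lemma~\ref{lem:invar} with $g(\tau)=E(\tau)-\langle\f U(\tau),\Phi\rangle$, $g_0=\E(\f U_0)-\langle\f U_0,\Phi\rangle$ (the value needed so that the $s=0$ version matches, using $E(0+)=\E(\f U_0)$ from Theorem~\ref{thm:main}---but for this proposition we only need that \emph{some} $g_0$ works), and $f(\tau)=\F{\f U(\tau)}{\Phi}+\mathcal K(\Phi)(\E(\f U(\tau))-E(\tau))$, which lies in $L^1(0,T)$ since $\f U\in L^\infty(0,T;\domE)$ by~\eqref{BoundF}, since $\E\circ\f U\in L^\infty(0,T)$, and since $E\in\BV\subset L^1(0,T)$. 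Lemma~\ref{lem:invar} then tells us that $g$ is a $\BV$ function with one-sided limits everywhere satisfying~\eqref{ineq.pw}, hence the combination $E-\langle\f U,\Phi\rangle$ can be identified with such a function, for each fixed $\Phi\in\Y$.

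The delicate point is to separate $E$ and $\f U$ from the combination $g=E-\langle\f U,\Phi\rangle$; this is the step I expect to require the most care. Taking $\Phi=0$ gives immediately that $E$ itself is (a.e.\ equal to) a non-increasing $\BV$ function, and I redefine $E$ by its right-continuous representative, so $E(t+)\le E(t-)$ and $E(0+)\le\E(\f U_0)$. Having fixed $E$, for each $\Phi\in\Y$ the function $t\mapsto\langle\f U(t),\Phi\rangle=E(t)-g(t)$ is a difference of two $\BV$ functions and so has one-sided limits at every point; moreover the bound $\snorm{\langle\f U(t),\Phi\rangle}\le\norm{E}_{L^\infty}+\snorm{g(t)}$ together with the uniform bound $\norm{\f U(t)}_{L^1(\T)}\le C$ (from superlinear growth of $\eta$ and $\E(\f U)\le E\le E(0+)$, via Lemma~\ref{lem:delavalle} applied pointwise) shows that $\f U$ is bounded in $L^\infty(0,T;L^1(\T;\R^m))$ and hence in $L^\infty(0,T;\Y^*)$. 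Then I would use the scalar $\BV$-continuity of $t\mapsto\langle\f U(t),\Phi\rangle$ for every $\Phi$ in the \emph{separable} space $\Y\subset\C^1(\T;\R^m)$, plus the uniform bound in $\Y^*$, to run a standard Arzel\`a--Ascoli/density argument: first redefine $\f U$ on the common null set of discontinuity parameters so that $t\mapsto\langle\f U(t),\Phi\rangle$ is right-continuous for a countable dense set of $\Phi$, then use the equi-boundedness in $\Y^*$ to upgrade this to genuine weak$^*$-continuity $\f U\in\C_{w^*}([0,T];\Y^*)$, with $\f U(0)$ well defined; the value $\f U(0)=\f U_0$ in $\Y^*$ follows because the $s=0$ instance of Lemma~\ref{lem:invar}'s conclusion~\eqref{ineq.pw} forces $g(0+)\le g_0=\E(\f U_0)-\langle\f U_0,\Phi\rangle$ while the time-reversed use of the inequality (or testing near $t=0$) pins down the limit, so that combined with $E(0+)=\E(\f U_0)$ we get $\langle\f U(0+),\Phi\rangle=\langle\f U_0,\Phi\rangle$ for all $\Phi\in\Y$.

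Finally, to obtain~\eqref{envarform2} in its everywhere-pointwise form with the one-sided limits $\big|_{s-}^{t+}$ and with a general time-dependent $\Phi\in\C^1([0,T];\Y)$, I would reverse the reduction: fix such a $\Phi$, set $g(\tau)=E(\tau)-\langle\f U(\tau),\Phi(\tau)\rangle$ and $f(\tau)=\int_\T\f U\cdot\t\Phi+\f F(\f U):\nabla\Phi\,\dx+\mathcal K(\Phi(\tau))(\E(\f U)-E)$, observe that~\eqref{envarform} is precisely the a.e.\ statement~\eqref{ineq2} for this $g,f$ (now $f\in L^1(0,T)$ because $\Phi,\t\Phi,\nabla\Phi$ are uniformly bounded on $\T\times[0,T]$ and $\mathcal K$ is continuous hence bounded on the compact curve $\Phi([0,T])\subset\Y$), and apply the last part of Lemma~\ref{lem:invar} to conclude~\eqref{ineq.pw}, i.e.\ $g(t+)-g(s-)+\int_s^t f\,\de\tau\le 0$ for all $s\le t$. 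Since $E$ is already right-continuous and non-increasing and $\f U\in\C_{w^*}([0,T];\Y^*)$ makes $\langle\f U(\cdot),\Phi(\cdot)\rangle$ continuous, the one-sided limits of $g$ are $g(t+)=E(t+)-\langle\f U(t),\Phi(t)\rangle$ and $g(s-)=E(s-)-\langle\f U(s),\Phi(s)\rangle$, with the convention $g(0-)=\E(\f U_0)-\langle\f U_0,\Phi(0)\rangle$ exactly as stated; unwinding this is~\eqref{envarform2}. The main obstacle, as noted, is the bookkeeping needed to pass from ``for each fixed $\Phi$, the pairing is $\BV$ in time on a $\Phi$-dependent full-measure set'' to a single representative of $\f U$ lying in $\C_{w^*}([0,T];\Y^*)$; this is handled by separability of $\Y$ and the uniform $\Y^*$-bound coming from the entropy control, and everything else is a routine application of Lemmas~\ref{lem:invar} and~\ref{lem:delavalle}.
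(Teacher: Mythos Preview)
Your reduction to Lemma~\ref{lem:invar} and the handling of the monotonicity of $E$ are fine, and so is the final step that recovers~\eqref{envarform2} from the pointwise form~\eqref{ineq.pw}. The genuine gap is in the middle: showing $\f U\in\C_{w^*}([0,T];\Y^*)$. From Lemma~\ref{lem:invar} you correctly obtain that for each $\Phi\in\Y$ the map $t\mapsto E(t)-\langle\f U(t),\Phi\rangle$ is $\BV$, hence that one-sided limits $\f U(t\pm)$ exist in $\Y^*$ (using, as you say, the uniform $\Y^*$-bound and separability). But a ``standard Arzel\`a--Ascoli/density argument'' cannot by itself upgrade the existence of one-sided limits to continuity; choosing a right-continuous representative still leaves you with possible jumps $\f U(t+)\neq\f U(t-)$, and nothing in your outline excludes them.

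The missing idea is exactly what the paper supplies: take $s=t$ in the pointwise inequality~\eqref{ineq.pw} (which you have already derived) with time-independent $\Phi\in\Y$, to obtain $[E-\langle\f U,\Phi\rangle]\big|_{t-}^{t+}\le 0$, i.e.\ $\bp{E(t+)-E(t-)}+\langle\f U(t-)-\f U(t+),\Phi\rangle\le 0$ for \emph{every} $\Phi$ in the linear space $\Y$. Lemma~\ref{lem:var.affine} then forces $\f U(t+)=\f U(t-)$ in $\Y^*$, which is precisely the continuity statement you need. (The same $s=t=0$ instance, with the stated convention at $0-$, gives $\f U(0+)=\f U_0$ in $\Y^*$ without assuming $E(0+)=\E(\f U_0)$; note that the latter identity is a conclusion of Theorem~\ref{thm:main}, not a hypothesis of this proposition, so you should not invoke it here.)
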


\begin{proof}
Setting $\Phi\equiv 0$ in inequality~\eqref{envarform}, we infer 
that $ E\big|^t_s \leq 0$ for a.e.~$t>s\in(0,T)$. 
Since $E \in \BV$,  all left-sided and right-sided limits exist and $E$ is continuous 
except for countably many points,
so that we can redefine $E$ such that it is non-increasing. 
For any fixed $\Phi \in \C^1(  [0,T]; \Y )$ we further observe that 
\[
\begin{aligned}
\left[E-   \langle \f U ,  \Phi  \rangle\right ] \Big|_{s}^t  \leq{}& - \int_s^t \int_{\T}\f U \cdot \t \Phi  + \f F (\f U ) : \nabla \Phi+  \mathcal{K}(\Phi) \left [\E (\f U)-E  \right ]  \de \f x \de s 
\\
\leq {}& \int_s^t \Bb{\int_{\T}  \eta(\f U)+\eta^*(\t \Phi)  \de \f x + \mathcal{K}(\Phi) E }\de \tau 
 \,
\end{aligned}
\]
for a.e.~$t>s \in (0,T)$, where we used 
the Fenchel--Young inequality
and the non-negativity of the function in~\eqref{ass:convex}. 
This implies that $t \mapsto E(t)-\langle  \f U(t),\Phi(t) \rangle \in \BV$. 
In particular, left-sided and right-sided limits of this function exist,
and passing to those limits in \eqref{envarform} yields \eqref{envarform2}.
Choosing now $s=t$ and $\Phi\in\Y$ independent of time, we infer that 
\[ 
\left[  E - \langle  \f U , \Phi  \rangle\right ] \Big|_{t-}^{t+}  \leq 0 \quad \text{for all }t\in(0,T) \text{ and } \Phi \in \Y\,.
\]
Lemma \ref{lem:var.affine} now yields $\f U(t+) = \f U(t-)$ in $\mathbb{Y}^\ast$
 for all $t\in (0,T)$ \textit{i.e.,}
we can redefine $\f U$ on a set of measure $0$ such that
$ \f U \in \C_{w^*}([0,T];\Y^*)$. 
\end{proof}

\begin{proposition}
\label{prop:betterreg}
Assume that for two elements $ \f V $, $\f W \in \domE $ with $\langle \f V - \f W , \Phi \rangle
 = 0 $ for all $ \Phi \in \Y$  it holds $ \f V = \f W$. 
 Then we have $\f U \in \C_{w}([0,T]; L^1(\T;\R^m))$. Furthermore, if   $\E(\f U_0)=E(0)$,  the initial value is  attained in the strong sense in $L^1(\T;\R^m)$. 
\end{proposition}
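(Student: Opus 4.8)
Throughout I use Proposition~\ref{prop:reg} to assume that $E$ is non-increasing, that $\f U\in\C_{w^*}([0,T];\Y^\ast)$, and that $\f U(0)=\f U_0$ in $\Y^\ast$; since $E\in\BV$ is non-increasing, $C_0:=E(0)=\sup_{[0,T]}E<\infty$ and $\E(\f U)\le E\le C_0$ a.e. The first step is to identify the $\Y^\ast$-valued path $\f U$ with an $L^1(\T;\R^m)$-valued one. Fix $t\in[0,T]$ and pick $t_n\to t$ from the full-measure set on which $\f U(\cdot)\in\domE$ and $\E(\f U(\cdot))\le E(\cdot)$ hold (so in particular $\E(\f U(t_n))\le C_0$). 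By the compactness statement in Remark~\ref{rem:domE} (a consequence of Lemma~\ref{lem:delavalle}), a subsequence of $(\f U(t_n))$ converges weakly in $L^1(\T;\R^m)$ to some $\f V\in\domE$; since weak $L^1$-convergence implies weak$^\ast$ convergence in $\Y^\ast$ and $\f U(t_n)\xrightharpoonup{\ast}\f U(t)$ in $\Y^\ast$, we obtain $\langle\f V,\Phi\rangle=\langle\f U(t),\Phi\rangle$ for all $\Phi\in\Y$. By the non-degeneracy hypothesis such $\f V$ is unique, so we identify $\f U(t)$ with $\f V\in\domE\subset L^1(\T;\R^m)$. Choosing $t_n\downarrow t$ for $t<T$, weak lower semicontinuity of $\E$ moreover yields $\E(\f U(t))\le\liminf_n E(t_n)=E(t+)\le E(t)$.

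For the weak continuity in $L^1$, let $t_n\to t_0$ in $[0,T]$. Then $\f U(t_n)\xrightharpoonup{\ast}\f U(t_0)$ in $\Y^\ast$ by Proposition~\ref{prop:reg}, and $\E(\f U(t_n))\le C_0$, so Lemma~\ref{lem:delavalle} makes $(\f U(t_n))$ relatively weakly compact in $L^1(\T;\R^m)$. Any weak $L^1$-limit of a subsequence lies in $\domE$ by Remark~\ref{rem:domE} and agrees with $\f U(t_0)$ in $\Y^\ast$, hence equals $\f U(t_0)$ by the non-degeneracy hypothesis. Since every subsequence thus has a further subsequence converging weakly in $L^1$ to the same limit $\f U(t_0)$, the full sequence converges, i.e.\ $\f U\in\C_w([0,T];L^1(\T;\R^m))$.

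Finally, assume $\E(\f U_0)=E(0)$. Since $\f U(0)$ and $\f U_0$ both lie in $\domE$ and agree in $\Y^\ast$, the non-degeneracy hypothesis gives $\f U(0)=\f U_0$ in $L^1(\T;\R^m)$, and hence $\f U(t)\rightharpoonup\f U_0$ in $L^1(\T;\R^m)$ as $t\to0^+$ by the weak continuity above. Moreover $\limsup_{t\to0^+}\E(\f U(t))\le\limsup_{t\to0^+}E(t)=E(0+)\le E(0)=\E(\f U_0)$, while weak lower semicontinuity of $\E$ gives $\liminf_{t\to0^+}\E(\f U(t))\ge\E(\f U_0)$, so $\E(\f U(t))\to\E(\f U_0)$. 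As $\eta$ is strictly convex with superlinear growth, the family $(\f U(t))$ is equi-integrable, and the classical fact that for a strictly convex integrand weak $L^1$-convergence together with convergence of the corresponding integral functional forces strong $L^1$-convergence (Visintin's theorem) yields $\f U(t)\to\f U_0$ in $L^1(\T;\R^m)$. The two delicate points are this last weak-to-strong improvement, which genuinely uses strict (not merely plain) convexity of $\eta$, and the passage from the merely weak$^\ast$ $\Y^\ast$-valued path to an $L^1$-valued one, where both the non-degeneracy hypothesis of the proposition and the de la Vall\'ee Poussin-type compactness of Lemma~\ref{lem:delavalle} are essential.
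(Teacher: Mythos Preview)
Your proof is correct and follows essentially the same approach as the paper: de la Vall\'ee Poussin compactness (Lemma~\ref{lem:delavalle}) together with the weak$^\ast$ continuity in $\Y^\ast$ from Proposition~\ref{prop:reg} and the non-degeneracy hypothesis to get weak $L^1$-continuity, and then weak convergence plus convergence of the strictly convex energy to upgrade to strong $L^1$-convergence at $t=0$. Your presentation is in fact slightly more careful than the paper's in two places: you explicitly draw the approximating times $t_n$ from the full-measure set where $\E(\f U)\le E$ holds, and you separate the identification of the $\Y^\ast$-valued path with an $L^1$-valued one from the subsequent weak continuity argument. For the final step the paper invokes \cite[Thm.~10.20]{singular}, which is the same weak-to-strong principle you attribute to Visintin.
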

\begin{proof}
Let $t\in [0,T]$ and consider a sequence $ \{ t_n\}_{n\in\N} \subset [0,T]$ with $t_n \ra t$.
Then
$ \E(\f U(t_n)) \leq E(t_n)\leq E_0$ for $n\in\N$,
and from \eqref{eq:superlin.growth}
and Lemma~\ref{lem:delavalle} we infer that the set $ \{ \f U(t_n)\}_{n\in\N} $ is relatively weakly compact in $L^1(\T;\R^m)$.
Hence, we may extract a subsequence such that 
\begin{equation*}
 \f U(t_{n_k})\rightharpoonup \f A _t \quad\text{in }L^1(\T;\R^m)\,
\end{equation*}
for some $\f A_t\in \domE $.
As shown above, we also have
$$ \f U(t_{n_k}) \stackrel{*}{\rightharpoonup} \f U(t)  \quad \text{in }\Y^*\,.$$  
We infer that $ \langle \f U (t) , \Phi \rangle = \langle \f A_t , \Phi \rangle $ for all $ \Phi \in \Y$.
The assumption implies
 $\f U(t)=\f A_t$.  
Due to the uniqueness of the weak limit, all subsequences converge to this limit, 
so that $ \f U \in \C_w([0,T];L^1(\T;\R^m))$. 

Moreover, if   $\E(\f U_0)=E(0)$,   we infer 
\[ 
E(0) \geq \lim _{t\searrow 0 } E( t ) \geq \lim_{t \searrow 0} \E(\f U(t)) \geq \E(\f U_0) = E(0) \,
\]
due to the monotonicity of the function $E$ and the weak lower semi-continuity of $\E$.
We conclude that $ \E(\f U(t))  \ra \E(\f U_0) $ as $t \ra 0$. 
Since we also have $\f U (t) \rightharpoonup  \f U_0$,
from the strict convexity of $\E$, we infer that $\f U (t) \to   \f U_0$ strongly in $L^1(\T ; \R^m) $  by~\cite[Thm.~10.20]{singular}. 
\end{proof}

\begin{remark}[Semi-flow property]
We note that energy-variational solutions fulfill the semi-flow property.
This means that 
the restriction of a solution to a smaller time interval
as well as the concatenation
of two solutions $(\f U_1, E_1)$ and $(\f U_2, E_2)$
on subsequent time intervals $(t_0,t_1)$ and $(t_1,t_2)$ 
with $(\f U_1(t_1-), E_1(t_1-))=(\f U_2(t_1+), E_2(t_1+))$.
is again a solution.
This follows from Proposition~\ref{prop:reg} due to inequality~\eqref{envarform2} 
for all $t\geq s\in[0,T]$
and the weak$^\ast$ continuity of the solution.
\end{remark}

\begin{proposition}[Solution set]
The set of all energy-variational solutions with common initial value 
$\f U_0\in\mathbb{V}$ is convex.
Moreover, let $\E( \f U_0)\leq B$ for some $B>0$, and
let $\mathcal S$ be the set of all energy-variational solutions $\np{\f U, E}$
with initial value $\f U_0\in\domE$ and $E(0)\leq B$.
Then $\mathcal S$ is compact in  $L^\infty(0,T;L^1(\T)) \times \BV$
with respect to the weak$^*$ topology in $\BV$ and 
the weak$(^\ast)$ topology in $L^\infty(0,T;L^1(\T))$
defined in \eqref{eq:weakconv.LinfL1}.
\end{proposition}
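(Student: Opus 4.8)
\emph{Convexity.} This part is elementary. For two energy-variational solutions $(\f U_1,E_1)$, $(\f U_2,E_2)$ with common initial value $\f U_0$ and $\lambda\in[0,1]$, I would check that $(\f U_\lambda,E_\lambda):=\np{\lambda\f U_1+(1-\lambda)\f U_2,\,\lambda E_1+(1-\lambda)E_2}$ is again a solution. Since $\domE$ is convex by Hypothesis~\ref{hypo} and $\BV$ is linear, $(\f U_\lambda,E_\lambda)\in L^\infty(0,T;\domE)\times\BV$, and convexity of $\E$ gives $\E(\f U_\lambda)\le\lambda\E(\f U_1)+(1-\lambda)\E(\f U_2)\le E_\lambda$ a.e. The left-hand side of~\eqref{envarform} is a sum of a term that is affine-linear in $(\f U,E)$ and the term $\int_s^t\bb{\F{\f U}{\Phi}+\K(\Phi)\E(\f U)}\de\tau$, which is convex in $\f U$ by~\eqref{ass:convex}; hence its value at $(\f U_\lambda,E_\lambda)$ is dominated by $\lambda$ times its value at $(\f U_1,E_1)$ plus $(1-\lambda)$ times that at $(\f U_2,E_2)$, both non-positive for a.a.\ $s<t$ (including $s=0$, with the common datum $\f U_0$), which yields~\eqref{envarform} for $(\f U_\lambda,E_\lambda)$.

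\emph{A priori bounds and extraction of a limit.} For $(\f U,E)\in\mathcal S$, Proposition~\ref{prop:reg} lets me assume $E$ non-increasing, so $0\le\E(\f U(\tau))\le E(\tau)\le E(0)\le B$ a.e.\ forces $\|E\|_{\BV}\le B(1+T)$; superlinear growth of $\eta$ gives $\eta(\f y)\ge|\f y|-C$, hence $\|\f U\|_{L^\infty(0,T;L^1(\T))}\le B+C|\T|=:C_B$; and $\int_0^T\int_\T\eta(\f U)\de\f x\de\tau\le BT$, so by Lemma~\ref{lem:delavalle} applied on the finite measure space $(0,T)\times\T$ the family $\mathcal S$ is equi-integrable and hence relatively weakly compact in $L^1((0,T)\times\T;\R^m)$. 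Given $(\f U_n,E_n)\subset\mathcal S$, I would pass to a subsequence with $E_n\xrightharpoonup{\ast}E$ in $\BV$, then, by Helly's selection theorem applied to the non-increasing uniformly bounded $E_n$, to a further subsequence with $E_n(\tau)\to E(\tau)$ for \emph{every} $\tau$ (so $E$ is non-increasing with $0\le E\le B$ and $E(0)\le B$), and with $\f U_n\rightharpoonup\f U$ in $L^1((0,T)\times\T;\R^m)$. Using the uniform bound $\|\f U_n\|_{L^\infty(0,T;L^1(\T))}\le C_B$ and splitting any $f\in L^1(0,T;L^\infty(\T;\R^m))$ into the part where $\|f(\tau)\|_{L^\infty(\T)}\le M$, which lies in $L^\infty((0,T)\times\T;\R^m)$, plus a remainder uniformly small in $L^1(0,T;L^\infty(\T))$, I would upgrade this to $\f U_n\xrightharpoonup{(\ast)}\f U$ in $L^\infty(0,T;L^1(\T))$ in the sense of~\eqref{eq:weakconv.LinfL1}.

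\emph{Identification of the limit and passage to the limit in~\eqref{envarform}.} To see $\f U\in L^\infty(0,T;\domE)$ with $\E(\f U(\tau))\le B$ a.e., I would use that $\mathcal C_B:=\setc{\f V\in L^1((0,T)\times\T;\R^m)}{\f V(\tau)\in\domE\text{ and }\E(\f V(\tau))\le B\text{ for a.a.\ }\tau}$ is convex and strongly $L^1$-closed — along a strongly convergent subsequence $\f V_n(\tau)\to\f V(\tau)$ a.e.\ in $\T$ for a.a.\ $\tau$, which by equi-integrability and Vitali's theorem is $L^1(\T)$-convergence, whence $\E(\f V(\tau))\le B$ by lower semicontinuity and $\f V(\tau)\in\domE$ by the closedness property of $\domE$ under bounded-entropy limits noted in Remark~\ref{rem:domE} — hence weakly closed, and $\f U_n\in\mathcal C_B$ gives $\f U\in\mathcal C_B$. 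The bound $\E(\f U)\le E$ a.e.\ I would get from the weak $L^1$-lower semicontinuity of $\f V\mapsto\int_a^b\int_\T\eta(\f V)\de\f x\de\tau$ together with $E_n\to E$ in $L^1(0,T)$ on every subinterval $(a,b)\subset(0,T)$, followed by Lebesgue differentiation. For the inequality itself I would pass to the limit in the equivalent test-function form of~\eqref{envarform} from Lemma~\ref{lem:invar} (testing with $0\le\phi\in\C^1_c([0,T))$): the terms linear in $\f U_n$ converge by weak $L^1((0,T)\times\T)$-convergence against the bounded kernels $\phi'\Phi$ and $\phi\,\t\Phi$; the terms in $E_n$ by $E_n\to E$ in $L^1(0,T)$; the initial term by Helly's pointwise convergence, $\f U_0$ being fixed; and the combined flux term $\int_0^T\phi\bp{\F{\f U_n}{\Phi}+\K(\Phi)\E(\f U_n)}\de\tau$ is, by~\eqref{ass:convex} and $\phi\ge0$, a convex and strongly $L^1$-lower semicontinuous, hence weakly sequentially lower semicontinuous, functional of $\f U_n$. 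Collecting these limits gives the test-function inequality for $(\f U,E)$, and Lemma~\ref{lem:invar} returns~\eqref{envarform}, so $(\f U,E)$ is an energy-variational solution with initial value $\f U_0$ and $E(0)\le B$, i.e.\ $(\f U,E)\in\mathcal S$.

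\emph{Main obstacle.} I expect the delicate point to be the compactness of $\{\f U_n\}$ in the nonstandard space $L^\infty(0,T;L^1(\T))$: since $L^1(\T)$ is not a dual space, boundedness alone is useless, and one genuinely needs the uniform entropy bound to obtain space-time equi-integrability and thereby weak $L^1$-compactness, the separate $L^\infty(0,T;L^1)$ bound then powering the upgrade to weak$(^\ast)$ convergence against $L^1(0,T;L^\infty(\T))$. The second, more structural, hurdle is the limit in the nonlinear flux, which works only because $\f F(\f U)$ enters~\eqref{envarform} solely through the convex combination singled out in~\eqref{ass:convex} — precisely the design feature responsible for weak$(^\ast)$ closedness of the solution set.
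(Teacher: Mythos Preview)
Your proof is correct and follows essentially the same approach as the paper: convexity from the convex structure of~\eqref{envarform}, compactness via the entropy bound together with de la Vall\'ee Poussin and Helly, and passage to the limit in the weak form from Lemma~\ref{lem:invar} using the weak lower semicontinuity granted by~\eqref{ass:convex}. Your treatment of the weak$(^\ast)$ compactness step in $L^\infty(0,T;L^1(\T))$ --- first obtaining weak $L^1((0,T)\times\T)$ compactness and then upgrading against $L^1(0,T;L^\infty(\T))$ via the uniform $L^\infty(0,T;L^1)$ bound and truncation --- is in fact more explicit than the paper's own proof, which simply asserts this convergence as a consequence of Lemma~\ref{lem:delavalle}.
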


\begin{proof}
Using the convexity of $\E$ and of the mapping from \eqref{ass:convex},
one readily sees that 
all terms involving $(\f U, E)$ appear in a convex way
in \eqref{envarform}. 
Therefore, the convex combination of two energy-variational solutions
with coincident initial value
is again an energy-variational solution with the same initial value.

Now consider the set $\mathcal S$. 
By Proposition \ref{prop:reg}, 
we may assume that for all $(\f U,E)\in\mathcal S$ the function $E$ is non-increasing, 
which implies that $ | E | _{\text{TV}([0,T])} \leq B$. 
Due to the inequality $\E(\f U(t)) \leq E(t)$ for a.a.~$t\in [0,T]$ 
and
the superlinear growth of $\eta$,
we infer from Lemma~\ref{lem:delavalle} 
and Helly's selection theorem (cf.~\cite[Thm.~1.126]{barbu}) that 
any sequence in $\mathcal S$ contains a subsequence 
$\{ (\f U^n , E^n)\}_{n\in \N}$
 such that
\begin{equation}
\begin{aligned}
{\f U}^n &\xrightharpoonup{(\ast)} \f U &&\text{in } L^\infty (0,T; L^1 (\T))\,, \\
{E}^n &\xrightharpoonup{\phantom{(}\ast\phantom{)}} {E} && \text{in }\BV\,,\\
{E}^n(t) &\xrightarrow{\phantom{(\ast)}} {E}(t) && \text{for all }t\in[0,T]\,.
\label{weakconv}
\end{aligned}
\end{equation}
For the initial values, we may further extract a subsequence such that $E^n(0+) \ra E_0$ 
for some $E_0\leq B$, and we have
$\f U^n(0) = \f U_0$ for all $n\in \N$. 
Using Lemma~\ref{lem:invar}, 
we may rewrite the energy-variational inequality \eqref{envarform}
in its weak form
\begin{align*}
- \int_0^T \phi' \left [ E ^n - \langle \f U^n, \Phi \rangle \right ]\de t &- \phi(0) \left [ E ^n (0+)- \langle  \f U_0 ,\Phi(0)\rangle \right ]\\& + \int_0^T \phi \left [\F{\f U^n}{\Phi} + \mathcal{K}(\Phi )\left [\E(\f U^n) - E^n \right ] \right ] \de t\leq 0 
\end{align*}
for all $\phi \in \C_c^1( [0,T)) $ with $\phi \geq 0$ and for all $\Phi \in \C^1(  [0,T]; \Y )$. 
Via the convergences~\eqref{weakconv}, we may pass to the limit in this formulation 
and obtain, again by Lemma~\ref{lem:invar}, the formulation~\eqref{envarform}. 
Moreover, the weak lower semi-continuity of $\E$ allows to deduce that $ E (t) \geq \E(\f U(t))$ for a.e.~$t\in(0,T)$.
Consequently, $\np{\f U, E}$ is an energy-variational solution in $\mathcal S$.
\end{proof}

\begin{proposition}\label{prop:equality}
 Let $(\f U, E) \in L^\infty(0,T;\domE) \times \BV$ be an energy-variational solution in the sense of Definition~\ref{def:envar}, and let the regularity weight $\mathcal{K}$ be homogeneous of degree one, \textit{i.e.,} $ \mathcal{K}(\alpha \Phi ) = \alpha \mathcal{K}(\Phi)$ for all $\alpha \in [0,\infty)$ and $\Phi \in \Y$. 
 Then the inequality~\eqref{envarform} is equivalent to the two inequalities 
 \begin{equation}\label{twoineq}
 E\Big|_s^t \leq 0, \qquad
 - \langle  \f U , \Phi  \rangle \Big|_{s}^t  + \int_s^t \!\!\int_{\T}\f U \cdot \t \Phi  + \f F (\f U ) : \nabla \Phi\de \f x + \mathcal{K}(\Phi) \left [\E (\f U) - E \right ]  \de \tau \leq 0
 \end{equation}
 for a.a.~$s,t\in (0,T)$, $s<t$, and for all $\Phi \in \C^1(  [0,T]; \Y )$.
\end{proposition}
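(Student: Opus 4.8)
My argument will rely on an elementary fact about inequalities that are affine in a parameter: if $a+\alpha b\le0$ for every $\alpha\in[0,\infty)$, then $a\le0$ (take $\alpha=0$) and $b\le0$ (divide by $\alpha$ and let $\alpha\to\infty$), whereas conversely $a\le0$ together with $b\le0$ yields $a+b\le0$. I combine this with the structural observation that, apart from the term $E$ itself, the test function enters \eqref{envarform} only through $\langle\f U,\Phi\rangle$, $\int_\T\f U\cdot\t\Phi\,\dx$ and $\F{\f U}{\Phi}$, which are linear in $\Phi$, and through $\mathcal K(\Phi)$, which is positively $1$-homogeneous by hypothesis. Hence, inserting $\alpha\Phi$ for $\alpha\in[0,\infty)$ — admissible since $\Y$, and thus $\C^1([0,T];\Y)$, is a linear space — transforms \eqref{envarform} into $E\big|_s^t+\alpha\,\mathcal B_\Phi(s,t)\le0$, where $\mathcal B_\Phi(s,t)$ is exactly the left-hand side of the second inequality in \eqref{twoineq}.

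To handle the ``for a.a.\ $s,t$'' quantifier and the initial time $s=0$ without fuss, I would first pass to weak formulations by means of Lemma~\ref{lem:invar}. For a fixed $\Phi\in\C^1([0,T];\Y)$, applying that lemma with $g=E-\langle\f U,\Phi\rangle\in L^\infty(0,T)$, with $f$ the time integrand in \eqref{envarform} — which lies in $L^1(0,T)$ by \eqref{BoundF}, the bound $\E(\f U)\le E$ and $E\in\BV$ — and with $g_0=E(0+)-\langle\f U_0,\Phi(0)\rangle$, shows that \eqref{envarform} is equivalent to the weak inequality
\[
-\int_0^T\!\phi'\big[E-\langle\f U,\Phi\rangle\big]\de t-\phi(0)\big[E(0+)-\langle\f U_0,\Phi(0)\rangle\big]+\int_0^T\!\phi\Big[\F{\f U}{\Phi}+\mathcal K(\Phi)\big(\E(\f U)-E\big)\Big]\de t\le0
\]
required for all $\Phi\in\C^1([0,T];\Y)$ and all $0\le\phi\in\C^1_c([0,T))$. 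By the homogeneity of $\mathcal K$ this splits as $A(\phi)+\mathcal B(\phi,\Phi)\le0$ with $A(\phi)=-\int_0^T\phi'E\de t-\phi(0)E(0+)$ independent of $\Phi$ and $\mathcal B(\phi,\cdot)$ positively $1$-homogeneous; moreover, Lemma~\ref{lem:invar} identifies ``$A(\phi)\le0$ for all $\phi$'' with the first inequality in \eqref{twoineq} and ``$\mathcal B(\phi,\Phi)\le0$ for all $\phi$ and $\Phi$'' with the second.

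The two implications then follow at once. For \eqref{envarform}$\,\Rightarrow\,$\eqref{twoineq} I substitute $\alpha\Phi$, $\alpha\ge0$, into the weak inequality and obtain $A(\phi)+\alpha\,\mathcal B(\phi,\Phi)\le0$ for all $\alpha\ge0$; the choice $\alpha=0$ gives $A(\phi)\le0$, and letting $\alpha\to\infty$ gives $\mathcal B(\phi,\Phi)\le0$, hence both inequalities of \eqref{twoineq}. For \eqref{twoineq}$\,\Rightarrow\,$\eqref{envarform}, where the homogeneity plays no role, I simply add ``$A(\phi)\le0$'' and ``$\mathcal B(\phi,\Phi)\le0$'' to recover the weak form of \eqref{envarform}; the endpoint $s=0$ of \eqref{envarform} is matched by the regularity recorded in Proposition~\ref{prop:reg}, namely $\f U\in\C_{w^\ast}([0,T];\Y^\ast)$ with $\f U(0)=\f U_0$ and $E$ non-increasing. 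I do not expect a genuine obstacle in this proposition; the only points that require a little attention are the integrability of $f$, which makes Lemma~\ref{lem:invar} applicable, and the bookkeeping of the initial time, both of which are absorbed into the $\phi(0)g_0$ term of the weak formulation with $g_0=E(0+)-\langle\f U_0,\Phi(0)\rangle$.
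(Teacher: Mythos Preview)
Your proof is correct and rests on exactly the same idea as the paper's: substitute $\alpha\Phi$ into \eqref{envarform}, set $\alpha=0$ to get $E\big|_s^t\le0$, then divide by $\alpha$ and let $\alpha\to\infty$ to isolate the second inequality; conversely, add the two inequalities. The only difference is that you route the argument through the weak formulation of Lemma~\ref{lem:invar} to manage the ``for a.a.\ $s,t$'' quantifier, whereas the paper works directly with \eqref{envarform} (dividing by $\alpha$ and passing to the limit along a countable sequence $\alpha_n\to\infty$ already keeps the exceptional set null, so the detour is not needed).
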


\begin{proof}
Summation of the two inequalities in~\eqref{twoineq} directly gives the inequality~\eqref{envarform}.
For the converse direction, the first inequality in~\eqref{twoineq} can be deduced from~\eqref{envarform} by choosing $\Phi \equiv 0$. 
In order to infer the second inequality in~\eqref{twoineq}, we choose $\Phi = \alpha \Psi$ in~\eqref{envarform} for $\alpha >0$ and $\Psi \in \C^1([0,T];\Y)$. Multiplying the resulting inequality by $\frac{1}{\alpha}$ implies 
\begin{align*}
\left[ \frac{1}{\alpha}  E - \langle \f U ,\Psi  \rangle\right ] \Big|_{s}^t  + \int_s^t \int_{\T}\f U \cdot \t \Psi  + \f F (\f U ) : \nabla \Psi\de \f x + \mathcal{K}(\Psi) \left [\E (\f U) - E \right ]  \de \tau \leq 0\,.
\end{align*}
Passing to the limit $\alpha \ra \infty$, we infer the second inequality in~\eqref{twoineq}. 
\end{proof}

\subsection{Relative entropy and weak-strong uniqueness}

In order to derive a relative entropy inequality
for energy-variational solutions, we make the following assumptions
on higher regularity of $\eta$ and $\f F$ in the interior of 
the domain of $\eta$.

\begin{hypothesis}\label{hypo:smooth}
Let the assumptions of Hypothesis~\ref{hypo} be fulfilled.
Set $\setdom := \interi \dom \eta $ and assume that $ \eta\big|_{\setdom} \in \C^2(\setdom; \R)$ such that $ D^2 \eta (\f z ) $ is positive definite for all $\f z \in M$, 
and that $\f F\big|_{\setdom} \in \C^1(\setdom  ; \R^{m \times d} )$ such that there exists a $\tilde{\f q} \in \C^1(M;\R^d)$ fulfilling~\eqref{eq:entropyflux.new}.
\end{hypothesis}

Under these regularity assumptions, 
we can introduce the relative total entropy functional $\mathcal{R}: \domE\times 
\C^1(\T; M)\ra \R$, which is given by 
\begin{subequations}\label{RW}
\begin{equation}
\mathcal{R}(\f U| \tU) := \E(\f U) - \E(\tU) -\langle D\E(\tU),\f U- \tU\rangle \,.\label{R}
\end{equation}
Additionally, we define the relative form 
$\mathcal{W}:\domE\times 
\C^1(\T; M)\ra \R $ via
\begin{equation}
\mathcal{W}(\f U | \tU)  \!=\! \int_{\T}\! \nabla D \eta (\tU) :\! \left ( \f F( \f U) - \f F(\tU) - D \f F(\tU) ( \f U {-} \tU ) \right ) \!\de \f x + \mathcal{K}(\tU ) \mathcal R(\f U | \tU)  \,. \label{W}
\end{equation}
\end{subequations}
We note that the assumption $\tU \in  \C^1(\T ; M)$  implies 
$\tU\in\domE$,
so that $\mathcal R(\f U|\tU)$ is finite.
Indeed, since $\eta$ is continuous in the interior of its domain,
the composition $\eta\circ\tU$ is a continuous function on the compact set $\T$
and thus bounded, 
which yields $\E(\tU)<\infty$.
Similarly, all compositions of functions in \eqref{W} are bounded,
and $\mathcal W$ is well defined.
Moreover, 
both terms $\mathcal R$ and $\mathcal W$ are non-negative
due to the convexity of $\eta$ and of the function from \eqref{ass:convex},
respectively.

\begin{proposition}[Relative entropy inequality]\label{prop:weakstrong}
Let $(\f U, E)$ be an energy-variational solution in the sense of
Definition~\ref{def:envar}, and let Hypothesis~\ref{hypo:smooth} be satisfied.
 Then the relative  entropy inequality 
 \begin{equation}
 \begin{aligned}
&\left [\mathcal{R}(\f U| \tU ) + E - \mathcal E(\f U)\right ]\Big|_s^t- \int_s^t\mathcal{K}(\tU) \left [\mathcal{R}(\f U| \tU ) + E - \mathcal E(\f U)\right ] \de \tau 
\\ 
&\qquad
+ \int_s^t \left [\mathcal{W}(\f U | \tU) +\int_{\T}\left ( \t \tU + \di \f F(\tU) \right ) \cdot D^ 2\eta(\tU)  ( \f U - \tU ) \de \f x
 \right ]\de \tau\leq 0 \,\label{inuniqu}
\end{aligned}
\end{equation}
 holds for a.e.~$s,t\in(0,T)$ and all $\tU \in  \C^1(\T \times [0,T]; M) $.
\end{proposition}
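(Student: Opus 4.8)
The strategy is to plug the specific test function $\Phi = D\eta(\tU)$, for $\tU \in \C^1(\T\times[0,T];M)$, into the energy-variational inequality \eqref{envarform}, and then rewrite every resulting term in terms of the relative quantities $\mathcal R$ and $\mathcal W$. First I would note that since $\tU$ takes values in $M = \interi\dom\eta$ and $\eta\big|_M \in \C^2(M)$ by Hypothesis~\ref{hypo:smooth}, the function $\Phi := D\eta(\tU)$ is a legitimate element of $\C^1([0,T];\Y)$ (here $\Y = \C^1(\T;\R^m)$, or at least $\Phi$ lies in the admissible test space in the relevant examples), so \eqref{envarform} applies. I would then expand the three contributions of \eqref{envarform} separately.

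For the boundary term, using $\Phi = D\eta(\tU)$ and the definition \eqref{R} of $\mathcal R$, one has
\[
E - \langle \f U, D\eta(\tU)\rangle
= \bb{E - \E(\f U)} + \mathcal R(\f U|\tU) + \E(\tU) - \langle D\E(\tU), \tU\rangle,
\]
and since $\E(\tU) - \langle D\E(\tU),\tU\rangle = -\int_\T \eta^\ast(D\eta(\tU))\,\dx$ is the boundary value of an absolutely continuous (indeed $\C^1$) function of time, its contribution over $[s,t]$ can be moved inside the time integral as $-\int_s^t \ddt\!\int_\T \eta^\ast(D\eta(\tU))\,\dx\,\de\tau$ and combined with the interior terms. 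For the term $\mathcal K(\Phi)[\E(\f U)-E]$, I substitute $\Phi = D\eta(\tU)$ and write $\E(\f U) - E = -\mathcal R(\f U|\tU) + [\E(\f U) - \E(\tU) - \langle D\E(\tU),\f U-\tU\rangle] - \mathcal R(\f U|\tU) \cdots$; more precisely $\E(\f U) - E = \mathcal R(\f U|\tU) + \langle D\E(\tU),\f U-\tU\rangle + \E(\tU) - E$, and I reorganize so that $-\mathcal K(\tU)[\mathcal R(\f U|\tU) + E - \E(\f U)]$ appears, with the leftover $\mathcal K(\tU)$ terms feeding into $\mathcal W$ via its definition \eqref{W}.

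The heart of the computation is the flux and time-derivative term $\int_\T \f U\cdot\t\Phi + \f F(\f U):\nabla\Phi\,\dx$ with $\Phi = D\eta(\tU)$. I would add and subtract the "reference" integrand $\int_\T \tU\cdot\t D\eta(\tU) + \f F(\tU):\nabla D\eta(\tU)\,\dx$; by \eqref{eq:integralFentropy} the flux part of this reference term vanishes, and $\int_\T \tU\cdot\t D\eta(\tU)\,\dx = \ddt\!\int_\T\bb{D\eta^\ast(D\eta(\tU))\cdot D\eta(\tU) - \eta^\ast(D\eta(\tU))}\,\dx \cdots$ — actually $\int_\T \tU\cdot\t D\eta(\tU)\,\dx + \int_\T \t\tU\cdot D\eta(\tU)\,\dx = \ddt\E(\tU)$, and using $\t\tU = -\di\f F(\tU) + (\t\tU + \di\f F(\tU))$ together with \eqref{eq:integralFentropy} again identifies $\ddt\E(\tU)$ with $\int_\T(\t\tU + \di\f F(\tU))\cdot D\eta(\tU)\,\dx$. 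The remaining differences $\int_\T (\f U - \tU)\cdot\t D\eta(\tU)\,\dx$ and $\int_\T (\f F(\f U) - \f F(\tU)):\nabla D\eta(\tU)\,\dx$ then get matched against $\mathcal W(\f U|\tU)$ and the term $\int_\T(\t\tU + \di\f F(\tU))\cdot D^2\eta(\tU)(\f U-\tU)\,\dx$: here one uses $\t D\eta(\tU) = D^2\eta(\tU)\t\tU$, $\nabla D\eta(\tU) = D^2\eta(\tU)\nabla\tU$ (chain rule, valid since $\eta\in\C^2(M)$), an integration by parts in $x$ to convert $\int_\T D^2\eta(\tU)\t\tU\cdot(\f U-\tU)\,\dx$, and the Taylor-type rearrangement $\f F(\f U) - \f F(\tU) = D\f F(\tU)(\f U-\tU) + [\f F(\f U) - \f F(\tU) - D\f F(\tU)(\f U-\tU)]$, so that the bracketed remainder is exactly the first term of $\mathcal W$ in \eqref{W}. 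Collecting everything yields \eqref{inuniqu}.

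\textbf{Main obstacle.} The delicate point is bookkeeping and justification of the integrations by parts and chain-rule manipulations: $\f F$ need only be $\C^1$ on $M$ (not globally smooth), $\eta^\ast$ need not be $\C^2$, and $\f U$ is merely $L^\infty(0,T;L^1)$, so all the "reference" manipulations must be carried out purely on $\tU$ (which is $\C^1$ with values in the open set $M$ where everything is smooth), while the $\f U$-dependence enters only linearly or through $\f F(\f U)$, which is $L^1$ by \eqref{BoundF}. One must also be careful that the identity $\ddt\!\int_\T(\cdots)\,\dx$ steps are legitimate — i.e. that the time-boundary terms generated from $\Phi = D\eta(\tU)$ can be absorbed — and that $\mathcal K$ applied to $D\eta(\tU)$ is what is meant by $\mathcal K(\tU)$ in \eqref{W}–\eqref{inuniqu} (a notational convention identifying $\mathcal K(\tU)$ with $\mathcal K(D\eta(\tU))$, which should be stated explicitly). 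Finally, since \eqref{envarform} holds only for a.e.\ $s,t$, the relative entropy inequality \eqref{inuniqu} is likewise obtained for a.e.\ $s,t$, consistent with the statement.
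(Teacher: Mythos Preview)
Your overall strategy --- insert $\Phi = D\eta(\tU)$ into \eqref{envarform} and rearrange into the relative quantities $\mathcal R$ and $\mathcal W$ --- is exactly what the paper does, and the handling of the boundary term and of the $\mathcal K$-term is fine. There is, however, a genuine gap at the ``heart of the computation,'' and it is precisely the step you gloss over with ``an integration by parts in $x$.''

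After the Taylor expansion of $\f F(\f U)$ about $\tU$ and after using \eqref{eq:integralFentropy} to kill $\int_\T \f F(\tU):\nabla D\eta(\tU)\,\dx$, the linear-in-$(\f U-\tU)$ flux contribution is
\[
\int_\T \nabla D\eta(\tU) : \bigl(D\f F(\tU)(\f U-\tU)\bigr)\,\de\f x,
\]
and you need this to equal
\[
\int_\T \di\f F(\tU)\cdot D^2\eta(\tU)(\f U-\tU)\,\de\f x.
\]
This is \emph{not} an integration by parts: $\f U$ is only in $L^\infty(0,T;L^1)$, so you cannot move spatial derivatives onto it, and there is no spatial derivative in the second expression to undo anyway. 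In index form the required pointwise identity is
\[
D^2_{ik}\eta(\tU)\,D_l\f F_{ij}(\tU) \;=\; D^2_{il}\eta(\tU)\,D_k\f F_{ij}(\tU),
\]
i.e.\ the classical symmetry of $D^2\eta\,D\f F$ that characterizes entropy--entropy-flux pairs. The paper obtains it by differentiating the relation \eqref{eq:entropyflux.new} from Hypothesis~\ref{hypo:smooth} with respect to $\f z$, using the inverse function theorem to write $D^2\eta^*(\f z)=[D^2\eta(D\eta^*(\f z))]^{-1}$, and reading off the symmetry from the equality of mixed second partials of $\tilde{\f q}\circ D\eta^*$. Without invoking $\tilde{\f q}$ (or equivalently an entropy-flux structure), the identity fails for generic $\f F$, and your rearrangement cannot be completed. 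So the missing ingredient is precisely the use of the entropy-flux condition in Hypothesis~\ref{hypo:smooth}; once you have this symmetry, the rest of your bookkeeping goes through as you outline. (Your remark about the notation $\mathcal K(\tU)$ standing for $\mathcal K(D\eta(\tU))$ is well taken; the paper tacitly makes this identification.)
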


An immediate consequence of inequality \eqref{inuniqu}
is the following weak-strong uniqueness property.

\begin{corollary}[Weak-strong uniqueness]\label{cor:uni}
Let Hypothesis~\ref{hypo:smooth} be satisfied.
If there exists a strong solution 
$\tU \in \C^1(s,t;\mathbb{Y})\cap\C([s,t);\mathbb{Y})$ 
to \eqref{eq.pde} in some interval $(s,t)\subset[0,T]$,
then $(\tU,\mathcal E(\tU))$ coincides with any energy-variational  solution $( \f U ,E) \in \C_{w^*}(0,T;\Y^*) \times \BV $
in the sense of Definition~\ref{def:envar} 
with $(\f U(s), E(s-))= (\tU(s), \E(\tU(s)))$. 
\end{corollary}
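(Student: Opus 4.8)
The plan is to derive the weak-strong uniqueness statement directly from the relative entropy inequality \eqref{inuniqu} of Proposition~\ref{prop:weakstrong}. First I would take the given strong solution $\tU\in\C^1(s,t;\Y)\cap\C([s,t);\Y)$ and note that, being a classical solution of \eqref{eq.pde}, it satisfies $\t\tU+\di\f F(\tU)=\f 0$ pointwise. Hence the last term in the integrand of \eqref{inuniqu}, namely $\int_\T(\t\tU+\di\f F(\tU))\cdot D^2\eta(\tU)(\f U-\tU)\de\f x$, vanishes identically. Plugging $\tU$ into \eqref{inuniqu} therefore leaves
\[
\bb{\mathcal R(\f U|\tU)+E-\mathcal E(\f U)}\Big|_s^t
-\int_s^t\mathcal K(\tU)\bb{\mathcal R(\f U|\tU)+E-\mathcal E(\f U)}\de\tau
+\int_s^t\mathcal W(\f U|\tU)\de\tau\leq 0.
\]

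Next I would introduce the abbreviation $\mathfrak r(\tau):=\mathcal R(\f U(\tau)|\tU(\tau))+E(\tau)-\mathcal E(\f U(\tau))$, which is non-negative since $\mathcal R\geq 0$ (convexity of $\eta$) and $E\geq\mathcal E(\f U)$ by Definition~\ref{def:envar}. The initial condition $(\f U(s),E(s-))=(\tU(s),\mathcal E(\tU(s)))$ forces $\mathfrak r(s-)=0$: indeed $\mathcal R(\tU(s)|\tU(s))=0$ and $E(s-)=\mathcal E(\tU(s))=\mathcal E(\f U(s))$. Dropping the non-negative term $\int_s^t\mathcal W(\f U|\tU)\de\tau$ (where $\mathcal W\geq 0$ by convexity of the map in \eqref{ass:convex}, as noted after \eqref{W}) and reading the inequality in the pointwise form of Proposition~\ref{prop:reg}, one obtains
\[
\mathfrak r(t)\leq \int_s^t \mathcal K(\tU(\tau))\,\mathfrak r(\tau)\de\tau .
\]
Since $\tU\in\C([s,t);\Y)$ and $\mathcal K$ is continuous on $\Y$, the function $\tau\mapsto\mathcal K(\tU(\tau))$ is bounded on compact subintervals, so Grönwall's lemma gives $\mathfrak r\equiv 0$ on $[s,t)$. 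From $\mathfrak r=0$ and the non-negativity of both summands, we get $E=\mathcal E(\f U)$ and $\mathcal R(\f U|\tU)=0$ a.e.; the latter, together with the strict convexity of $\eta$ (hence of $\mathcal E$), yields $\f U=\tU$ a.e.\ on $[s,t)$, and then $E=\mathcal E(\f U)=\mathcal E(\tU)$.

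The main obstacle is really just the bookkeeping at the initial time: one must make sure $\tU$ is an admissible test function for \eqref{inuniqu} up to $s=s$ (this uses the $\C([s,t);\Y)$ regularity, which in particular makes $\tU\in\C^1(\T\times[s,t);M)$ after possibly shrinking the interval so that $\tU$ stays in $M=\interi\dom\eta$), and that the one-sided limit convention of Proposition~\ref{prop:reg} is used so that the boundary term at $s$ is evaluated as $\mathcal R(\f U(s)|\tU(s))+E(s-)-\mathcal E(\f U(s))$. After that, everything reduces to the standard Grönwall argument. I would also remark that the weak$^*$ continuity $\f U\in\C_{w^*}(0,T;\Y^*)$ assumed in the corollary ensures the identification $\f U(t)=\tU(t)$ holds for every $t\in[s,t)$, not merely almost everywhere, once combined with $\f U=\tU$ a.e.\ and the continuity of $\tU$.
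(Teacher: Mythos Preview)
Your proposal is correct and follows essentially the same route as the paper: use Proposition~\ref{prop:weakstrong} with the strong solution $\tU$ so that the $\t\tU+\di\f F(\tU)$ term vanishes, observe that the initial condition annihilates the boundary term at $s$, drop the non-negative $\mathcal W$ contribution, and apply Gr\"onwall to the quantity $\mathcal R(\f U|\tU)+E-\E(\f U)$, concluding via strict convexity of $\eta$. Your additional remarks on the admissibility of $\tU$ as a test function (taking values in $M$) and on upgrading the a.e.\ identity to a pointwise one via $\f U\in\C_{w^*}([0,T];\Y^*)$ are sensible refinements that the paper leaves implicit.
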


\begin{proof}
Since~$\tU$ is a strong solution on $[s,t]$, 
it holds $ \t \tU + \di \f F(\tU)=0$ in $(s,t)$. 
For any energy-variational solution $(\f U, E)$ such that $\f U(s) = \tU(s) $ and $E(s-) = \E(\tU(s))$, we further observe
\[
\mathcal{R}(\f U(s)| \tU(s))+ E(s-)- \E(\f U(s)) = 0 \,.
\]
From the inequality~\eqref{inuniqu}, we thus infer that
\begin{align*}
\mathcal{R}(\f U(r)| \tU(r) ) + E (r+)- \E(\f U(r))
&+ \int_s^r \mathcal{W}(\f U|\tU) \de \tau \\
&\leq \int_s^r\mathcal{K}(\tU) \left [\mathcal{R}(\f U| \tU ) + E - \E(\f U)\right ] \de \tau 
\end{align*}
for all $r \in [s,t]$.  
The convexity of the function from~\eqref{ass:convex} implies $\mathcal W\geq 0$.
From Gronwall's inequality, we infer 
that $\mathcal{R}(\f U| \tU) + E - \E(\f U)\leq 0$ in $(s,t)$. 
Since $E\geq \E(\f U)$, this implies $\mathcal{R}(\f U| \tU)\leq0$, so that
$\f U= \tU$
due to the strict convexity of $\eta$. 
\end{proof}

\begin{remark}
The above weak-strong uniqueness result is stronger than the usual weak-strong uniqueness results (cf.~\cite{weakstrongeuler}). Usually, these results are stated in the sense that: If there exists a strong solution emanating from the same initial data as the generalized solution, then both solutions coincide as long as the strong one exists. 
The above result also holds in case that the energy-variational solution coincides with a strong solution at some later point $s$ in the evolution. 
However, the solution has to satisfy $E(s-)=\E(\f U(s))$ at such a point in time.

Note that here we do not claim existence of 
such regular solutions.
There are many different results on the existence of classical solutions on short time intervals for conservation laws. We refer to~\cite[Ch.~V]{dafermos2} and the references therein. 
\end{remark}

It remains to show the relative entropy inequality \eqref{inuniqu}.

\begin{proof}[Proof of Proposition~\ref{prop:weakstrong}]
For any smooth function $\tU \in \C^1(\T\times [0,T]; M) $, we observe by the  fundamental theorem of 
calculus and the product rule that
\begin{equation}
\left [ \E(\tU) - \langle D \E(\tU),  \tU\rangle \right ]\Big|_s^t + \int_s^t \int_{\T} D^2\eta (\tU) (\f U-\tU ) \cdot  \t \tU -  \t D\eta(\tU) \cdot \f U \de \f x  \de \tau = 0 \,.\label{weakstrong1}
\end{equation}
Note that $\tU$ only takes values in $M$ such that the following calculations are rigorous. 
Taking the derivative of the assumed 
relation~\eqref{eq:entropyflux.new} with respect to $\f z$, we infer
\begin{align*}
D_l \f F_{ij}( D \eta^*(\f z)) D^2_{lk}\eta^*(\f z) =\frac{\partial}{\partial \f z_k} \f F_{ij}(D \eta^*(\f z)) = D^2_{ki} [ \f q_j \circ D\eta^* ] (\f z )   \,. 
\end{align*}
Note that since $ \f z = D \eta (D \eta^*( \f z))$, we infer from the implicit function theorem that $\eta^*$ is twice continuously differentiable with $ D^2 \eta^*(\f z) = \left [ D^2 \eta (D \eta^*( \f z))\right ]^{-1}$. 
We may express the derivative of $\f F$ via
\[
D_l \f F_{ij}( D \eta^*(\f z))  = D^2_{ki} [ \f q_j \circ D\eta^* ] (\f z )  D^2_{kl}\eta(D\eta^*(\f z))\,. 
\]
 Multiplying the above relation by $ D^2\eta(D \eta^*( \f z))$ from the left, we infer by the symmetry of the second derivatives of $\f q$ and $\eta$ that
\begin{align*}
D^2_{im } \eta(D \eta^*( \f z)) D_l \f F_{ij}( D \eta^*(\f z)) &= D^2_{im } \eta(D \eta^*( \f z))D^2_{ki} [ \f q_j \circ D\eta^* ] (\f z )  D^2_{lk } \eta(D \eta^*( \f z))\\& =D^2_{lk } \eta(D \eta^*( \f z)) D_m \f F_{kj}( D \eta^*(\f z))\,.
\end{align*}
This symmetry can be used to calculate 
\begin{align*}
D^2_{im } \eta( \tU )\frac{\partial }{\partial{\f x_j}} \f F_{ij} ( \tU)  &= D^2_{im } \eta( \tU ) D_l \f F_{ij}(\tU) \frac{\partial \f U_l}{\partial \f x_j}\\& = D^2_{il } \eta( \tU ) D_m \f F_{ij}(\tU) \frac{\partial \f U_l}{\partial \f x_j} = \frac{\partial}{\partial \f x_j } D_i\eta(\tU)  D_m \f F_{ij}(\tU) \,,
\end{align*} 
which implies
\[
\begin{aligned}
 \di \f F (\tU) \cdot D ^2\eta(\tU ) ( \f U-\tU) &=  (D \f F (\tU): \nabla \tU) \cdot   D ^2\eta(\tU) ( \f U-\tU) \\
&=  D \f F (\tU)\dreidots \nabla   D \eta(\tU)\otimes  ( \f U-\tU) \\& =\nabla   D \eta(\tU): \left ( D \f F (\tU) ( \f U-\tU)\right )\,.
\end{aligned}
\]
Additionally, we may set $\Phi :=D \eta (\tU) $ in~\eqref{eq:integralFentropy} in order to conclude from 
the Fenchel equivalences~\eqref{eq:fenchel}  that
\[
\int_{\T} \nabla D \eta (\tU):  \f F(\tU)\de \f x =\int_{\T} \nabla \Phi :  \f F(D \eta^* ( \Phi))\de \f x =0\,.
\]
Combining the last two equations, we find
\begin{align}
0 = \int_{\T} \di \f F (\tU) \cdot D ^2\eta(\tU ) ( \f U-\tU)  \de \f x  - \int_{\T}  \nabla D \eta (\tU): \left [ \f F(\tU)+ D \f F (\tU) ( \f U-\tU)\right ] \de \f x \,.\label{weakstrong2}
\end{align}
Adding the above identities~\eqref{weakstrong1} and~\eqref{weakstrong2} to the inequality~\eqref{envarform} with $\Phi = D \eta(\tU)$ implies
\begin{align*}
&\left [ E - \E(\tU) - \langle D \E (\tU) , \f U - \tU \rangle \right ]\Big|_s^t \\
&\qquad+ \int_s^t \int_{\T} \nabla D \eta(\tU ): \left ( \f F(\f U) -\f F (\tU) - D \f F (\tU) ( \f U-\tU)\right )  \de \f x \de \tau  \\
&\qquad+\int_s^t\int_{\T}\left ( \t \tU + \di \f F(\tU) \right ) \cdot D^ 2\eta(\tU) \left ( \f U - \tU\right )  \de \f x+ \mathcal{K}(\tU) \left [\E (\f U) - E \right ]  \de \tau \leq 0 
\end{align*}
which is \eqref{inuniqu}.
\end{proof}

\subsection{Existence of energy-variational solutions}
\label{subsec:existence}

In this subsection 
we prove Theorem \ref{thm:main}, that is, we show existence
of energy-variational solutions to the hyperbolic conservation law \eqref{eq}.
To do so, we introduce a semi-discretization scheme in time.
For $N\in\N$, we define $\tau :=T/N$, and we set $t^n:= \tau n$ for $n\in \{ 0,\ldots , N\}$
to obtain 
an equidistant partition of $[0,T]$.
We set $\f U^0\coloneqq\f U_0\in\domE$,
and in the $n$-th time step, $n\geq 1$,
we compute $\f U^n$ from $ \f U^{n-1}\in \domE$
by solving the minimization problem
\begin{equation}\label{eq:timedis}
\begin{aligned}
  \f U^n= \argmin _{\f U\in \domE; \E(\f U)\leq \E(\f U^{n-1}) }  \, &\sup _{ \Phi \in \Y
  } \!
\Bigg [
\left (\E(\f U) - \E(\f U^{n-1})\right ) - \left ( \f U - \f U^{n-1}, \Phi \right ) 
\\
&\quad
+ \tau \left [ \F{\f U}{\Phi} + \mathcal{K}(\Phi ) \left( \E(\f U ) - \E(\f U^{n-1})  \right ) \right ] \Bigg] \,.
\end{aligned}
\end{equation}
\begin{remark}[Comparison to time discretization for gradient flows]
In the theory of gradient flows it is nowadays standard to consider a time-discretization scheme based on a sequential minimization~\cite[Chap.~6]{gradient}. 
This is certainly a different setting than in the problem considered here
since the energy is not formally conserved along a gradient flow
but dissipated by some dissipation functional. Nevertheless, 
a similarity is that a saddle-point problem has to be solved in every time step. The current algorithm can thus be seen as a first generalization of this technique from gradient flows to more general systems, also including Hamiltonian dynamics. A goal for the future is to combine both approaches in order to find a suitable discretization scheme for general GENERIC systems~\cite{generic}, which combine dissipative and Hamiltonian effects. 
\end{remark}

\begin{remark}[Solving the min-max problem numerically]
It is worth observing that the discrete optimization problem
from~\eqref{eq:timedis} is given in form of a saddle-point problem. 
This is a standard problem in optimization theory and machine learning and there are different tools to solve such a problem numerically~\cite{numeric}.
\end{remark}

\begin{theorem}[Solution of the time-discrete problem]\label{thm:disex}
For each $\f U^{n-1} \in \domE$ 
there exists a unique solution $ \f U^n$ to the minimization problem~\eqref{eq:timedis},
and it holds 
\begin{align}
 \left (1 +\tau \mathcal{K}(\Phi)\right )  \left ( \mathcal{E}(\f U^n) - \mathcal{E}(\f U^{n-1})\right ) - \langle \f U^n - \f U^{n-1}, \Phi\rangle + \tau \F{\f U^n}{\Phi}
 \leq 0 \label{disineq}
\end{align}
for all $\Phi \in\Y$.
\end{theorem}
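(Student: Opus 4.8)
The plan is to rephrase \eqref{eq:timedis} as the minimization of a strictly convex, weakly lower semicontinuous functional over a weakly compact set and to read off both existence and the inequality \eqref{disineq} from a convex--concave saddle-point structure. Abbreviate
\[
\mathcal{L}(\f U,\Phi):= \bigl(1+\tau\mathcal{K}(\Phi)\bigr)\bigl(\E(\f U)-\E(\f U^{n-1})\bigr)-\langle \f U-\f U^{n-1},\Phi\rangle+\tau\F{\f U}{\Phi},
\]
and put $\mathcal{A}:=\{\f U\in\domE\mid \E(\f U)\le\E(\f U^{n-1})\}$, so that \eqref{eq:timedis} reads $\f U^n=\argmin_{\f U\in\mathcal A}J(\f U)$ with $J(\f U):=\sup_{\Phi\in\Y}\mathcal{L}(\f U,\Phi)$, and \eqref{disineq} is exactly the assertion that $\mathcal{L}(\f U^n,\Phi)\le 0$ for all $\Phi\in\Y$, that is, $J(\f U^n)\le 0$. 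First I would collect the structural facts. The set $\mathcal A$ is convex (by convexity of $\domE$ from Hypothesis~\ref{hypo} and of $\E$), non-empty (it contains $\f U^{n-1}$), and, by de~la~Vall\'ee~Poussin's Lemma~\ref{lem:delavalle}, the weak lower semicontinuity of $\E$, and the weak closedness of $\domE$ under entropy-bounded sequences from Remark~\ref{rem:domE}, it is weakly compact in $L^1(\T;\R^m)$. By \eqref{ass:convex} and the weak lower semicontinuity of $\E$, each $\mathcal{L}(\cdot,\Phi)$ is convex and weakly lower semicontinuous, hence so is $J$, and $J$ is in fact strictly convex because the strictly convex functional $\E$ enters $\mathcal L(\cdot,\Phi)$ with the $\Phi$-independent weight $1$; note also that $\mathcal L$ is real-valued on $\mathcal A\times\Y$ by \eqref{BoundF} and Remark~\ref{rem:integralwelldefined}. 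On the other side, since $\E(\f U)-\E(\f U^{n-1})\le 0$ on $\mathcal A$ and $\mathcal K$ is convex and continuous, each $\mathcal{L}(\f U,\cdot)$ is concave and continuous on $\Y$.

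The heart of the argument is the one-sided bound $\sup_{\Phi\in\Y}\inf_{\f U\in\mathcal A}\mathcal{L}(\f U,\Phi)\le 0$. For fixed $\Phi\in\Y$ I would produce a competitor $\f U_\Phi\in\mathcal A$ with $\mathcal{L}(\f U_\Phi,\Phi)\le 0$ of the form $\f U_\Phi=D\eta^*(\alpha_\Phi\Phi)$: if $\E(D\eta^*(\Phi))\le\E(\f U^{n-1})$ take $\alpha_\Phi=1$, and otherwise Lemma~\ref{lem:surjective} furnishes $\alpha_\Phi\in[0,1)$ with $\E(D\eta^*(\alpha_\Phi\Phi))=\E(\f U^{n-1})$; in both cases $\f U_\Phi\in\domE$ (the constant sequence in \eqref{setD}) and $\E(\f U_\Phi)\le\E(\f U^{n-1})$, so $\f U_\Phi\in\mathcal A$. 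The flux contribution vanishes, $\F{\f U_\Phi}{\Phi}=0$, by \eqref{eq:integralFentropy} applied to the test function $\alpha_\Phi\Phi\in\Y$ (and, when $\alpha_\Phi=0$, because $\int_\T\nabla\Phi\de\f x=0$ on the torus). The remaining terms are handled by the Fenchel equivalences~\eqref{eq:fenchel}, which give $\alpha_\Phi\langle\f U_\Phi,\Phi\rangle=\E(\f U_\Phi)+\int_\T\eta^*(\alpha_\Phi\Phi)\de\f x$, together with the Fenchel--Young inequality $\alpha_\Phi\langle\f U^{n-1},\Phi\rangle\le\E(\f U^{n-1})+\int_\T\eta^*(\alpha_\Phi\Phi)\de\f x$: a short computation then yields $\mathcal{L}(\f U_\Phi,\Phi)\le\tau\mathcal K(\Phi)\bigl(\E(\f U_\Phi)-\E(\f U^{n-1})\bigr)\le0$ when $\alpha_\Phi=1$, and $\mathcal{L}(\f U_\Phi,\Phi)=\langle\f U^{n-1}-\f U_\Phi,\Phi\rangle\le0$ when $\alpha_\Phi<1$ (the degenerate subcase $\E(\f U^{n-1})=0$, which forces $\f U^{n-1}=\f 0=\f U_\Phi$, being trivial). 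Hence $\inf_{\f U\in\mathcal A}\mathcal L(\f U,\Phi)\le 0$ for every $\Phi\in\Y$.

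To conclude, I would apply a minimax theorem of Sion's type: $\mathcal A$ is compact and convex for the weak topology of $L^1(\T;\R^m)$, $\Y$ is convex, and $\mathcal L$ is real-valued with $\mathcal{L}(\cdot,\Phi)$ weakly lower semicontinuous and convex and $\mathcal{L}(\f U,\cdot)$ continuous and concave; therefore
\[
\inf_{\f U\in\mathcal A}J(\f U)=\inf_{\f U\in\mathcal A}\sup_{\Phi\in\Y}\mathcal{L}(\f U,\Phi)=\sup_{\Phi\in\Y}\inf_{\f U\in\mathcal A}\mathcal{L}(\f U,\Phi)\le 0<\infty .
\]
Since $J$ is weakly lower semicontinuous and $\mathcal A$ weakly compact, the infimum is attained at some $\f U^n\in\mathcal A$, and the strict convexity of $J$ makes this minimizer unique. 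Finally $J(\f U^n)=\inf_{\mathcal A}J\le 0$ means precisely that $\mathcal{L}(\f U^n,\Phi)\le 0$ for all $\Phi\in\Y$, which is \eqref{disineq}.

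The main obstacle, as I see it, is that $J(\f U)=+\infty$ for a generic $\f U\in\mathcal A$ (the linear-in-$\Phi$ terms of $\mathcal L$ are unbounded unless a compatibility condition holds), so the direct method cannot be run on $J$ naively; one genuinely needs to exchange the infimum and the supremum. Thus the crux is really the explicit estimate $\sup_\Y\inf_{\mathcal A}\mathcal L\le 0$, which relies on the energy interpolation $D\eta^*(\alpha\Phi)$ of Lemma~\ref{lem:surjective}, the cancellation \eqref{eq:integralFentropy}, and the Fenchel--Young inequality. The rest is routine but must be checked with care: the weak compactness of $\mathcal A$, the continuity of $\mathcal{L}(\f U,\cdot)$ with respect to the $\C^1$-topology on $\Y$ so that the minimax theorem applies (alternatively one may pass through finite-dimensional subspaces of $\Y$), and the strict-convexity argument for uniqueness.
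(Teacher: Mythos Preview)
Your proposal is correct and follows essentially the same route as the paper: the same constraint set $\mathcal A=\mathbb D^n$, the same Lagrangian $\mathcal L=\mathcal F_n^\tau$, the same competitor $D\eta^*(\alpha\Phi)$ from Lemma~\ref{lem:surjective} together with \eqref{eq:integralFentropy} and the subdifferential/Fenchel--Young inequality to obtain $\sup_\Phi\inf_{\mathcal A}\mathcal L\le 0$, a min--max theorem (the paper invokes Fan, you invoke Sion) to swap inf and sup, and then weak compactness plus the strict-convexity splitting $J(\f U)=\bigl(\E(\f U)-\E(\f U^{n-1})\bigr)+\sup_\Phi[\dots]$ for existence and uniqueness. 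Your case distinction $\alpha_\Phi=1$ versus $\alpha_\Phi<1$ and your explicit treatment of the degenerate case $\E(\f U^{n-1})=0$ are slightly more detailed than the paper's unified estimate $(1+\tau\mathcal K(\Phi)-\tfrac1\alpha)(\E(\hat{\f U})-\E(\f U^{n-1}))\le 0$, but the content is the same.
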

\begin{proof}
The proof is divided into different steps:

\textit{Step 1: Functional framework.}
We define the set
\[
\mathbb{D} ^n :={} \left \{ \f U \in \domE | \, \E(\f U ) \leq  {\E(\f U^{n-1})}{}\right \} 
\]
and the function
\begin{align*}
\mathcal{F}_n^\tau(\f U | \Phi) :={}& \left ( 1 + \tau \mathcal{K}(\Phi) \right ) \left ( \E (\f U ) - \E(\f U^{n-1}) \right ) 
- \left \langle \f U - \f U^{n-1} , \Phi \right \rangle 
+ \tau \F{\f U}{\Phi}  \,.
\end{align*}
Then we solve the time-discrete minimization problem \eqref{eq:timedis}
if we find a unique minimizer $\f U^n\in\domE^n$ of the function
\[
\mathcal H \colon \mathbb D^n\to\R,
\quad
\mathcal H(\f U)
=\sup_{\Phi  \in \mathbb{Y} } \mathcal{F}_n^\tau(\f U | \Phi  )\,.
\]

\textit{Step 2: Min-max theorem.}
In order to show that 
\begin{equation}
\inf_{\f U \in \mathbb{D}^n
} \sup_{\Phi \in\mathbb{Y}} \mathcal{F}_n^\tau(\f U| \Phi  ) = \sup _{\Phi\in\mathbb{Y}} \inf_{\f U \in \mathbb{D}^n
}\mathcal{F}_n^\tau(\f U| \Phi ) \,.\label{minmax}
\end{equation}
we apply a min-max theorem.
Since $\E$ is superlinear,
the set $\domE^n $ is weakly compact in $L^1(\T;\R^m)$ by Lemma~\ref{lem:delavalle}
and the function 
$ \f U \mapsto \mathcal{F}^\tau_n  ( \f U | \Phi)$ is convex and weakly lower semi-continuous for every $ \Phi \in\Y $. 
Moreover, the function $ \Phi \mapsto \mathcal{F}^\tau_n(\f U| \Phi)$ is concave for all $ \f U \in \mathbb D^n$
since $\mathcal K$ is convex and $\E(\f U)\leq\E(\f U^{n-1})$.
Therefore, \eqref{minmax} follows from Fan's 
min-max theorem~\cite[Theorem 2]{Fan1953}.

\textit{Step 3: Inequality~\eqref{disineq}.}
We show $ \inf_{\f U \in \domE^n}\mathcal H(\f U) \leq 0$.
To do so, 
let $\Phi \in \mathbb{Y} $ be arbitrary and define $ \tU =D\eta^\ast\circ\Phi$
and $\hat{\f U} =  D \eta ^* \circ( \alpha \Phi ) $,
where $\alpha>0$ is chosen as follows:
If $\E(\tU) \leq \E(\f U^{n-1})$, we set $\alpha =1$,
so that $\hat{\f U} = \tU$. 
If $\E(\tU) > \E(\f U^{n-1})$, we let $\alpha\in(0,1)$ 
such that $\E ( \hat{\f U}) = \E(\f U^{n-1})$,
which is possible by Lemma \ref{lem:surjective}. 
Then
the assumed identity \eqref{eq:integralFentropy}
implies
\[
\int_\T \f F(\hat{\f U }
) : 
\nabla \Phi
 \de \f x 
=\frac{1}{\alpha}\int_\T \f F( D\eta^\ast(\alpha\Phi(x))):\alpha\nabla\Phi(x)\,\dx
=0.
\]
Since
$\alpha\Phi \in\partial \E(\hat{\f U})$,
from the definition of the subdifferential
 of $\E$ we obtain 
\[
\begin{aligned}
 \inf_{\f U\in\domE^n} &\mathcal{F}^\tau_n ( \f U | \Phi)
 \leq \mathcal{F}^\tau_n ( \hat{\f U } |  \Phi )
 \\
 &= \left ( 1 + \tau \mathcal{K}(\Phi) \right ) \left ( \mathcal{E}(\hat{\f U }) - \E(\f U^{n-1})\right )  
 -\frac{1}{\alpha}  \left \langle\hat{\f U } - \f U^{n-1} ,\alpha\Phi 
 \right \rangle + \tau \int_{\T} \f F(\hat{\f U }) : \nabla \Phi \de \f x 
\\ 
&\leq  \left (1 + \tau \mathcal{K}(\Phi) -\frac{1}{\alpha}   \right ) \left ( \mathcal{E}(\hat{\f U }) - \E(\f U^{n-1})\right ) \leq  0\,.
 \end{aligned}
\]
The last inequality 
follows since $\alpha\in(0,1]$ and 
$\E(\hat{\f U}) \leq \E(\f U^{n-1})$.
Because $\Phi\in\Y$ was arbitrary,
identity \eqref{minmax} implies $\inf_{\f U \in \domE^n}\mathcal H(\f U)\leq 0$.

\textit{Step 4: Solvability of the optimization problem.}
From the identity
\[
\begin{aligned}
\mathcal H(\f U)
&= \left ( \E (\f U ) -  \E(\f U^{n-1}) \right )  \\
&\quad +     \sup_{\Phi\in \mathbb{Y}} 
  \left ( \tau \mathcal{K}(\Phi ) (\E(\f U)-  \E(\f U^{n-1}) )- 
\left \langle \f U - \f U^{n-1} , \Phi  \right \rangle 
+\tau \F{\f U}{\Phi }  \right ),
\end{aligned}
\]
we conclude the strict convexity 
of the mapping
$\mathcal H $
from the strict convexity of $\E$ and the convexity the function in the second line,
which is the supremum of convex functions. 
Additionally, $\mathcal H $
is not equal to $+\infty $ everywhere due to \textit{Step 3}. 
Furthermore, we observe the coercivity of $\mathcal H$ 
via 
\[
\mathcal H(\f U) 
\geq \mathcal{F}^\tau_n(\f U | \f 0)  =  \E (\f U ) -  \E(\f U^{n-1})\,
\]
since $\E$ is superlinear,
which also implies that 
$\mathbb D^n $ is weakly compact in $L^1(\T)$ by Lemma~\ref{lem:delavalle}. 
In total, $\mathcal H$ is a 
strictly convex, lower semicontinuous and coercive function on the compact set $\domE^n$ 
and thus has a unique minimizer $\f U^n$. 
\end{proof}

\begin{proof}[Proof of Theorem~\ref{thm:main}]
We prove the existence of  energy-variational solutions
via the convergence of a time-discretization scheme. 
We divide the proof into three steps.

\textit{Step 1: Discretized formulation.}
For $N\in\N$, let $\tau = T/N$ and $t^n= n \tau $ as above.
Set $\f U^0=\f U_0$,
define $\f U^n$ iteratively
by~\eqref{eq:timedis},
and set $E^n \coloneqq \E(\f U^n)$ for  $n\in\set{0,\dots,N}$.
Theorem~\ref{thm:disex} guarantees that $\f U^n\in\domE$ exists and satisfies
\begin{equation}\label{disrelen}
\begin{aligned}
E^n - E^{n-1}  & - \left \langle \f U^n - \f U^{n-1} , \Phi  \right \rangle 
\\
&+ \tau \left [\F{\f U^n}{\Phi }  +\mathcal{K}(\Phi ) [ \E(\f U^n)   -  E^{n-1}] 
\right ] \leq 0 \,
\end{aligned}
\end{equation}
for all $\Phi \in \mathbb{Y} $. 
For functions $\phi \in \C^\infty_c([0,T); [0,\infty))$ and $ \Phi \in \C^1( [0,T];\mathbb Y )$,
we define $ \phi^n \coloneqq \phi(t^n)$ and $ \Phi^n \coloneqq \Phi (t^n)$  for $n \in \{ 0, \ldots , N\}$.
Using $\Phi = \Phi ^{n-1}$ in~\eqref{disrelen},
multiplying the resulting inequality by $\phi^{n-1}$ and summing this relation
over $n\in \{ 1, \ldots , N\}$ implies 
\[
\begin{aligned}
\sum_{n=1}^N &\left [ \phi^{n-1} ( E^n-E^{n-1} ) - \phi^{n-1} \langle \f U^n-\f U^{n-1} , \Phi^{n-1} \rangle \right ]
\\
&+ \tau \sum_{n=1}^N \phi^{n-1} \left [  \F{\f U^n}{\Phi ^{n-1}} + \mathcal{K}(\Phi ^{n-1} ) (\E(\f U^n) -E^{n-1}) \right ] \leq 0 \,.
\end{aligned}
\]
Since $\phi^N=0$,
using a discrete integration-by-parts formula 
and dividing by $\tau>0$, 
we obtain
\begin{equation}
\begin{aligned}
-\sum_{n=1}^N &\left [\frac{\phi^n- \phi^{n-1}}{\tau} ( E^n- \left \langle  \f U^n,\Phi^{n-1}\right \rangle)  - \phi^{n} \left \langle \f U^n ,\frac{ \Phi^n- \Phi^{n-1}}{\tau}\right  \rangle \right ]- \phi^0 \bp{ \E(\f U_0)-\langle \Phi^0, \f U_0\rangle }
\\
&+ \sum_{n=1}^N \phi^{n-1} \left [  \F{\f U^n}{\Phi ^{n-1}} + \mathcal{K}(\Phi ^{n-1} ) (\E(\f U^n) -E^{n-1}) \right ] \leq 0 \,.
\end{aligned}
\label{disrel}
\end{equation}

\textit{Step 2: Prolongations.}
We define the piece-wise constant prolongations
\[
\begin{aligned}
\ov{\f U}^N(t) &:= \begin{cases}
\f U^n & \text{for } t \in (t^{n-1},t^n], \\
\f U_0 & \text{for } t = 0,
\end{cases} \,\quad
\\
\ov {E}^N(t) &:= \begin{cases}
\mathcal{E}(\f U^{n})
& \text{for } t \in (t^{n-1},t^n],
 \\
\mathcal{E}(\f U_0) &\text{for } t = 0,
\end{cases}\,
\qquad
\un{E}^N(t) := \begin{cases}\mathcal{E}(\f U^N)  & \text{for } t  = T, \\
\mathcal{E}(\f U^{n-1} ) & \text{for } t \in [t^{n-1},t^n).
\end{cases}\,
\end{aligned}
\]
Analogously, for  test functions $ \psi \in \C^1([0,T]; \mathbb{X})$,
where $\mathbb{X}$ is $\R$ or $\mathbb{Y}$,
we define the piece-wise constant and piece-wise linear prolongations by
\begin{align*}
    \overline{\psi}^N(t)& \coloneqq \begin{cases}
\psi(t^n) & \text{for } t \in (t^{n-1},t^n], \\
\psi(0) & \text{for } t = 0,
\end{cases} \,\qquad
      \underline{\psi}^N(t) \coloneqq \begin{cases} \psi(T)  & \text{for } t  = T, \\
\psi(t^{n-1}) & \text{for } t \in [t^{n-1},t^n),
\end{cases}\\
     \widehat{\psi}^N(t)& \coloneqq \frac{\psi(t^n)-\psi(t^{n-1})}{\tau}(t-t^{n-1}) + \psi(t^{n-1})
      \quad \text{for } t\in[t^{n-1},t^n]\,.
\end{align*}
With this notation, the discrete energy-variational inequality \eqref{disrel}
becomes
\begin{multline}\label{disenin}
- \int_0^T \Bp{ \t \widehat{\phi} ^N\left [ \ov E ^N - \langle \ov {\f U}^N, \un{\Phi}^N  \rangle \right ] - \ov \phi^N \langle \ov{\f U}^N  , \t \widehat{\Phi}^N\rangle  + \un{\phi}^N \mathcal{K}(\un{\Phi}^N )\un{E}^N} \de t 
\\
+ \int_0^T \un\phi^N \left [\F{\ov{\f U}^N}{\un{\Phi}^N}+ \mathcal{K}(\un{\Phi}^N )  \E(\ov{\f U}^N) \right ]\de t - \phi(0)\left[\E(\f U_0) - \langle \Phi(0) , \f U_0 \rangle \right] \leq 0 
\end{multline}
for all $ \Phi \in \C^1( [0,T];\mathbb Y )$ 
and all $\phi \in \C^1_c([0,T))$ with $\phi \geq 0$. 

\textit{Step 3: Convergence.}
Since we have 
$0\leq\E(\f U^{n})\leq \E(\f U^{n-1})$, 
we obtain that $t\mapsto \ov{E}^N(t)  $ and  $t\mapsto \un{E}^N(t)  $
are non-negative and non-increasing functions and as such bounded in $\BV$ by the initial value $E^0=\E(\f U_0)$. 
Moreover, by the superlinear growth of $\eta $, we infer 
from $ \E ( \ov{\f U}^N(t)) \leq \E(\f U_0)$
 that the sequence $\{ \ov{\f U}^N\}_{N\in\N}$ is bounded in $L^\infty(0,T;\domE)$.
Thus, we may extract (not-relabeled) subsequences 
 such that there exist $\ov{E},\un{E}\in \BV$ and $\f U \in L^\infty(0,T;\domE)$ such that
\[
\begin{aligned}
\ov{\f U}^n &\xrightharpoonup{(\ast)} \f U 
&&\quad\text{in } L^\infty(0,T;L^1(\T;\R^m))\,,\\
( \ov{E}^N,\,\un{E}^N) &\xrightharpoonup{\ \ast\ } (\ov{E},\, \un{E}) 
&&\quad \text{in }\BV\,,\\
( \ov{E}^N(t),\,\un{E}^N(t)) &\xrightarrow{\ \phantom{\ast}\ } (\ov{E}(t),\, \un{E}(t)) 
&&\quad \text{for all }t\in[0,T]\,,
\end{aligned}
\]
where the weak$(^\ast)$ convergence in 
$L^\infty(0,T;L^1(\T;\R^m))$ was defined in \eqref{eq:weakconv.LinfL1},
and where we used Helly's selection theorem (see~\cite[Thm.~1.126]{barbu} for example).
We next show that $\ov{E}^N $ and $\un{E}^N$ converge to the same limit,
that is, $\ov{E}=\un{E}$ a.e.~in $(0,T)$. 
Due to the monotony~$E^n \leq E^{n-1}$, we find
\[
\int_0^T | \ov{E}^N-\un{E}^N|\de t 
= \sum_{n=1}^N \tau   (E^{n-1}-E^{n} ) 
= \tau( E(0) - E^N) \leq\tau  E(0) \longrightarrow  0 \quad 
\text{ as } N \to\infty\,.
\]
Since $\BV$ continuously embeds into $L^1(0,T)$,
this allows to identify $\un{E}=\ov{E}=:E$.  
Due to the pointwise convergence~in $[0,T]$ of $\ov{E}^N$, 
we infer from
the weak lower semi-continuity of $\E$  that $ E \geq \E(\f U)$ a.e. in $(0,T)$. 
We 
clearly have
\begin{align*}
\t \widehat{\phi}^N &\ra \t \phi, 
& \ov{\phi}^N &\ra \phi, 
& \un{\phi}^N &\ra \phi  
&&\text{ pointwise in } [0,T] \text{ as }N \ra \infty \,,
\\
\t \widehat{\Phi}^N &\ra \t \Phi, 
& \un{\Phi}^N &\ra \Phi, 
& \nabla\un{\Phi}^N &\ra \nabla\Phi \quad &\text{in } \C(\T;\R^m) 
&\text{ pointwise in } [0,T] \text{ as }N \ra \infty 
\,.
\end{align*}
With these observations, we may pass to the limit in the weak form~\eqref{disenin}. 
We note that $\ov{\f U}^N$  occurs linearly in the first line of~\eqref{disenin}. All other terms are bounded and converge almost everywhere in $(0,T)$.
This implies that 
\begin{multline*}
\lim_{N\ra\infty} \int_0^T \Bb{ \t \widehat{\phi} ^N\left [ \ov E ^N - \langle \ov {\f U}^N , \un{\Phi}^N \rangle \right ] - \ov \phi^N \langle \ov{\f U}^N  , \t \widehat{\Phi}^N\rangle  + \un{\phi}^N \mathcal{K}(\un{\Phi}^N )\un{E}^N }\de t \\
= \int_0^T\Bb{ \t {\phi}\left [  E  - \langle   {\f U} , {\Phi}, \rangle \right ] -  \phi \langle {\f U}  , \t {\Phi}\rangle  + {\phi} \mathcal{K}({\Phi} ){E}} \de t\,.
\end{multline*}
Observing that the second line in~\eqref{disenin} is bounded from below due to Hypothesis~\eqref{hypo} and that $\phi\geq 0$ in $[0,T]$, we may apply Fatou's lemma and the weak lower semi-continuity of the function from~\eqref{ass:convex} as well as the continuity of $\mathcal{K}$ in order to pass to the limit in the second line of~\eqref{disenin}, which yields
\begin{multline*}
\liminf_{N\ra\infty} \left [ \int_0^T \un\phi^N \left [\F{\ov{\f U}^N}{\un{\Phi}^N}+ \mathcal{K}(\un{\Phi}^N )  \E(\ov{\f U}^N) \right ]\de t \right ] \\
\geq  \int_0^T\liminf_{N\ra\infty} \Bb{ \un\phi^N \left [\F{\ov{\f U}^N}{\un{\Phi}^N}+ \mathcal{K}(\un{\Phi}^N )  \E(\ov{\f U}^N) \right ]}\de t \\
 \geq \int_0^T \phi \int_{\T} \f F (\f U) : \nabla \Phi \de \f x + \mathcal{K}(\Phi  ) \mathcal{E}(\f U)  \de t\,.
\end{multline*}
In total, we infer from \eqref{disenin} that
\[
\begin{aligned}
-\int_0^T &\t  \phi \left [ {E} - \left \langle {\f U } ,  \Phi \right \rangle\right ]  \de t - \phi(0)\left[\E(\f U_0) - \langle  \f U_0,\Phi(0)  \rangle \right]\\
&+\int_0^T \phi  \left [\left \langle {\f U } , \t  \Phi \right \rangle+ \F{\f U}{\Phi } +    \mathcal{K}(\Phi ) [\E({\f U}) - E]   \right ] \de t 
 \leq 0\,.
\end{aligned}
\]
Via Lemma~\ref{lem:invar}, we now end up with the energy-variational inequality~\eqref{envarform} and with
\begin{equation*}
\lim_{t \searrow 0}\left [ E (t) - \langle \f U(t),\Phi(t)  \rangle \right] \leq \E(\f U_0) - \langle\f U_0, \Phi(0)  \rangle\,,
\end{equation*}
after possible redefining the function on a set of measure zero.
By Lemma~\ref{lem:var.affine},
this inequality implies $\f U(t+)=\f U_0$ in $\mathbb Y^\ast$, that is, 
the initial value is attained in the asserted sense.
\end{proof}

\section{Two incompressible fluid models\label{sec:incomp}}
Our first two examples are models
for incompressible inviscid fluids, the incompressible magnetohydrodynamical equations and the incompressible Euler system. 
While the latter can be seen as a special case of the first system,
it allows to derive more properties 
and the comparison
with weak dissipative solutions for the Euler equations.

\subsection{Incompressible magnetohydrodynamics}
\label{sec:magneto}
As the first example, we consider the equations modeling 
an incompressible, inviscid and electronically conductive fluid.
The corresponding equations of motion are the 
magnetohydrodynamical equations given by
\begin{subequations}\label{eq:magneto}
\begin{align}
\t \f v + ( \f v \cdot \nabla ) \f v -  \mu (\f H \cdot \nabla ) \f H  + \nabla p + \nabla \frac{\mu}{2}| \f H|^2 ={}& \f 0 ,  \qquad && \text{in }\T \times (0,T)\,,\label{eq:incompNav}
\\
\t \f H - \curl  ( \f v \times \f  H )  ={}& \f 0 \qquad && \text{in }\T \times (0,T)\,,\label{eq:mag}
\\
 \di \f v = 0 , \quad \di \f H ={}& 0 \qquad && \text{in }\T \times (0,T)\,,
 \label{eq:magneto.div}
\\
\f v (0) = \f v_0, \quad \f H(0) = {}&\f H_0 \qquad && \text{in } \T \,.
\end{align}
\end{subequations}
Here $ \f v : \T\times (0,T) \ra \R^d $ denotes the velocity of the fluid, $ \f H : \T \times (0,T) \ra \R^d $ is the magnetic field,  $p : \T\times (0,T) \ra \R$ denotes the pressure, and $\mu \in (0,\infty)$ is the quotient of the magnetic permeability and the constant density of the fluid. 

\begin{remark}
We note that the above equation is not formally of the form~\eqref{eq.pde}. The pressure is not a function of $\f H $ and $\f v$ but should rather be seen as a Lagrange multiplier to fulfill the divergence-free condition in the evolution. 
The first equation~\eqref{eq:incompNav} can be interpreted as $\t \f v+ P\di( \f v\otimes \f v - \mu \f H \otimes \f H 
) =0$, where $P$ denotes the Helmholtz projection on divergence-free functions, 
and condition \eqref{eq:magneto.div} is incorporated in the functional framework
by working in the space of divergence-free functions.
Another viewpoint is that one can 
derive a weak formulation of \eqref{eq:magneto}
by testing with divergence-free test functions.
Then the pressure term can be omitted,
and the weak formulation is of the form~\eqref{eq:weak.intro}.
\end{remark}

To introduce the notion of energy-varational solutions 
to the magnetohydrodynamical equations \eqref{eq:magneto},
we define the corresponding mathematical entropy
as the physical energy
\begin{equation}\label{eq:magneto.energy}
\E (\f v, \f H)
= \frac{1}{2}\norml{\f v }_{L^2(\T)}^2+ \frac{\mu}{2}\norml{\f H}_{L^2(\T)}^2.
\end{equation}
Moreover, we 
we introduce the class of divergence-free vector fields
\[
\LRsigma{q}(\T)
\coloneqq\setcL{\f v\in\LR{q}(\T;\R^d)}{\forall \f \varphi\in \C^1(\T)\colon \int_\T \f v \cdot \nabla\f \varphi\,\dx =0}
\]
for $q\in[1,\infty)$. 
The mathematical precise sense of energy-variational solutions
is given in the following definition.

\begin{definition}\label{def:magneto}
A tuple $(\f v , \f H , E )\in  L^\infty(0,T; L^2_\sigma ( \T ) )^2  \times \BV $  is called an energy-variational solution to the incompressible  magnetohydrodynamical equations
\eqref{eq:magneto} 
if it satisfies $\E (\f v(t), \f H(t))\leq E(t)$ for a.a.~$t\in (0,T)$,
and if the energy-variational inequality
\begin{equation}
\begin{aligned}
&\left [ E - \int_{\T} 
 \f v  \cdot \f \varphi -  \f H \cdot \f \psi 
 \de \f x  \right ] \Big|_s^t 
+ \int_s^t \int_{\T}\bb{
\f v \cdot \t \f \varphi 
+ \left  (\f v  \otimes \f v - \mu \f H \otimes \f H \right  ) : \nabla \f \varphi 
\de \f x 
\\
&\quad
+ \int_s^t \int_{\T}
\f H \cdot \t \f \psi  + \left  (\f H  \otimes \f v -  \f v \otimes \f H   \right ) :\nabla \f \psi 
\de \f x 
+ \mathcal{K}(\f \varphi, \f \psi  ) \left [ \E(\f v, \f H ) - E\right ] 
}\de  s
\leq 0
\,
\end{aligned}
\label{relenMagneto}
\end{equation}
holds for a.e.~$s<t\in(0,T)$ including $s=0$ with $( \f v (0), \f H (0)) = ( \f v_0 ,\f H_0)$ and all test functions $(\f \varphi, \f \psi )  \in \C^1(\T \times [0,T]; \R^{d})^2$ with $ \di \f \varphi =  \di \f \psi = 0$.
Here, 
\begin{equation}
\mathcal{K}(\f \varphi, \f \psi ) =
2\|(\nabla \f \varphi )_{\sym}\| _{L^\infty(\O;\R^{d\times d})} + \frac{2}{\sqrt\mu} \|(\nabla \f \psi )_{\skw}\| _{L^\infty(\O;\R^{d\times d})}\label{K:magneto}
\end{equation}
with
\[
\| \Phi\| _{L^\infty(\O;\R^{d\times d})} 
=
\esssup_{x\in\O} | \Phi(x) |_2,
\]
where $\snorm{\cdot}_2$ denotes the spectral norm defined in \eqref{eq:spectralnorm}.
\end{definition}
 \begin{theorem}\label{thm:magneto}
For every initial datum $(\f v _0, \f H_0)\in L^2_{\sigma}(\T)\times L^2_{\sigma}(\T)$, there exists an energy-variational solution in the sense of Definition~\ref{def:magneto} with $E(0)=\mathcal{E}(\f v _0, \f H_0)$ and $ (\f v , \f H) \in \C_w([0,T];L^2_{\sigma}(\T)\times L^2_{\sigma}(\T))$, and the initial values are attained in the strong sense. 
\end{theorem}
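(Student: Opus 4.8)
The plan is to realize the magnetohydrodynamical system \eqref{eq:magneto} as a special instance of the abstract conservation law \eqref{eq} and then invoke Theorem~\ref{thm:main}. I would take the state variable $\f U=(\f v,\f H)$, the flux $\f F(\f v,\f H)=(\f v\otimes\f v-\mu\,\f H\otimes\f H,\ \f H\otimes\f v-\f v\otimes\f H)$, the entropy $\eta(\f v,\f H)=\tfrac12\snorm{\f v}^2+\tfrac{\mu}{2}\snorm{\f H}^2$ (so that the induced functional $\E$ is exactly the energy \eqref{eq:magneto.energy}), and as the space of test functions the closed linear subspace $\Y=\{(\f\varphi,\f\psi)\in\C^1(\T;\R^d)^2\mid\di\f\varphi=\di\f\psi=0\}$ of $\C^1(\T;\R^{2d})$, so that the divergence constraints \eqref{eq:magneto.div} and the pressure are built into the functional setting. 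The elementary properties of $\eta$ — strict convexity, lower semi-continuity, $\eta(\f 0)=0$, and the superlinear growth \eqref{eq:superlin.growth} (it is a coercive quadratic form) — are immediate, and one computes $D\eta^\ast(\f a,\f b)=(\f a,\tfrac1\mu\f b)$. Since $\eta$ is equivalent to a squared norm and $\Y$ is a linear subspace left invariant by $D\eta^\ast$, Remark~\ref{rem:domE} identifies $\domE$ with the strong $L^2$-closure of $\Y$, namely $\LRsigma2(\T)\times\LRsigma2(\T)$; in particular $\domE$ is convex and contains the prescribed data. The growth bound \eqref{BoundF} is clear from $\snorm{\f F(\f v,\f H)}\le C(\snorm{\f v}^2+\snorm{\f H}^2)\le C'\eta(\f v,\f H)$.

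The substantive part of checking Hypothesis~\ref{hypo} is the two structural conditions. For \eqref{eq:integralFentropy} I would insert $D\eta^\ast(\Phi)=(\f\varphi,\tfrac1\mu\f\psi)$ for $\Phi=(\f\varphi,\f\psi)\in\Y$ and integrate by parts on $\T$: using $\di\f\varphi=\di\f\psi=0$, each resulting integral either vanishes or cancels against another, so the total integral is zero. For the crucial convexity condition \eqref{ass:convex} I would take the weight $\mathcal K$ from \eqref{K:magneto}, split it as $\mathcal K=\mathcal{K}_1+\mathcal{K}_2$ into its two summands, and use that $\f v\otimes\f v$ and $\f H\otimes\f H$ are symmetric while $\f H\otimes\f v-\f v\otimes\f H$ is skew-symmetric; this exhibits $\f U\mapsto\F{\f U}{\Phi}+\mathcal K(\Phi)\E(\f U)$ as the integral over $\T$ of the pointwise quadratic form
\[
q_\Phi(\f v,\f H)=\f v^\transpose(\nabla\f\varphi)_\sym\f v-\mu\,\f H^\transpose(\nabla\f\varphi)_\sym\f H+2\,\f H^\transpose(\nabla\f\psi)_\skw\f v+\mathcal K(\Phi)\bp{\tfrac12\snorm{\f v}^2+\tfrac\mu2\snorm{\f H}^2},
\]
where the matrices are evaluated at the integration point. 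Distributing $\mathcal K=\mathcal{K}_1+\mathcal{K}_2$, the $\mathcal{K}_1$-part splits into $\f v^\transpose(\nabla\f\varphi)_\sym\f v+\tfrac{\mathcal{K}_1}{2}\snorm{\f v}^2$ and $\mu(-\f H^\transpose(\nabla\f\varphi)_\sym\f H+\tfrac{\mathcal{K}_1}{2}\snorm{\f H}^2)$, each a non-negative quadratic form because $\tfrac{\mathcal{K}_1}{2}=\|(\nabla\f\varphi)_\sym\|_{L^\infty(\T)}$ majorizes the spectral norm $\snorm{(\nabla\f\varphi)_\sym}_2$ at a.e.\ $x$ and hence controls the largest and the smallest eigenvalue of $(\nabla\f\varphi)_\sym$; the $\mathcal{K}_2$-part equals $2\f H^\transpose(\nabla\f\psi)_\skw\f v+\tfrac{\mathcal{K}_2}{2}(\snorm{\f v}^2+\mu\snorm{\f H}^2)$, which is non-negative by the Young inequality $2\snorm{\f v}\sqrt\mu\snorm{\f H}\le\snorm{\f v}^2+\mu\snorm{\f H}^2$ together with $\tfrac{\mathcal{K}_2}{2}=\tfrac1{\sqrt\mu}\|(\nabla\f\psi)_\skw\|_{L^\infty(\T)}\ge\tfrac1{\sqrt\mu}\snorm{(\nabla\f\psi)_\skw}_2$. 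Thus $q_\Phi$ is non-negative and convex at a.e.\ point, so $\f U\mapsto\F{\f U}{\Phi}+\mathcal K(\Phi)\E(\f U)$ is non-negative and convex, and being strongly continuous on $\LRsigma2(\T)^2$ it is lower semi-continuous; $\mathcal K$ itself is non-negative, convex and continuous on $\Y$ as a sum of norms of linear images of $(\f\varphi,\f\psi)$. Hence Hypothesis~\ref{hypo} holds.

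With this, Theorem~\ref{thm:main} yields a pair $((\f v,\f H),E)\in L^\infty(0,T;\domE)\times\BV$ satisfying \eqref{envarform} for all $\Phi\in\C^1([0,T];\Y)$, with $\E(\f v,\f H)\le E$ a.e.\ and $E(0+)=\E(\f v_0,\f H_0)$, so that $E(0)=\E(\f v_0,\f H_0)$ after the redefinition of Proposition~\ref{prop:reg}. To reach the test-function class of Definition~\ref{def:magneto}, I would approximate an arbitrary admissible pair $(\f\varphi,\f\psi)\in\C^1(\T\times[0,T];\R^d)^2$ with $\di\f\varphi=\di\f\psi=0$ by its spatial mollifications, which lie in $\C^1([0,T];\Y)$, preserve the divergence constraints on the torus, and converge in $\C^1(\T\times[0,T])$; since every term of \eqref{relenMagneto} is continuous in $(\f\varphi,\f\psi)$ for this convergence, \eqref{envarform} passes to \eqref{relenMagneto}, so $(\f v,\f H,E)$ is an energy-variational solution in the sense of Definition~\ref{def:magneto}. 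For the stated regularity I would then apply Proposition~\ref{prop:betterreg}: its hypothesis holds because $\Y$ is dense in $\LRsigma2(\T)^2$, whence $\langle\f V-\f W,\Phi\rangle=0$ for all $\Phi\in\Y$ forces $\f V=\f W$ when $\f V-\f W\in\LRsigma2(\T)^2$. This gives $(\f v,\f H)\in\C_w([0,T];L^1(\T;\R^{2d}))$; since the energies $\E(\f v(t),\f H(t))\le E(t)\le E(0)$ keep the values bounded in $\LRsigma2(\T)^2$, this upgrades to weak continuity into $\LRsigma2(\T)\times\LRsigma2(\T)$. Because $E(0)=\E(\f v_0,\f H_0)$, Proposition~\ref{prop:betterreg} moreover yields $\E(\f v(t),\f H(t))\to\E(\f v_0,\f H_0)$ as $t\to0$; combining this with the weak $L^2$-convergence $(\f v(t),\f H(t))\rightharpoonup(\f v_0,\f H_0)$ and the separate lower semi-continuity of $\|\f v(t)\|_{L^2(\T)}^2$ and $\|\f H(t)\|_{L^2(\T)}^2$ forces convergence of both squared norms, hence $\f v(t)\to\f v_0$ and $\f H(t)\to\f H_0$ strongly in $L^2(\T)$.

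The hard part is the convexity condition \eqref{ass:convex}: one must notice that the coupling term of the flux pairs only against the skew-symmetric part of $\nabla\f\psi$, and then balance the velocity and the magnetic field against one another through the parameter $\mu$ in a Young-type estimate — this is precisely what fixes the shape and the constants of the weight $\mathcal K$ in \eqref{K:magneto}. Everything else — the growth bound, the integral identity \eqref{eq:integralFentropy}, the identification of $\domE$, the mollification of test functions, and the regularity upgrades — is routine once the abstract results of Sections~\ref{sec:pre}--\ref{sec:hyper} are available.
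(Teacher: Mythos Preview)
Your proposal is correct and follows essentially the same route as the paper: identify the abstract data $(\eta,\f F,\Y)$, verify Hypothesis~\ref{hypo} (in particular the non-negativity of the quadratic form in~\eqref{ass:convex} via the weight $\mathcal K$ in~\eqref{K:magneto}), apply Theorem~\ref{thm:main}, and then invoke Proposition~\ref{prop:betterreg}. Your pointwise decomposition of the quadratic form is a slightly more explicit version of the paper's single absolute-value estimate $\snorml{\F{\f U}{\Phi}}\le\mathcal K(\Phi)\E(\f U)$, and you additionally spell out two details the paper leaves implicit: the passage from test functions in $\C^1([0,T];\Y)$ to those in $\C^1(\T\times[0,T];\R^d)^2$, and the upgrade from the $L^1$-conclusions of Proposition~\ref{prop:betterreg} to weak $L^2$-continuity and strong $L^2$-attainment of the data.
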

 \begin{proof}
 We have to show that the Hypothesis~\ref{hypo} is fulfilled.
 To realize the system~\eqref{eq:magneto}
  in the abstract framework introduced above,
we introduce the quadratic entropy functional $ \eta : \R^{2d} \ra \R$ via 
$\eta(\f y _1 , \f y_2 )=\frac{1}{2}\snorm{\f y_1}^2+\frac{\mu}{2}\snorm{\f y_2}^2$,
which is obviously strictly convex, lower semi-continuous and 
has superlinear growth.
The space of test functions is given by
$\Y=\setcl{(\f\varphi,\f\psi)\in\C^1(\T;\R^d)^2}{\di\f\varphi=\di\f\psi=0}$,
and we have
$\domE=
\LRsigma{2}(\T)\times \LRsigma{2}(\T)$
(see Remark~\ref{rem:domE}),
which is obviously convex.
Note that $\eta^\ast ( \f z_1 , \f z _2) =\frac{1}{2}\snorm{\f z_1}^2+\frac{1}{2\mu }\snorm{\f z_2}^2 $,
and the corresponding total entropy 
is given by the physical energy $\E$ from \eqref{eq:magneto.energy}.
The function $\f F: \R^ {2d} \ra \R^{2d\times d } $ is given by 
$$ \f F(\f v, \f H  ) = \begin{pmatrix}
 \f v \otimes \f v - \mu \f H \otimes \f H \\  \f H \otimes \f v - \f v \otimes \f H  \end{pmatrix}
  \,.$$
Observing that $D\eta^*(\f z_1, \f z _2 ) = ( \f z_1, \frac{\f z_2}{\mu})^T$, we find  that the condition~\eqref{eq:integralFentropy} is fulfilled  due to 
\begin{align*}
\int_{\T} &\f F( D \eta^*( \f \varphi , \f \psi) ) : \nabla \begin{pmatrix}
\f \varphi\\\f \psi 
\end{pmatrix}\de \f x \\
&= \int_{\T} \left ( \f \varphi \otimes \f \varphi - \frac{1}{\mu} \f \psi \otimes \f \psi \right ) : \nabla \f\varphi + \frac{1}{\mu} \left ( \f \psi \otimes \f \varphi - \f \varphi \otimes \f \psi \right ) : \nabla\f  \psi \de \f x 
\\
&=\int_{\T} ( \f \varphi \cdot \nabla )\frac{|\f \varphi|^2}{2} - \frac{1}{\mu}\left [\left ( \f \psi \otimes \f \psi \right ) : \nabla \f \varphi 
- ( \f \varphi \cdot \nabla )\frac{|\f \psi|^2}{2} - \nabla \f\varphi : \left ( \f \psi \otimes \f \psi  \right ) - \f \varphi \cdot \f \psi \di \f\psi 
\right ]\de \f x 
\\&=0\,,
\end{align*}
where we integrated by parts in the last term. 
The last equality follows 
by another integration by parts 
since $\f \varphi$ and $\f \psi$ are solenoidal vector fields. 
Moreover, inequality~\eqref{BoundF} is fulfilled for $C= 2 + \frac{2}{\sqrt\mu}$
since from Young's inequality, it follows
\begin{align*}
| \f F(\f v , \f H )| \leq \snorm{\f v}^2 + {\mu} \snorm{\f H}^2 + 2 \snorm{\f v}\snorm{\f H} \leq 2 \eta(\f v, \f H) + \frac{2}{\sqrt \mu} \eta(\f v, \f H) \,.
\end{align*}
Finally, we have to show that the choice~\eqref{K:magneto} 
of the regularity weight $\mathcal{K}$ yields the convexity of
the function from~\eqref{ass:convex}. 
We infer similarly to the previous estimate that
\begin{align*}
\snormL{\int_{\T} \f F (\f v , \f H ) : \nabla \begin{pmatrix}
\f \varphi \\\f \psi 
\end{pmatrix} \de\f x }
 \leq {}& \left ( \| \f v \otimes \f v- \mu  \f H \otimes \f H \|_{L^1(\T;\R^{d\times d})} \right ) \| (\nabla \f \varphi )_{\sym}\|_{L^\infty(\T;\R^{d\times d})} 
\\& + 2\| \f v \otimes \f H \|_{L^1(\T;\R^{d\times d})} \| (\nabla \f \psi  )_{\skw}\|_{L^\infty(\T;\R^{d\times d})} 
\\ \leq {}&
\mathcal{K}(\f \varphi , \f \psi ) \E(\f v, \f H)\,. 
\end{align*}
This implies that the mapping 
\[ (\f v, \f H ) \mapsto \int_{\T} \f F( \f v, \f H) : \nabla \begin{pmatrix}
\f \varphi \\\f \psi 
\end{pmatrix} \de\f x +\mathcal{K}(\f \varphi , \f \psi ) \E(\f v, \f H)  \]
is quadratic and non-negative, and thus convex and weakly lower semi-continuous. 
In total, Hypothesis \ref{hypo} is satisfied,
and from Theorem~\ref{thm:main}
we infer the existence of a solution in the sense of
Definition~\ref{def:magneto}
with the regularity from Proposition~\ref{prop:reg}. 
Finally, 
Proposition~\ref{prop:betterreg} implies the additional regularity. 
\end{proof}

\begin{remark}[Alternative choice of $\f F$]
We could also define the function  $\f F$ by
$$ \f F(\f v, \f H  ) = \begin{pmatrix}
 \f v \otimes \f v - \mu \f H \otimes \f H + I \left ( \frac{| \f v |^2}{2}+ \frac{\mu|\f H|^2}{2}\right ) \\  \f H \otimes \f v - \f v \otimes \f H - ( \f v \cdot \f H) I   \end{pmatrix}
  \,.$$
  With this definition, we can derive the relation~\eqref{eq:entropyflux.new} for the function $\tilde{\f q}$ given by
 $$ 
 \tilde{\f q} (\f v , \f H) = \f v \left ( \frac{| \f v |^2}{2}+ \frac{\mu|\f H|^2}{2}\right ) - \mu \f H ( \f H \cdot \f v) \,,
 $$
and the function $\f F$ fits better into our abstract framework with Hypothesis~\ref{hypo:smooth}. But since both choices yield the same when tested with solenoidal functions, we rather use the simpler  version in the above proof. Note that both choices fulfill the condition~\eqref{eq:integralFentropy}. 
 
\end{remark}
\begin{remark}[Boundary conditions]\label{rem:boundary}
The concept can be transferred to the usual impermeability boundary conditions.
Indeed, on a bounded Lipschitz domain $\Omega \subset \R^d$, we may equip the system~\eqref{eq:magneto} with the boundary conditions $ \f n \cdot \f v = 0 = \f n \cdot \f H$ on $\partial \Omega$,
where $\f n$ denotes the outer unit normal vector at $\partial\Omega$. 
The associated space for the test functions $\mathbb Y$  has to be restricted to $ (\f\varphi, \f \psi)\in \Y := \C^1(\Omega \times [0,T]; \R^{2d}) $ with $ \f n \cdot \f \varphi = 0 = \f n \cdot \f \psi $ on $\partial \Omega$ and $\di \f \varphi = 0 = \di \f \psi $ in $\Omega$. 
Similarly to the above calculation, one may verify that condition~\eqref{eq:integralFentropy} is still fulfilled, where the integral is taken over $\Omega$ instead of $\T$. 
\end{remark}

\subsection{Incompressible Euler equations}
\label{sec:incompEuler}
For the sake of completeness, 
we apply the abstract result to the incompressible Euler equations, even though 
the existence of energy-variational solution to this system
was already proven in~\cite{envar}. 
Actually, this can be seen as a special case of the 
magnetohydrodynamical equations \eqref{eq:magneto}
by setting $\f H\equiv 0$.
However, here we can give a finer choice of the regularity weight $\mathcal K$ 
that allows us to show that energy-variational solutions 
are also dissipative weak solutions. 

The  incompressible Euler equations are given by
\begin{subequations}\label{eq:Incomp}
\begin{alignat}{2}
\t \f v + ( \f v \cdot \nabla ) \f v + \nabla p = \f 0 , \quad \di \f v ={}& 0 \qquad && \text{in }\T \times (0,T)\,,
\\
\f v (0) ={}& \f v_0 \qquad && \text{in } \T \,.
\end{alignat}
\end{subequations}
Again, $ \f v : \T\times (0,T) \ra \R^d $ denotes the velocity of the fluid and $p : \T\times (0,T) \ra \R$ denotes the pressure. 
We introduce the energy  $\E : \LRsigma{2}(\T) \ra \R$ 
with $\E(\f v) := \frac{1}{2} \norml{\f v }_{L^2(\T)}^2$.

\begin{definition}\label{def:envarIncommp}
A pair $(\f v , E )\in  L^\infty(0,T; L^2_\sigma ( \T ))  \times \BV $  is called an energy-variational solution to the incompressible Euler system 
\eqref{eq:Incomp}
if $E(t) \geq \E (\f v )$ for a.e.~$t\in (0,T)$
and if the inequality
\begin{equation}
\left [ E - \int_{\T} 
 \f v  \cdot \f \varphi 
 \de \f x  \right ] \Big|_s^t 
+ \int_s^t \int_{\T}
\f v \cdot \t \f \varphi 
+ { \f v  \otimes \f v}  : \nabla \f \varphi 
\de \f x 
+ \mathcal{K}(\f \varphi ) \left [ \E(\f v) - E\right ] 
\de  s
\leq 0
\,\label{relenIncomp}
\end{equation}
holds for a.e.~$s<t\in(0,T)$, including $s=0$ with $\f v (0)  =  \f v_0 $,
and for all test functions $\f \varphi\in \C^1(\T \times [0,T]; \R^{d})$ with $ \di \f \varphi = 0$,
where
\begin{equation}\label{eq:regweight.incomp}
\mathcal{K}(\f \varphi) =2
\|(\nabla \f \varphi )_{\sym,-}\| _{L^\infty(\O;\R^{d\times d})}\,.
\end{equation}
\end{definition} 
Besides existence of energy-variational solutions,
we shall show that they can be identified with so-called weak dissipative solutions
to the incompressible Euler equations \eqref{eq:Incomp}. 
The following definition is an adaption of the compressible case, 
see Definition~\ref{def:weakEul} below.

 \begin{definition}[Dissipative weak solution]\label{def:disssol.incomp}
We call a pair   $(\f v , E )\in  L^\infty(0,T; L^2_\sigma ( \T ))  \times \BV $
a dissipative weak solution to the Euler equations, if there exists a \textit{Reynolds defect}  $ \reynold \in  L^\infty _{w^*} (0,T;\mathcal{M}( \O ; \mathbb{R}^{d\times d}_{\sym,+}))$ such that the equation 
\begin{align}
\int_{\T} \f v \cdot  \f \varphi\de \f x \Big|_s^t
=  \int_s^t\!\! \int_{\T} \f v \cdot \partial_t \f \varphi  +  \f v \otimes \f v:  \nabla \f \varphi\de \f x\de s   + \int_s^t\!\!\int_{\T}   \nabla \f \varphi:\de  \reynold(s) \de s \label{measeq}
\end{align}
is fulfilled for all $ \f \varphi \in \C^1(\T\times[0,T];\R^d )$ with $\dv\f\varphi=0$,
and for  a.a.~$s,t\in(0,T) $, including $s=0$ with $\f v(0)=\f v_0$,
and if $E$ is a   non-increasing function with $E(0+) = \E(\f v_0)$ such that 
\begin{equation}
\E(\f v (t))+ \frac{1}{2}\int_{\T} I :  \de \reynold(t)   \leq E(t)   \,\label{measeneq}
\end{equation}
for a.a.~$t\in(0,T)$.
\end{definition}

 \begin{theorem}\label{thm:incompEuler}
For every initial datum $\f v _0\in L^2_{\sigma}(\T)$, there is an energy-variational solution in the sense of Definition~\ref{def:envarIncommp}
with $E(0)=\mathcal{E}(\f v _0)$ with $\f v \in \C_{w}([0,T]; L^2_{\sigma}(\T) )$ such that the initial condition is attained in the strong sense. 
Moreover, a pair  $(\f v , E )\in  L^\infty(0,T; L^2_\sigma ( \T ))  \times \BV $ is an energy-variational solution in the sense of Definition~\ref{def:envarIncommp} if and only if it is a  dissipative weak solution in the sense of Definition~\ref{def:disssol.incomp}. 
\end{theorem}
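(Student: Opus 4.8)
\textbf{Existence.} We verify Hypothesis~\ref{hypo}, so that Theorem~\ref{thm:main} applies, along the lines of the proof of Theorem~\ref{thm:magneto} (incompressible Euler being the case $\f H\equiv\f 0$) but with the sharper weight~\eqref{eq:regweight.incomp}. Here $\eta(\f y)=\tfrac12\snorm{\f y}^2$, $\eta^\ast(\f z)=\tfrac12\snorm{\f z}^2$, $D\eta^\ast(\f z)=\f z$, and $\domE=\LRsigma2(\T)$ by Remark~\ref{rem:domE}, a linear subspace and hence convex; $\f F(\f v)=\f v\otimes\f v$ satisfies $\snorm{\f F(\f v)}=2\eta(\f v)$, so~\eqref{BoundF} holds, and $\int_\T\f F(D\eta^\ast(\f\varphi)):\nabla\f\varphi\de\f x=-\int_\T\tfrac12\snorm{\f\varphi}^2\di\f\varphi\de\f x=0$ gives~\eqref{eq:integralFentropy}. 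The weight $\mathcal K$ in~\eqref{eq:regweight.incomp} is convex and continuous, since $\f\varphi\mapsto(\nabla\f\varphi)_{\sym}$ is linear and $\f S\mapsto\snorm{(\f S)_{\sym,-}}_2=\max\{0,-\lambda_{\min}(\f S)\}$ is a convex, Lipschitz function of the symmetric matrix $\f S$. Finally, $\int_\T\f v\otimes\f v:\nabla\f\varphi\de\f x=\int_\T\f v^\transpose(\nabla\f\varphi)_{\sym}\f v\de\f x$, and the pointwise inequality $(\nabla\f\varphi(x))_{\sym}+\tfrac12\mathcal K(\f\varphi)I\ge0$ makes the functional in~\eqref{ass:convex} a non-negative quadratic form on $\LRsigma2(\T)$, hence convex and weakly lower semi-continuous. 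Theorem~\ref{thm:main} therefore yields an energy-variational solution with $E(0)=\E(\f v_0)$, and since smooth divergence-free fields are dense in $\LRsigma2(\T)$, Proposition~\ref{prop:betterreg} provides the asserted $\C_w$-regularity and the strong attainment of $\f v_0$.

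\textbf{Dissipative weak $\Rightarrow$ energy-variational.} Given a dissipative weak solution $(\f v,E)$ with Reynolds defect $\reynold$, insert~\eqref{measeq} into the first terms of~\eqref{relenIncomp} and estimate $-\int_s^t\!\int_\T\nabla\f\varphi:\de\reynold$. Splitting $(\nabla\f\varphi)_{\sym}=(\nabla\f\varphi)_{\sym,+}+(\nabla\f\varphi)_{\sym,-}$, positivity of $\reynold$ makes the $(\nabla\f\varphi)_{\sym,+}$-part non-positive, and the bound $\f A:\mu\ge\lambda_{\min}(\f A)\tr\mu$ for $\mu\in\R^{d\times d}_{\sym,+}$ together with~\eqref{measeneq} gives $-\int_\T(\nabla\f\varphi(\tau))_{\sym,-}:\de\reynold(\tau)\le\mathcal K(\f\varphi(\tau))\bigl(E(\tau)-\E(\f v(\tau))\bigr)$. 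Adding $E\big|_s^t\le0$ (valid since $E$ is non-increasing with $E(0+)=\E(\f v_0)$) reproduces~\eqref{relenIncomp}, and $E\ge\E(\f v)$ a.e.\ follows from~\eqref{measeneq} because $\tr\reynold\ge0$.

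\textbf{Energy-variational $\Rightarrow$ dissipative weak.} This is the core step. By Propositions~\ref{prop:reg} and~\ref{prop:betterreg} we may assume $E$ non-increasing, $\f v\in\C_w([0,T];\LRsigma2(\T))$ with $\f v(0)=\f v_0$, and $E(0+)=\E(\f v_0)$ (the definition forces $E(0+)\le\E(\f v_0)$, while $\f v(t)\rightharpoonup\f v_0$ and weak lower semi-continuity of $\E$ force the reverse). As $\mathcal K$ is positively homogeneous of degree one, Proposition~\ref{prop:equality} splits~\eqref{relenIncomp} into ``$E$ non-increasing'' plus the affine inequality in~\eqref{twoineq}. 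Letting $s\downarrow0$, $t\uparrow T$ in the second inequality of~\eqref{twoineq} for $\f\varphi$ in
\[
\mathcal V:=\bigl\{\f\varphi\in\C_0^1(\T\times[0,T);\R^d)\ :\ \di\f\varphi=0,\ {\textstyle\int_\T}\f\varphi\de\f x=0\bigr\}
\]
(using $\f v(0)=\f v_0$ and that $\f\varphi$ vanishes near $T$) yields, with $\mathcal I(\f\varphi):=(\nabla\f\varphi)_{\sym}$ as in Lemma~\ref{lem:hahn}, the estimate $\ell(\f\varphi)\le\mathfrak p(\mathcal I(\f\varphi))$, where
\[
\ell(\f\varphi):=\int_\T\f v_0\cdot\f\varphi(0)\de\f x+\int_0^T\!\!\int_\T\bigl(\f v\cdot\t\f\varphi+\f v\otimes\f v:\nabla\f\varphi\bigr)\de\f x\de\tau
\]
is linear and $\C^1$-continuous on $\mathcal V$, and
\[
\mathfrak p(\Psi):=2\int_0^T\bigl\|(\Psi(\tau))_{\sym,-}\bigr\|_{L^\infty(\T)}\bigl(E(\tau)-\E(\f v(\tau))\bigr)\de\tau
\]
is sublinear and finite on $L^1(0,T;\C(\T;\R^{d\times d}_{\sym}))$, subadditivity following from $\snorm{(\f A+\f B)_{\sym,-}}_2\le\snorm{(\f A)_{\sym,-}}_2+\snorm{(\f B)_{\sym,-}}_2$ and $E-\E(\f v)\ge0$. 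Lemma~\ref{lem:hahn} then produces the Reynolds defect $\reynold\in L^\infty_{w^*}(0,T;\mathcal M(\T;\R^{d\times d}_{\sym}))$, satisfying $\langle-\reynold,\Psi\rangle\le\mathfrak p(\Psi)$ for all $\Psi$ and $\langle-\reynold,\mathcal I(\f\varphi)\rangle=\ell(\f\varphi)$ for all $\f\varphi\in\mathcal V$; the latter identity is the time-integrated weak form of~\eqref{measeq} for zero-mean $\f\varphi$, which Lemma~\ref{lem:invar} (applied with both signs) upgrades to~\eqref{measeq} for a.e.\ $s<t$, including $s=0$ with $\f v(0)=\f v_0$, while the spatially constant test functions (for which $\mathcal K\equiv0$, so~\eqref{twoineq} with $\pm\f\varphi$ forces $\int_\T\f v\de\f x$ to be conserved) supply the non-zero-mean part. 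Testing the majorization against $\Psi=\theta\,\f\xi\otimes\f\xi$ with $\theta\ge0$, $\f\xi\in\R^d$, shows $\reynold$ is positive-semi-definite valued, while against $\Psi=-\theta(\tau)I$ with $\theta\ge0$ independent of $\f x$ it gives $\int_\T I:\de\reynold(\tau)\le2\bigl(E(\tau)-\E(\f v(\tau))\bigr)$ for a.e.\ $\tau$, which is~\eqref{measeneq}. Hence $(\f v,E)$ is a dissipative weak solution.

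\textbf{Main obstacle.} The crux is the construction of $\reynold$: one must choose the sublinear majorant $\mathfrak p$ so that the energy-variational inequality, after the energy balance is peeled off via Proposition~\ref{prop:equality}, furnishes exactly hypothesis~\eqref{est:l} of Lemma~\ref{lem:hahn}, and then read off both positive semi-definiteness and the trace bound~\eqref{measeneq} of $\reynold$ by testing the majorization against suitable matrix fields. The remaining bookkeeping---passing between the time-integrated and the interval formulations via Lemma~\ref{lem:invar}, and splitting off the spatial mean of the test functions---is routine.
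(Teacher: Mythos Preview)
Your proof is correct and follows the same route as the paper: verify Hypothesis~\ref{hypo} and invoke Theorem~\ref{thm:main} (with Proposition~\ref{prop:betterreg}) for existence, then for the forward equivalence use Proposition~\ref{prop:equality} to peel off the energy balance and Lemma~\ref{lem:hahn} with the sublinear majorant $\mathfrak p(\Psi)=2\int_0^T\|(\Psi)_-\|_{L^\infty}\bigl(E-\E(\f v)\bigr)\de t$ to produce $\reynold$, reading off positive semi-definiteness and the trace bound~\eqref{measeneq} by testing against $\theta\,\f\xi\otimes\f\xi$ and $-\phi(t)I$; the reverse direction is the same duality estimate on $\int\nabla\f\psi:\de\reynold$ that the paper gives. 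Your bookkeeping is in fact slightly more explicit than the paper's (invoking Lemma~\ref{lem:invar} to pass between the distributional-in-time and the interval formulations of~\eqref{measeq}).

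One small caveat: your claim that ``the definition forces $E(0+)\le\E(\f v_0)$'' is not justified, since Definition~\ref{def:envarIncommp} does not tie $E(0)$ to $\E(\f v_0)$---the inequality with $\Phi=0$, $s=0$ only gives $E(0+)\le E(0)$. The paper's own proof is equally silent on this point, so the issue is an imprecision in how the ``if and only if'' is stated (Definition~\ref{def:disssol.incomp} demands $E(0+)=\E(\f v_0)$, Definition~\ref{def:envarIncommp} does not) rather than a defect in your strategy.
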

 \begin{proof}
At first, we show that the Hypothesis~\ref{hypo} is fulfilled,
which is very similar to the proof of Theorem~\ref{thm:magneto}. 
To the most extent, we can copy the above proof with $\f H\equiv 0$ or vanishing second component in all functionals. 
But since we assert that
the regularity weight $\mathcal K$ can be chosen in the finer manner
stated in \eqref{eq:regweight.incomp},
it remains to verify the convexity of the function from~\eqref{ass:convex}
with this choice.
Indeed, we have 
 \[
 \begin{aligned}
     &\int_{\T} \f F( \f v ) : \nabla \f \varphi \de \f x 
     + \mathcal K(\f \varphi)\E(\f v)\\
     &= \int_{\T} \f v \otimes \f v : ( \nabla \f \varphi)_{\sym,+} \de \f x
     + \int_{\T} ( \f v \otimes \f v ) : \bb{( \nabla \f \varphi)_{\sym,-} + \| (\nabla \f \varphi )_{\sym,-}\| _{L^\infty(\O;\R^{d\times d})} I} \de \f x \,,
 \end{aligned}
 \]
 where we infer the convexity and weak lower semi-continuity  of both terms in the second line since they are non-negative and quadratic. 
Hence, Hypothesis~\ref{hypo} is satisfied, 
and from Theorem~\ref{thm:main}, we infer the existence of an energy-variational solution. 
 
 Now let~$(\f v ,E)$ be an energy-variational solution in the sense of Definition~\ref{def:envarIncommp}. 
 The choice $ \f \varphi = \f 0$ implies that $ E $ is non-increasing.
 Since the regularity weight $\mathcal K$ is homogeneous of degree one, we infer from Proposition~\ref{prop:equality} that
\begin{align}
- \int_{\T} \f v \cdot \f \psi  \de \f x   \Big|_{0}^T + \int_0^T \int_{\T} \f v \cdot \t \f\psi + ( \f v \otimes \f v ) : \nabla \f \psi \de \f x \de t \leq \int_0^T \mathcal{K}(\f \psi ) [ E - \E(\f v) ] \de t \label{estincomp}
\end{align}
for all $ \f \psi \in \C^1(\T\times[0,T];\R^d)$ with $\dv\f\psi=0$.  
We define
\[
\begin{aligned}
&\mathcal V:= \{\f \varphi\in \C_0^1(\O\times[0,T);\R^d) \mid  \di \f \varphi = 0 \text{ a.e.~in }\T \times (0,T)\,,\  \int_\O\f\varphi\de x =0\}\,,
\\
&\f l \colon\mathcal V\to\R,
\quad
\langle \f l, \f \psi \rangle := - \int_{\T} \f v \cdot \f \psi  \de \f x   \Big|_{0}^T + \int_0^T \int_{\T} \f v \cdot \t \f\psi + ( \f v \otimes \f v ) : \nabla \f \psi \de \f x \de t\,,
\\
&\mathfrak p\colon L^1(0,T;\C(\O ; \R^{d\times d}_{\sym}))  \ra \R,
\quad
\mathfrak p(\Phi) := \int_0^T 2\| (\Phi)_{-}\|_{\C(\T;\R^{d\times d})}(E - \mathcal{E}(\f v))\de t\,.
\end{aligned}
\]
Due to \eqref{estincomp}, Lemma~\ref{lem:hahn} implies 
that there exists
$\mathfrak R \in L^\infty_{w^*}(0,T;\mathcal{M}(\O ; \R_{\sym}^{d\times d}
))$ 
with
\[
\forall\Phi\in L^1(0,T; \C(\O; \R_{\sym}^{d\times d })):\ \langle -\mathfrak R, \Phi\rangle \leq \mathfrak p(\Phi),
\qquad
\forall\f \psi \in  \mathcal{V} : \ \langle -\mathfrak R, \nabla \f \psi \rangle = \langle \f l , \f \psi \rangle.
\]
The first property implies $\langle \mathfrak R, \Phi\rangle \geq 0$ 
if $\Phi$ is positive semi-definite  in $\T\times (0,T)$, 
so that we have $\mathfrak R \in L^\infty_{w^*}(0,T;\mathcal{M}(\O ; \R_{\sym,+}^{d\times d}))$. 
The second property yields \eqref{measeq} 
for $\f \psi\in \mathcal V$.
Using $\f \psi=\f e_j$ in~\eqref{estincomp},
where $\f e_j$ is the $j$-th unit vector in $\R^d$,
we see that $\int_{\O}\f v\de x$ is constant in time.
Therefore, we can drop the mean-value condition on $\f \psi$
and infer \eqref{measeq} for all $\f \psi \in \C^1(\O\times[0,T]; \R ^d )$
with $\dv\f\psi=0$.
Considering
$\Phi(x,t)= - \phi(t)I$ for some $\phi\in\C_0^{1}([0,T))$ with $\phi \geq 0$,
we further have
\[
\int_0^T \phi(t) \int_{\T} I :  \de \reynold(t) \de t
=\langle -\mathfrak R, \Phi\rangle 
\leq \mathfrak p(\Phi)
=2\int_0^T \phi(t) (E - \mathcal{E}(\f v))\de t.
\]
Since $\phi\geq 0$ is arbitrary,  this directly implies \eqref{measeneq} 
for a.a.~$t\in(0,T)$.
In total, we see that $(\f v, E)$ is a dissipative weak solution.

In order to prove the converse implication,  
let  $(\f v , E )\in  L^\infty(0,T; L^2_\sigma ( \T ))  \times \BV $ 
be a dissipative weak solution to \eqref{eq:Incomp}. 
Due to $\mathfrak{R}(t) \in \mathcal{M}(\T;\R^{d\times d}_{\sym,+})$, the duality of the spectral norm and the trace norm for matrices,
H\"older's inequality and inequality~\eqref{measeneq} allow to infer 
\begin{align*}
\int_{\T}  \nabla \f \psi  : \de \mathfrak{R} \geq \int_{\T} ( \nabla \f \psi )_{\sym,-} : \de \mathfrak{R} 
& \geq - \|  ( \nabla \f \psi )_{\sym,-} \|_{L^\infty(\T;\R^{d\times d })} 
\int_{\T}I : \de \mathfrak{R}
\\
& \geq 2 \|  ( \nabla \f \psi )_{\sym,-} \|_{L^\infty(\T;\R^{d\times d })} \left [  \E(\f v ) -E \right ] \,
\end{align*}
a.e.~in $(0,T)$.
Estimating the last term of \eqref{measeq} with $\f\varphi=-\f \psi$ in this way,
we obtain
\[
-\left [ \int_{\T} 
 \f v  \cdot \f \psi
 \de \f x  \right ] \Big|_s^t 
+ \int_s^t \int_{\T}
\f v \cdot \t \f \psi 
+ { \f v  \otimes \f v}  : \nabla \f \psi 
\de \f x 
+ \mathcal{K}(\f \psi) \left [ \E(\f v) - E\right ] 
\de  s
\leq 0
\,.
\]
Since $E$ is non-increasing, 
we may add the term $E\big|_s^t$ to the left-hand side
to infer the formulation~\eqref{relenIncomp}. 
 \end{proof}
 
\begin{remark}[Trace-free measures]
Due to the fact that the equation~\eqref{measeq} holds for solenoidal test functions, one may change the measure $\mathfrak{R}$ in this formulation by adding a multiplicative of the identity. 
This can be done in such a way that the resulting measure $\bar{\mathfrak{R}}$ is trace-free 
by setting $ \bar{\mathfrak{R}}= \mathfrak{R}-\frac{1}{d} \tr(\mathfrak{R})I$. 
Consequently, we could adapt Definition~\ref{def:disssol.incomp} by requiring $ \mathfrak R\in L^\infty_{w}(0,T;\mathcal{M}(\T ; \R^{d\times d}_{\sym,0}))$, where $\R^{d\times d}_{\sym,0} $ denotes the set of symmetric trace-free matrices,
and by demanding the simpler inequality $\E(\f v) \leq E $
instead of inequality~\eqref{measeneq}. 
We could infer this formulation with the same arguments as above, but by choosing $\mathcal{I}(\f \psi) = (\nabla \f \psi)_{\sym} - \frac{1}{d} \tr (\nabla \f \psi ) I$ in Lemma~\ref{lem:hahn}  and  $ \mathfrak p(\Phi) = \int_0^T 2\| \Phi\|_{\C(\T;\R^{d\times d})}(E - \mathcal{E}(\f v))\de t$. 
However, we prefer the choice made in Definition~\ref{def:disssol.incomp} since in inequality~\eqref{measeneq}  the dissipative nature of the Reynolds defect $\mathfrak{R}$ becomes visible. 
\end{remark}

\section{Compressible Euler equations\label{sec:comp}}
\label{sec:comprEuler}
Now, we turn to the compressible Euler system.
Here, instead of formulating the equations
in terms of the density $\dens$ and the fluid velocity $\f v$, 
we use the density and the momentum $\mom=\dens\f v$. 
This is often done in the literature, see for instance~\cite{Fereisl21_NoteLongTimeBehaviorDissSolEuler}.
The main reason for this choice is 
that the associated energy functional is convex in the variables $(\dens,\mom)$
as we will see below.
The Euler equations then read 
\begin{subequations}
\label{eq:comprEuler}
\begin{alignat}{2}
\t \dens + \di  \mom   ={}& 0 &&\quad\text{in  }\T\times (0,T),
\label{eq:comprEuler.mass}
\\
\t \mom   + \di \left ( \frac{\mom \otimes \mom}{\dens} \right ) + \nabla \pres(\dens) ={}& 0 
&&\quad \text{in  } \T \times (0,T),
\label{eq:comprEuler.momentum}
\\
(\dens,\mom)(\cdot,0)={}&(\dens_0,\mom_0)
&&\quad\text{in }\T.
\end{alignat}
\end{subequations}
Here $\dens\colon\T\times(0,T)\to[0,\infty)$ and
and $\mom\colon\T\times(0,T)\to\R^d$
denote the mass density and the momentum field of an inviscid fluid flow,
and the pressure $\pres$ is related to the density $\dens$ by 
a barotropic pressure law $\pres=\pres(\dens)$.
Note that we follow~\cite{weakstrongCompEul} 
and use $h$ for the density variable instead of $\rho $,
which fits to our notation 
to use Latin letters for the state variables 
and Greek letters for the test functions. 

To see that \eqref{eq:comprEuler} belongs to the class of hyperbolic conservation laws
introduced above,
we set
\[
\f F ( \dens,\mom ) =
\begin{pmatrix}
 \mom ^T  \\ 
 \bp{\frac{\mom \otimes \mom }{\dens}  + p(\dens) I}\chi_{(0,\infty)}(\dens)
\end{pmatrix}.
\] 
Then \eqref{eq:comprEuler} is equivalent to \eqref{eq} with 
$ \f U = (\dens , \mom) $. 
The mathematical entropy $\eta$ for the system 
is defined as
\[
\eta (\dens,\mom) 
=\begin{cases} \frac{1}{2}\frac{|\mom |^2}{\dens} 
+ \pot(\dens)
& \text{if }h> 0,\\
0
& \text{if }(h, \f m) = (0,\f 0) ,
\\
\infty & \text{else},
\end{cases}
\]
and 
$\E(\dens,\mom)=\int_\T\eta(\dens(x),\mom(x))\,\dx$
is the total physical energy.
Here $\pot$ denotes the potential energy, which is associated to the 
pressure $\pres$ via
\begin{equation}\label{eq:pot.from.pres}
\pot(\dens)=\dens\int_0^\dens \frac{\pres(z)}{z^2}\de z.
\end{equation}
Vice versa, the pressure $\pres$ can be derived from the potential energy $\pot$ via
\begin{equation}\label{eq:pres.from.pot}
\pres(\dens)=\dens\pot'(h)-\pot(h).
\end{equation}
For conditions ensuring that all expressions in \eqref{eq:pot.from.pres} and
\eqref{eq:pres.from.pot} are well defined,
we refer to \eqref{eq:pres.properties} and 
\eqref{eq:pot.properties} below, respectively.

\subsection{Energy-variational solutions to the compressible Euler equations}
For the sake of convenience, we now transfer Definition~\ref{def:envar} 
to the compressible Euler system,
and express all quantities in the way considered here.
\begin{definition}\label{def:envarEUL*}
A triple 
$(\dens,\mom, E)\in 
L^1_{\mathrm{loc}}(\T\times(0,T);[0,\infty))\times 
L^1_{\mathrm{loc}}(\T\times(0,T);\R^d)
\times \BV$
is called an energy-variational solution to the compressible Euler system 
\eqref{eq:comprEuler} 
if 
$\E (\dens(t) , \mom(t))\leq E(t)$ for a.e.~$t\in (0,T)$
and
if 
the energy-variational inequality
\begin{multline}
\left [ E - \int_{\T} h \rho 
+ \f m  \cdot \f \varphi 
 \de \f x  \right ] \Big|_s^t + \int_s^t 
\int_{\T} h \t \rho
+\f m \cdot  \nabla  \rho 
\de \f x 
\de \tau
\\
+ \int_s^t \int_{\T}
\f m \cdot \t \f \varphi 
+  \left (\frac{ \f m  \otimes \f m}{h} + \pres(\dens)I \right ): \nabla \f \varphi 
\de \f x 
+ \mathcal{K}_\alpha (\rho,\f \varphi ) \left [ \E(h ,\f m) - E\right ] 
\de  \tau
\leq 0
\,\label{relenEul*}
\end{multline}
holds for a.e.~$s<t\in(0,T)$, including $s=0$ with $( \dens (0), \mom (0)) = ( \dens_0 ,\mom_0)$, 
and for all test functions 
$(\rho,\f \varphi )\in \C^1(\T\times [0,T])\times\C^1(\T\times [0,T];\R^m)$,
where $\mathcal K_\alpha $ is given by
\begin{align}
\mathcal{K}_\alpha (\denst,\momt )=\mathcal{K}_\alpha(\momt) =
\max\setl{ 2,\alpha d}
\| (\nabla \f \varphi )_{\sym,-}\| _{L^\infty(\O;\R^{d\times d})}
\label{K:CompEul}\,
\end{align}
for a suitable choice of $\alpha>0 $. 
\end{definition}
\begin{remark}[Choice of regularity weight]
There are different choices possible for the regularity weight~$\mathcal{K}_\alpha$. A finer choice would be given by 
\begin{multline*}
    \tilde{\mathcal{K}}_{\alpha}(\varphi) = \max\setl{ 2\| (\nabla \f \varphi )_{\sym,-}\| _{L^\infty(\O;\R^{d\times d})},\alpha \| (\di \varphi )_{-}\| _{L^\infty(\O;\R)}} 
\\
\leq \max\setl{ 2,\alpha d}
\| (\nabla \f \varphi )_{\sym,-}\| _{L^\infty(\O;\R^{d\times d})} =\mathcal{K}_\alpha(\f\varphi)\,. \end{multline*}
Note that the solution concept is finer for a smaller regularity weight, since the energy-variational inequality~\eqref{envarform} remains valid, if the regularity weight increases (\textit{cf.}~\cite[Prop.~4.4]{EiHoLa22}). 
Nevertheless, we use the  above choice
since it yields the equivalence to dissipative weak solutions; see Theorem~\ref{thm:measval.CompEul} below. 
\end{remark}

To show existence of
 energy-variation solutions to \eqref{eq:comprEuler},
we restrict the class of 
admissible pressure laws and assume that $\pres$ is of the form
$\pres(\dens)=a\dens^\gamma$.
Then we show the following result.

\begin{theorem}\label{thm:exCompEul}
Let $\pres(\dens)=a\dens^\gamma$ for some $a>0$, $\gamma>1$,
and set $q=2\gamma/(1+\gamma)$ and $\alpha =\gamma-1$.
For every initial data $ ( h_0, \f m_0)\in L^1_{\mathrm{loc}}(\T;\R^{d+1})$
with $\E( h_0, \f m_0)<\infty$
there exists an energy-variational solution
\[
(\dens,\mom,E)\in \C_w([0,T]; L^\gamma ( \T )) \times \C_w([0,T]; L^q (\T ;\R^d )) \times \BV
\]
to the compressible Euler equations~\eqref{eq:comprEuler} in the sense of Definition~\ref{def:envarEUL*} with
 $E(0)=\mathcal{E}(\dens_0, \mom_0)$ and
 such that the initial conditions are attained in the strong sense. 
\end{theorem}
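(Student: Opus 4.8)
The plan is to verify that the compressible Euler system, written in the variables $(\dens,\mom)$ with the entropy $\eta$ and flux $\f F$ specified above, satisfies Hypothesis~\ref{hypo}, and then invoke Theorem~\ref{thm:main} together with Proposition~\ref{prop:reg} and Proposition~\ref{prop:betterreg} to obtain the asserted solution and regularity. The bulk of the work is thus the verification of the four structural conditions in Hypothesis~\ref{hypo}: strict convexity, lower semicontinuity and superlinear growth of $\eta$; convexity of $\domE$; the growth bound~\eqref{BoundF}; the integral identity~\eqref{eq:integralFentropy}; and the convexity/lower semicontinuity/non-negativity of the map in~\eqref{ass:convex} with the choice $\mathcal K_\alpha$ from~\eqref{K:CompEul} and $\alpha=\gamma-1$.

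First I would treat $\eta$. Strict convexity of $(\dens,\mom)\mapsto \tfrac12|\mom|^2/\dens + \pot(\dens)$ on $\{\dens>0\}$ follows because $(\dens,\mom)\mapsto |\mom|^2/\dens$ is the perspective of the strictly convex map $\mom\mapsto|\mom|^2$ and $\pot(\dens)=\tfrac{a}{\gamma-1}\dens^\gamma$ is strictly convex for $\gamma>1$; lower semicontinuity and the value $\eta(\f 0)=0$ are immediate, and the $+\infty$ extension preserves convexity and closedness. Superlinear growth~\eqref{eq:superlin.growth} holds because along any direction with $\dens$ bounded the term $|\mom|^2/\dens$ grows quadratically, while along directions with $\dens\to\infty$ the term $\dens^\gamma$ dominates with $\gamma>1$. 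For $\domE$: one computes $\eta^\ast(\rho,\f\varphi)$ explicitly — it is finite everywhere, $\eta^\ast(\rho,\f\varphi)=c_\gamma(\rho+\tfrac12|\f\varphi|^2)_+^{\gamma'}$ for the dual exponent $\gamma'=\gamma/(\gamma-1)$ — so $D\eta^\ast$ is a continuous map from $\R^{d+1}$ into $\{\dens\ge 0\}\times\R^d$, and since $\Y=\C^1(\T;\R^{d+1})$ here one argues as in Remark~\ref{rem:domE} that $\domE=\dom\E$, which is convex. The bound~\eqref{BoundF}: with $\pres(\dens)=a\dens^\gamma$ one has $|\f F(\dens,\mom)|\lesssim |\mom| + |\mom|^2/\dens + \dens^\gamma$, and Young's inequality bounds $|\mom|\lesssim |\mom|^2/\dens + \dens \lesssim \eta(\dens,\mom)+\dens^\gamma+1 \lesssim \eta(\dens,\mom)+1$, using $\dens^\gamma\lesssim\pot(\dens)$.

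The identity~\eqref{eq:integralFentropy} is best checked via the entropy-flux reformulation~\eqref{eq:entropyflux.new}: one exhibits $\tilde{\f q}\colon\R^{d+1}\to\R^d$ with $\tilde{\f q}\circ D\eta^\ast\in\C^1$ and $\f F(D\eta^\ast(\f z))=D[\tilde{\f q}\circ D\eta^\ast](\f z)$; the natural candidate is the physical entropy flux $\tilde{\f q}(\dens,\mom)=\mom\big(\tfrac12|\mom|^2/\dens^2+\pot'(\dens)\big)$, and one verifies that the composition with $D\eta^\ast$ is $\C^1$ even though $\eta^\ast$ itself is only $\C^1$ (this is exactly the subtlety flagged in Remark~\ref{rem:integralcondition}). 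For~\eqref{ass:convex} one estimates, as in the incompressible case, $\big|\int_\T \f F(\dens,\mom):\nabla\f\varphi\,\dx\big|$: the momentum-momentum block $\mom\otimes\mom/\dens$ contributes $\le \|(\nabla\f\varphi)_{\sym,-}\|_{L^\infty}\int_\T|\mom|^2/\dens$ after pairing against $(\nabla\f\varphi)_{\sym,+}$ on one side, the pressure block $\pres(\dens)I=a\dens^\gamma I$ contributes $\le \|(\di\f\varphi)_-\|_{L^\infty}\int_\T a\dens^\gamma = (\gamma-1)\|(\di\f\varphi)_-\|_{L^\infty}\int_\T\pot(\dens)$, and the density row $\mom^T$ pairs against $\nabla\rho$, which is a linear term and harmless; collecting these against $\E = \int_\T(\tfrac12|\mom|^2/\dens+\pot(\dens))$ gives the bound by $\mathcal K_\alpha(\f\varphi)[\cdots]$ with $\alpha=\gamma-1$, and since the resulting integrand $\f F:\nabla\f\varphi + \mathcal K_\alpha\,\eta$ is a non-negative convex function of $(\dens,\mom)$ (sum of a perspective-type term and a power term), it is weakly lower semicontinuous on $L^1$.

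Having checked Hypothesis~\ref{hypo}, Theorem~\ref{thm:main} yields an energy-variational solution with $E(0+)=\E(\dens_0,\mom_0)$. The stated Lebesgue regularity $\dens\in L^\infty(0,T;L^\gamma)$, $\mom\in L^\infty(0,T;L^q)$ with $q=2\gamma/(1+\gamma)$ follows from $\E(\dens(t),\mom(t))\le E(t)\le E(0)$: the energy bound gives $\dens\in L^\gamma$ directly, and $\|\mom\|_{L^q}\le \|\,|\mom|^2/\dens\,\|_{L^1}^{1/2}\|\dens\|_{L^\gamma}^{1/2}$ by Hölder with the conjugate triple $(2,2,\cdot)$ adapted to $q$. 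Finally, the weak-in-time continuity $(\dens,\mom)\in\C_w([0,T];L^\gamma\times L^q)$ and the strong attainment of the initial data come from Proposition~\ref{prop:betterreg}, whose hypothesis (that $\langle\f V-\f W,\Phi\rangle=0$ for all $\Phi\in\Y=\C^1(\T;\R^{d+1})$ forces $\f V=\f W$) is trivially satisfied here by density of $\C^1$ in $L^{\gamma'}\times L^{q'}$.

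\textbf{Main obstacle.} I expect the delicate point to be the regularity claim~\eqref{eq:entropyflux.new} — producing $\tilde{\f q}$ with $\tilde{\f q}\circ D\eta^\ast\in\C^1(\R^{d+1};\R^d)$ — because $\eta^\ast$ fails to be $\C^2$ where $\rho+\tfrac12|\f\varphi|^2$ vanishes (this is the $\dens=0$ vacuum set), so one must check the $\C^1$-regularity of the composition by hand rather than by the naive chain rule; the matching degeneracy of $\tilde{\f q}$ near the vacuum is what saves it. A secondary technical nuisance is keeping all the vacuum conventions ($\chi_{(0,\infty)}(\dens)$, the value of $\eta$ and $\f F$ at $(0,\f 0)$) consistent so that the integrals in~\eqref{relenEul*} are well-defined, but this is handled uniformly by the superlinear-growth/equi-integrability machinery of Lemma~\ref{lem:delavalle}.
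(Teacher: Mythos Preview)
Your plan is essentially the paper's: verify Hypothesis~\ref{hypo} via the sequence of lemmas (strict convexity and superlinear growth of $\eta$, $\domE=\dom\E$, the bound~\eqref{BoundF}, the integral identity~\eqref{eq:integralFentropy}, and convexity/non-negativity of the map~\eqref{ass:convex} with $\mathcal K_\alpha$), then invoke Theorem~\ref{thm:main} and Propositions~\ref{prop:reg}--\ref{prop:betterreg}, and finally extract the $L^\gamma\times L^q$ bounds from the energy. The paper organises the verification into Lemmas~\ref{lem:eta.comprEuler}, \ref{lem:etastar.comprEuler}, \ref{lem:integralcond.comprEuler} and \ref{lem:nonlin.convex.compEuler}; your sketch hits the same points, and your decomposition of the map~\eqref{ass:convex} into a perspective term with positive semi-definite matrix and a power term with non-negative coefficient is exactly what Lemma~\ref{lem:nonlin.convex.compEuler} does.

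One correction: your candidate $\tilde{\f q}(\dens,\mom)=\mom\bigl(\tfrac12|\mom|^2/\dens^2+\pot'(\dens)\bigr)$ for the relation~\eqref{eq:entropyflux.new} is wrong. The correct choice is $\tilde{\f q}(\dens,\mom)=\frac{\mom}{\dens}\,\pres(\dens)$, for which $\tilde{\f q}\circ D\eta^\ast(\rho,\f\varphi)=\f\varphi\,\eta^\ast(\rho,\f\varphi)$; since $\eta^\ast(\rho,\f\varphi)=c_\gamma(\rho+\tfrac12|\f\varphi|^2)_+^{\gamma/(\gamma-1)}$ is $\C^1$ (the exponent exceeds $1$), the composition is manifestly $\C^1$ and your ``main obstacle'' dissolves. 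The paper verifies~\eqref{eq:integralFentropy} both by this route (Remark~\ref{rem:entropyflux}) and by the direct divergence computation $\f F(D\eta^\ast(\rho,\f\varphi)):\nabla(\rho,\f\varphi)^T=\dv[\f\varphi\,\eta^\ast(\rho,\f\varphi)]$ (Lemma~\ref{lem:integralcond.comprEuler}); either is fine once you use the right $\tilde{\f q}$.
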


\subsection{Existence of energy-variational solutions}

To prove existence of an energy-variational solution,
we show that all assertions
of Theorem \ref{thm:main} are satisfied.
Actually, most of them
can be shown for more general pressure laws 
than those in the statement of Theorem \ref{thm:exCompEul}.
For the moment, we shall merely assume that the potential energy $\pot$
satisfies
\begin{subequations}\label{eq:pot.properties}
\begin{align}
\pot\in\C^1[0,\infty)\cap\C^2(0,\infty),
\qquad
&\pot''(z)>0 \text{ for all } z>0,
\label{eq:pot.regularity}
\\
\lim_{z\to\infty} \frac{\pot(z)}{z}=\infty,
\qquad
&\pot(0)=\pot'(0)=0.
\label{eq:pot.limits}
\end{align}
\end{subequations}
In particular, $\pot$ is a strictly convex function with superlinear growth,
and $\pres$ is well defined via \eqref{eq:pres.from.pot}.

\begin{remark}
\label{rem:pressurelaw}
In \eqref{eq:pot.properties}
we introduced assumptions on the potential energy $\pot$,
while in the literature it is much more common to 
state assumptions on the pressure $\pres$ directly.
To guarantee \eqref{eq:pot.regularity} and \eqref{eq:pot.limits},
one may assume that $\pres$ satisfies
\begin{subequations}\label{eq:pres.properties}
\begin{align}
\pres\in\C^0[0,\infty)\cap\C^1(0,\infty),
\qquad
&\pres'(z)>0 \text{ for all } z>0, 
\label{eq:pres.regularity}
\\
\lim_{z\to\infty} \frac{\pres(z)}{z}=\infty,
\qquad
&\int_0^1 \frac{\pres(z)}{z^2}\,\de z <\infty.
\label{eq:pres.limits}
\end{align}
\end{subequations}
One readily verifies that then the right-hand side of 
\eqref{eq:pot.from.pres} is well defined, 
and that \eqref{eq:pot.regularity} and \eqref{eq:pres.regularity}
are equivalent since $\pot''(z)=\pres'(z)/z$.
Moreover, the second condition in \eqref{eq:pres.limits}
implies $\lim_{z\to 0} \pres(z)/z =0$,
and it is equivalent to the second condition in \eqref{eq:pot.limits},
which follows with the identity
\[
\int_0^1 \frac{\pres(z)}{z^2}\de z
=\int_0^1\ddz\bb{\frac{\pot(z)}{z}}\de z
=\pot(1)-\lim_{z\to0} \frac{\pot(z)}{z}
=\pot(1)-\pot'(0). 
\]
Additionally, superlinear growth of $\pres$ implies superlinear growth of $\pot$.
Indeed, the first condition in \eqref{eq:pot.limits} yields the existence of $\dens_0>0$ 
such that $\pres(z)\geq z$ for all $z\geq\dens_0$,
whence we have
\[
\frac{\pot(\dens)}{\dens}
\geq \int_0^{\dens_0} \frac{\pres(z)}{z^2}\de z
+\int_{\dens_0}^\dens \frac{1}{z}\de z
=\int_0^{\dens_0} \frac{\pres(z)}{z^2}\de z
+\log(\dens)-\log(\dens_0)
\to\infty
\]
as $h\to\infty$.
However, the converse is not true. 
For example, the function $\pot(\dens)=(1+\dens)\log(1+\dens)-\dens$
satisfies \eqref{eq:pot.regularity} and \eqref{eq:pot.limits},
but the associated pressure $\pres(\dens)=\dens-\log(1+\dens)$ does not 
have superlinear growth.
Therefore, the assumption 
on the potential $\pot$ in \eqref{eq:pot.properties}
are less restrictive
than the assumptions
on the pressure $\pres$ in \eqref{eq:pres.properties}, 
which explains why we work with the former in what follows.
\end{remark}

We separate the proof into several lemmas,
the first one concerns properties of $\eta$.

\begin{lemma}\label{lem:eta.comprEuler}
If $\pot$ satisfies \eqref{eq:pot.properties},
then $\eta$ is strictly convex and has superlinear growth.
\end{lemma}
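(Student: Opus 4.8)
I want to show that $\eta(\dens,\mom)=\frac{1}{2}\snorm{\mom}^2/\dens+\pot(\dens)$ (with the usual extension by $0$ at the origin and $+\infty$ otherwise) is strictly convex on $\R^{d+1}$ and has superlinear growth, given the assumptions \eqref{eq:pot.properties} on $\pot$. The key structural fact is that the map $(\dens,\mom)\mapsto \snorm{\mom}^2/\dens$ is the \emph{perspective function} of the convex function $\mom\mapsto\snorm{\mom}^2$, and perspective functions of convex functions are jointly convex on $(0,\infty)\times\R^d$. So the first step is to record this: write $g(\mom)=\snorm{\mom}^2$, note $g$ is convex, and recall (or verify directly by a two-line computation) that its perspective $\tilde g(\dens,\mom)=\dens\,g(\mom/\dens)=\snorm{\mom}^2/\dens$ is convex on the open half-space $\{\dens>0\}$. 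Adding the convex function $(\dens,\mom)\mapsto\pot(\dens)$ (convex in $\dens$ by \eqref{eq:pot.regularity}, hence convex as a function on $\R^{d+1}$) keeps convexity on $\{\dens>0\}$; and since $\eta=+\infty$ off the closed half-space and $\eta(0,\f 0)=0$ with all nearby values tending to $+\infty$, lower semicontinuity and convexity extend to all of $\R^{d+1}$. Actually the cleanest route for the boundary behaviour is to observe that $\eta$ is the lower-semicontinuous hull, i.e.\ $\eta(\dens,\mom)=\liminf$ along the half-space, and that the value $0$ at the origin is exactly what makes $\eta$ convex and lsc there; alternatively, one can exhibit $\eta$ as a supremum of affine functions.

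\textbf{Strict convexity.} For strictness I would argue on a segment $(\dens_\lambda,\mom_\lambda)=\lambda(\dens_0,\mom_0)+(1-\lambda)(\dens_1,\mom_1)$ with the two endpoints distinct and both in $\dom\eta$. If both endpoints lie in $\{\dens>0\}$, then either $\dens_0\neq\dens_1$, in which case the strict convexity of $\pot$ on $(0,\infty)$ already gives a strict inequality from the $\pot(\dens)$ term, or $\dens_0=\dens_1=\dens>0$ and $\mom_0\neq\mom_1$, in which case $\eta$ restricted to $\{\dens\}\times\R^d$ is $\dens^{-1}(\text{const})+\frac{1}{2\dens}\snorm{\mom}^2$, strictly convex in $\mom$. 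The only remaining case is an endpoint at the origin, say $(\dens_0,\mom_0)=(0,\f 0)$ and $(\dens_1,\mom_1)$ with $\dens_1>0$: then along the open segment $\dens_\lambda>0$, and I compare $\eta(\dens_\lambda,\mom_\lambda)$ against $(1-\lambda)\eta(\dens_1,\mom_1)$. The momentum part is $1$-homogeneous of degree $1$ along this ray, hence affine and not strictly convex by itself, so the strictness must again come from $\pot$: one needs $\pot(\lambda\cdot 0+(1-\lambda)\dens_1)<\lambda\pot(0)+(1-\lambda)\pot(\dens_1)$, i.e.\ $\pot((1-\lambda)\dens_1)<(1-\lambda)\pot(\dens_1)$, which holds because $\pot$ is strictly convex on $[0,\infty)$ with $\pot(0)=0$ (strictness on the closed interval follows from $\pot''>0$ on the interior plus continuity at $0$). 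I would spell out this last inequality carefully since it is the one place where the degenerate geometry at the origin could bite.

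\textbf{Superlinear growth.} I need $\eta(\dens,\mom)/\snorm{(\dens,\mom)}\to\infty$ as $\snorm{(\dens,\mom)}\to\infty$. Split into regimes. If $\snorm{\mom}\ge\dens$ along a sequence going to infinity, then $\snorm{(\dens,\mom)}\le\sqrt 2\,\snorm{\mom}$ and $\eta\ge\frac12\snorm{\mom}^2/\dens\ge\frac12\snorm{\mom}$, which is not yet enough, so I refine: when $\dens$ stays bounded, $\snorm{\mom}^2/\dens\ge c\snorm{\mom}^2$ beats any linear term; when $\dens\to\infty$ as well but $\dens\le\snorm{\mom}$, then $\eta\ge\frac12\snorm{\mom}^2/\dens\ge\frac12\snorm{\mom}\ge\frac12\,\frac{1}{\sqrt2}\snorm{(\dens,\mom)}$ — still only linear, so in this sub-case I instead use $\eta\ge\pot(\dens)$ together with the superlinear growth of $\pot$: hmm, but $\dens$ could be much smaller than $\snorm{\mom}$. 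The honest fix: on the set $\dens\le\sqrt{\snorm{\mom}}$ one has $\snorm{\mom}^2/\dens\ge\snorm{\mom}^{3/2}$, which is superlinear in $\snorm{\mom}\sim\snorm{(\dens,\mom)}$; on the complementary set $\dens>\sqrt{\snorm{\mom}}$ one has $\dens^2>\snorm{\mom}$, so $\snorm{(\dens,\mom)}\le\sqrt2\,\max(\dens,\dens^2)$, and $\eta\ge\pot(\dens)$ with $\pot(\dens)/\dens\to\infty$ handles it — I'd need to check the bookkeeping so that $\pot(\dens)$ dominates $\snorm{(\dens,\mom)}\lesssim \dens^2$, which requires $\pot(\dens)/\dens^2\to\infty$; that is \emph{not} implied by \eqref{eq:pot.limits}. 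So the correct split is $\dens\le\snorm{\mom}^{2/3}$ versus $\dens>\snorm{\mom}^{2/3}$: in the first, $\snorm{\mom}^2/\dens\ge\snorm{\mom}^{4/3}$; in the second, $\dens^3>\snorm{\mom}^2$ so $\snorm{\mom}<\dens^{3/2}$ and $\snorm{(\dens,\mom)}\lesssim\dens^{3/2}$, while... this still needs $\pot(\dens)\gg\dens^{3/2}$. I expect the genuine obstacle is precisely this interplay: one must choose the threshold exponent so that the kinetic term's gain and the available growth of $\pot$ (only superlinear, not super-quadratic) together cover every regime. The resolution is to let the threshold depend on how fast $\pot$ grows — for any $M$, pick $R_M$ with $\pot(z)\ge Mz$ for $z\ge R_M$ and $\psi(\mom):=\snorm{\mom}^2$ with $\snorm{\mom}^2/\dens\ge M\snorm{\mom}$ whenever $\dens\le\snorm{\mom}/M$ — then the three zones $\{\dens\le\snorm{\mom}/M\}$, $\{\snorm{\mom}/M<\dens,\ \dens\ge R_M\}$, $\{\snorm{\mom}/M<\dens<R_M\}$ (the last being bounded, hence irrelevant as $\snorm{(\dens,\mom)}\to\infty$) each give $\eta\gtrsim M\snorm{(\dens,\mom)}$ up to a fixed constant, and letting $M\to\infty$ finishes. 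I would present this $M$-dependent three-zone argument as the proof of superlinear growth, and flag it as the only non-routine part.
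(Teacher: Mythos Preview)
Your convexity argument is careful and correct --- in fact more complete than the paper's, which simply says ``the strict convexity of $\eta$ directly follows by computing the second derivatives'' and does not explicitly address the segment through the origin. Your treatment of that boundary case (reducing to $\pot((1-\lambda)\dens_1)<(1-\lambda)\pot(\dens_1)$ via strict convexity of $\pot$ with $\pot(0)=0$) is exactly what is needed there.

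Your superlinear-growth argument, however, has a real bookkeeping gap in Zone~2. With your choices, Zone~2 is $\{\snorm{\mom}<M\dens,\ \dens\ge R_M\}$, so $\snorm{(\dens,\mom)}\le \dens\sqrt{1+M^2}$, while you only know $\eta\ge \pot(\dens)\ge M\dens$. This gives
\[
\frac{\eta(\dens,\mom)}{\snorm{(\dens,\mom)}}\ge \frac{M}{\sqrt{1+M^2}}\ \longrightarrow\ 1\quad(M\to\infty),
\]
which is merely linear, not superlinear. The difficulty is that your Zone~1/Zone~2 boundary moves with $M$ in a way that lets $\snorm{\mom}$ be as large as $M\dens$, eating the gain from $\pot(\dens)\ge M\dens$. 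An easy repair is to pick $R_M$ so that $\pot(z)\ge M^2 z$ for $z\ge R_M$ (still possible by superlinearity of $\pot$); then Zone~2 yields $\eta\ge M^2\dens\ge \tfrac{M^2}{1+M}\snorm{(\dens,\mom)}\sim M\snorm{(\dens,\mom)}$, and your three-zone scheme goes through.

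For comparison, the paper avoids the zone bookkeeping entirely by a sequence argument: take $(\dens_n,\mom_n)$ with $\snorm{(\dens_n,\mom_n)}\to\infty$, and (up to subsequences) split on whether $\snorm{\mom_n}/\dens_n\to\infty$ or stays bounded. In the first case
\[
\frac{\eta}{\snorm{(\dens_n,\mom_n)}}\ge \frac{\snorm{\mom_n}}{\sqrt{\dens_n^2+\snorm{\mom_n}^2}}\cdot\frac{\snorm{\mom_n}}{2\dens_n}\to\infty;
\]
in the second, $\dens_n\to\infty$ and the prefactor $\dens_n/\sqrt{\dens_n^2+\snorm{\mom_n}^2}$ is bounded below away from zero, so $\pot(\dens_n)/\dens_n\to\infty$ suffices. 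This ratio dichotomy is shorter and sidesteps the threshold-matching issue you ran into.
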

\begin{proof}
By \eqref{eq:pot.regularity}, we have $\pot''>0$,
and the strict convexity of $\eta$ directly follows by computing the second derivatives.
To show the superlinear growth, let $(\dens_n,\mom_n)\subset\R\times\R^n$ be a sequence 
with $\snorm{(\dens_n,\mom_n)}\to\infty$.
If $\dens_n\to-\infty$ as $n\to\infty$,
then we clearly have $\eta(\dens_n,\mom_n)/\snorm{(\dens_n,\mom_n)}\to\infty$.
So we may assume $\dens_n>0$ in the following.
If $\lim_{n\to\infty}\snorm{\mom_n}/\dens_n= \infty$,
then
\[
\frac{\eta(\dens_n,\mom_n)}{\snorm{\np{\dens_n,\mom_n}}}
\geq \frac{\snorm{\mom_n}^2}{2\dens_n\,\snorm{\np{\dens_n,\mom_n}}}
=\frac{\snorm{\mom_n}}{\sqrt{\dens_n^2+\snorm{\mom_n}^2}}\frac{\snorm{\mom_n}}{2\dens_n} 
\to \infty
\]
as $n\to\infty$;
if
$\liminf_{n\to\infty}\snorm{\mom_n}/\dens_n= c \geq 0$,
then we have $h_n\to\infty$ and
\[
\frac{\eta(\dens_n,\mom_n)}{\snorm{\np{\dens_n,\mom_n}}}
\geq \frac{P(\dens_n)}{\snorm{\np{\dens_n,\mom_n}}}
=\frac{\dens_n}{\sqrt{\dens_n^2+\snorm{\mom_n}^2}} 
\frac{\pot(h_n)}{h_n} \to \infty
\]
as $n\to\infty$ due to \eqref{eq:pot.limits}.
In total, this completes the proof.
\end{proof}

Next we calculate the convex conjugate $\eta^*$ of $\eta$.
To this end, we use that $P'$ is an invertible mapping.

\begin{lemma}\label{lem:etastar.comprEuler}
If $\pot$ satisfies \eqref{eq:pot.properties}, 
then the mapping $Q\coloneqq P'$ is strictly increasing
and a bijective self-mapping on $[0,\infty)$.
The convex conjugate $\eta^*$ of $\eta$ is given by
\begin{equation}\label{eq:etastar.comprEuler}
\eta^\ast ( \denst , \momt  )
=\int_0^{\bp{\denst+\frac{\snorm{\momt}^2}{2}}_+} Q^{-1}(z)\,\dz
=\pres\circ Q^{-1}\Bp{\Bp{\denst+\frac{\snorm{\momt}^2}{2}}_+},
\end{equation}
and it holds
\begin{equation}\label{eq:Detastar.comprEuler}
D\eta^\ast ( \denst , \momt  )
= Q^{-1}\Bp{\Bp{\denst+\frac{\snorm{\momt}^2}{2}}_+}\begin{pmatrix}
1\\
\f \varphi 
\end{pmatrix}
\end{equation}
where $z_+\coloneqq\max\set{z,0}$ for $z\in\R$.
\end{lemma}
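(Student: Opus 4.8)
The plan is to compute the convex conjugate of $\eta$ by a direct variational calculation, splitting off the $\momt$-variable first. First I would establish the claimed properties of $Q = P'$: by \eqref{eq:pot.regularity} we have $Q' = P'' > 0$ on $(0,\infty)$, so $Q$ is strictly increasing and continuous on $[0,\infty)$; by \eqref{eq:pot.limits} we have $Q(0)=P'(0)=0$, and $\lim_{z\to\infty}Q(z)=\infty$ because $P$ has superlinear growth (a convex function with superlinear growth has $P'(z)\to\infty$). Hence $Q\colon[0,\infty)\to[0,\infty)$ is a continuous strictly increasing bijection, and $Q^{-1}$ is well defined, continuous and strictly increasing on $[0,\infty)$.

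Next I would carry out the Legendre computation. Fix $(\denst,\momt)\in\R\times\R^d$. By definition
\[
\eta^\ast(\denst,\momt) = \sup_{(\dens,\mom)}\bb{\denst\dens + \momt\cdot\mom - \eta(\dens,\mom)},
\]
and since $\eta(\dens,\mom)=+\infty$ unless $\dens>0$ or $(\dens,\mom)=(0,\f0)$, the supremum runs effectively over $\dens>0$, $\mom\in\R^d$ (the point $(0,\f0)$ contributes $0$). For fixed $\dens>0$, optimizing over $\mom$ in $\momt\cdot\mom - \frac{|\mom|^2}{2\dens}$ gives $\mom = \dens\momt$ and value $\frac{\dens|\momt|^2}{2}$. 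Substituting, the problem reduces to
\[
\eta^\ast(\denst,\momt)=\sup_{\dens>0}\bb{\dens\Bp{\denst+\tfrac{|\momt|^2}{2}} - \pot(\dens)} \vee 0 = \pot^\ast\!\Bp{\denst+\tfrac{|\momt|^2}{2}}
\]
(with the convention that the supremum is $0$ when the argument is non-positive, since $\pot(0)=0$ and $\pot'(0)=0$ force $\pot^\ast(s)=0$ for $s\le0$). I would then identify $\pot^\ast$: since $\pot$ is strictly convex and $C^1$ on $(0,\infty)$ with $\pot'=Q$ bijective onto $[0,\infty)$, for $s>0$ the maximizer solves $Q(\dens)=s$, i.e. $\dens=Q^{-1}(s)$, and by the Fenchel identity $\pot^\ast(s)=s\,Q^{-1}(s)-\pot(Q^{-1}(s))$. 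Using \eqref{eq:pres.from.pot}, i.e. $\pres(\dens)=\dens\pot'(\dens)-\pot(\dens)$, with $\dens=Q^{-1}(s)$ gives $\pot^\ast(s)=\pres(Q^{-1}(s))$. Alternatively, differentiating $\pot^\ast$ and using $(\pot^\ast)'=Q^{-1}$ yields $\pot^\ast(s)=\int_0^s Q^{-1}(z)\,\dz$ since $\pot^\ast(0)=0$; these two expressions coincide and give exactly \eqref{eq:etastar.comprEuler} after inserting $s=(\denst+\tfrac{|\momt|^2}{2})_+$.

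Finally, for \eqref{eq:Detastar.comprEuler} I would differentiate. By Lemma~\ref{lem:convex}, $\eta^\ast$ is globally defined and $C^1$ (this is where strict convexity and superlinear growth of $\eta$, established in Lemma~\ref{lem:eta.comprEuler}, enter), so $D\eta^\ast$ exists everywhere. For $\denst+\tfrac{|\momt|^2}{2}>0$ the chain rule applied to $\eta^\ast(\denst,\momt)=\pot^\ast(\denst+\tfrac{|\momt|^2}{2})$ with $(\pot^\ast)'=Q^{-1}$ gives $D\eta^\ast=(\pot^\ast)'(\denst+\tfrac{|\momt|^2}{2})\,(1,\momt)^T = Q^{-1}(\denst+\tfrac{|\momt|^2}{2})(1,\momt)^T$; on the open set where $\denst+\tfrac{|\momt|^2}{2}<0$ the function $\eta^\ast$ is constantly $0$ so $D\eta^\ast=0$, which matches since $Q^{-1}(0)=0$; and continuity of $D\eta^\ast$ (already known) handles the boundary $\denst+\tfrac{|\momt|^2}{2}=0$. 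I expect the only mildly delicate point to be the bookkeeping of the non-positive regime of the argument — ensuring the formula with $(\cdot)_+$ is consistent across $s\le 0$ — but this is routine once one notes $Q^{-1}(0)=0$ and $\pot^\ast\equiv0$ on $(-\infty,0]$.
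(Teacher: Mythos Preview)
Your proof is correct and close in spirit to the paper's, but the organization differs enough to be worth noting. You compute $\eta^\ast$ directly by optimizing first in $\mom$ (getting $\mom=\dens\momt$) and then recognizing the remaining one-variable problem as the Legendre transform $\pot^\ast$ of $\pot$, which you then identify via the maximizer $\dens=Q^{-1}(s)$ and the relation $\pres(\dens)=\dens\pot'(\dens)-\pot(\dens)$; finally you obtain $D\eta^\ast$ by differentiating. The paper instead treats the two regimes separately: for $\denst+\tfrac{|\momt|^2}{2}\le 0$ it uses Young's inequality to show the supremum defining $\eta^\ast$ is $\le 0$ (hence $=0$), and for $\denst+\tfrac{|\momt|^2}{2}>0$ it computes $D\eta$ explicitly and inverts it to obtain $D\eta^\ast$, recovering $\eta^\ast$ afterwards. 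Your route through $\pot^\ast$ is slightly more systematic and makes the structure $\eta^\ast(\denst,\momt)=\pot^\ast\bp{\denst+\tfrac{|\momt|^2}{2}}$ transparent; the paper's route avoids introducing $\pot^\ast$ but at the cost of an ad hoc case split. Both are routine Legendre computations and arrive at the same formulas.
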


\begin{proof}
By \eqref{eq:pot.properties},
the function $Q=\pot'$ is strictly increasing and continuous on $[0,\infty)$
with $Q(0)=0$,
and the convexity of $\pot$ yields
\[
Q(z)=\pot'(z)\geq\frac{\pot(z)}{z}\to \infty
\]
as $z\to\infty$.
Therefore, $Q$ is a bijective self-mapping on $[0,\infty)$
with inverse $Q^{-1}$.  
The second equality in \eqref{eq:etastar.comprEuler} is now
 a direct consequence of the identity
\[
\ddz p(Q^{-1}(z))
= p'(Q^{-1}(z)) (Q^{-1})'(z)
= p'(Q^{-1}(z)) \frac{1}{Q'(Q^{-1}(z))}
= Q^{-1}(z),
\]
where we used $Q'(z)=\pot''(z)=\pres'(z)/z$.
To verify the first equality in \eqref{eq:etastar.comprEuler},
consider the case $\denst+\frac{\snorm{\momt}^2}{2}\leq0$ at first.
We employ Young's inequality to estimate
\[
\dens\,\denst + \mom\cdot\momt - \eta(\dens,\mom)
\leq -\dens\frac{\snorm{\momt}^2}{2} + \frac{\snorm{\mom}^2}{2\dens} 
+ \frac{\dens\snorm{\momt}^2}{2} - \eta(\dens,\mom)
\leq 0,
\]
which shows $\eta^\ast ( \rho , \f \varphi  ) \leq 0$.
Since we also have $\eta^\ast\geq 0$, 
we infer $\eta^\ast ( \rho , \f \varphi  )=0$,
which is \eqref{eq:etastar.comprEuler} 
if $\denst+\frac{\snorm{\momt}^2}{2}\leq0$.
This also implies \eqref{eq:Detastar.comprEuler} in this case.
If $\denst+\frac{\snorm{\momt}^2}{2}>0$, then $(\denst,\momt)$ 
belongs to the range of $D\eta$, which is given by
\[
D \eta (\dens,\mom) 
= \left( -\frac{\snorm{\mom}^2}{2\dens^2}+Q(\dens), \frac{\mom}{\dens} \right)
\]
for $\dens>0$, $\mom\in\R^d$.
Computing the inverse, we arrive at \eqref{eq:Detastar.comprEuler}
in this case.
This further 
yields \eqref{eq:etastar.comprEuler} for $\denst+\frac{\snorm{\momt}^2}{2}>0$
since $Q(0)=0$.
In total, we have thus verified \eqref{eq:etastar.comprEuler}
and \eqref{eq:Detastar.comprEuler}.
\end{proof}

In the next lemma, we verify the compatibility condition \eqref{eq:integralFentropy}
between $F$ and the entropy functional $\eta$.

\begin{lemma}\label{lem:integralcond.comprEuler}
If $\pot$ satisfies \eqref{eq:pot.properties}, 
then the condition \eqref{eq:integralFentropy} is satisfied.
\end{lemma}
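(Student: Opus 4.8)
The plan is to follow the entropy-flux route from Remark~\ref{rem:integralcondition}: I will exhibit a vector field $\tilde{\f q}\colon\R^{d+1}\to\R^d$ with $\tilde{\f q}\circ D\eta^\ast\in\C^1(\R^{d+1};\R^d)$ for which \eqref{eq:entropyflux.new} holds, so that $\f F(D\eta^\ast(\Phi)):\nabla\Phi=\dv[\tilde{\f q}(D\eta^\ast(\Phi))]$ for every $\Phi\in\Y$, and then \eqref{eq:integralFentropy} follows by integrating over the boundaryless torus $\T$. Guided by the classical Euler entropy flux $(\eta+\pres)\mom/\dens$, the candidate is $\tilde{\f q}(\dens,\mom)=\pres(\dens)\,\mom/\dens$ for $\dens>0$, extended by $\tilde{\f q}(\dens,\mom)=0$ for $\dens\leq 0$ (the extension is irrelevant, since $D\eta^\ast$ never produces negative densities).

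The computation I would carry out is as follows. Fix $\Phi=(\rho,\f\varphi)\in\C^1(\T;\R^{d+1})$ and set $g\coloneqq Q^{-1}\bigl((\rho+\tfrac12\snorm{\f\varphi}^2)_+\bigr)\in\C(\T;[0,\infty))$, so that $D\eta^\ast(\Phi)=(g,g\f\varphi)$ by \eqref{eq:Detastar.comprEuler} and $\eta^\ast(\rho,\f\varphi)=\pres(g)$ by \eqref{eq:etastar.comprEuler}. Inserting $\dens=g$, $\mom=g\f\varphi$ into the definition of $\f F$, using $\pres(0)=0\cdot\pot'(0)-\pot(0)=0$ from \eqref{eq:pot.limits}, and noting that $g=0$ forces $g\f\varphi=0$ (so the indicator $\chi_{(0,\infty)}$ causes no trouble), I obtain that $\f F(D\eta^\ast(\Phi))$ is the matrix with top row $g\f\varphi^{T}$ and lower $d\times d$ block $g\,\f\varphi\otimes\f\varphi+\pres(g)I$, valid also on the vacuum set $\{g=0\}$. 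On the other hand, the composition simplifies to $\tilde{\f q}(D\eta^\ast(\rho,\f\varphi))=\eta^\ast(\rho,\f\varphi)\,\f\varphi$, which is $\C^1$ on $\R^{d+1}$ because $\eta^\ast$ is continuously differentiable there by Lemma~\ref{lem:convex} (applicable since $\eta$ has superlinear growth by Lemma~\ref{lem:eta.comprEuler}). Differentiating, and using that the components of $D\eta^\ast$ are $\partial_\rho\eta^\ast=g$ and $\partial_{\f\varphi_i}\eta^\ast=g\f\varphi_i$ together with $\eta^\ast=\pres(g)$, I get $\partial_\rho[\eta^\ast\f\varphi_j]=g\f\varphi_j$ and $\partial_{\f\varphi_i}[\eta^\ast\f\varphi_j]=g\f\varphi_i\f\varphi_j+\pres(g)\delta_{ij}$. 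These entries are precisely those of $\f F(D\eta^\ast(\cdot))$ computed above, i.e.\ \eqref{eq:entropyflux.new} holds; the chain rule then yields $\f F(D\eta^\ast(\Phi)):\nabla\Phi=\dv[\tilde{\f q}(D\eta^\ast(\Phi))]$ pointwise on $\T$, and integration completes the proof.

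The one point requiring care is the vacuum region: $\eta$, $\eta^\ast$ and $\tilde{\f q}$ are not smooth across the hypersurface $\{\rho+\tfrac12\snorm{\f\varphi}^2=0\}$, so one must make sure that the identity $\f F(D\eta^\ast(\cdot))=D[\tilde{\f q}\circ D\eta^\ast](\cdot)^{T}$ and the $\C^1$-regularity of $\tilde{\f q}\circ D\eta^\ast$ survive there. The remedy is to argue throughout with the globally defined composed objects $g$, $D\eta^\ast(\Phi)$ and $\eta^\ast$ — whose continuity and $\C^1$-regularity are provided by Lemmas~\ref{lem:etastar.comprEuler} and \ref{lem:convex} on all of $\R^{d+1}$ — rather than with $\eta^\ast$ or $\tilde{\f q}$ restricted to the interiors of their natural domains, and to use the elementary facts $\pres(0)=0$ and $g\f\varphi=0$ on $\{g=0\}$ to cover the kink; at vacuum points both sides of the Jacobian identity then reduce to the zero matrix, so the identity extends by continuity.
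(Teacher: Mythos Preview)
Your proof is correct and coincides with the alternative argument the paper records in Remark~\ref{rem:entropyflux}: you verify \eqref{eq:entropyflux.new} with $\tilde{\f q}(\dens,\mom)=\pres(\dens)\,\mom/\dens$ after observing that $\tilde{\f q}\circ D\eta^\ast(\denst,\momt)=\eta^\ast(\denst,\momt)\,\momt$, and then integrate the resulting divergence. The paper's \emph{main} proof of the lemma is a minor repackaging of the same computation: instead of first establishing the matrix identity \eqref{eq:entropyflux.new} and then invoking the chain rule, it expands the scalar $\f F(D\eta^\ast(\denst,\momt)):\nabla(\denst,\momt)$ directly via \eqref{eq:etastar.comprEuler}--\eqref{eq:Detastar.comprEuler} and recognizes it as $\dv[\momt\,\eta^\ast(\denst,\momt)]$ in one line. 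The two routes differ only in whether one differentiates in the state variable first or in the spatial variable first; your explicit treatment of the vacuum set $\{g=0\}$ is more careful than either version in the paper.
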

\begin{proof}
We first consider the integrand of \eqref{eq:integralFentropy}.
Using \eqref{eq:Detastar.comprEuler}, 
for all $(\denst,\momt)\in\C^1(\T;\R^{d+1})$ we have
\[
\begin{aligned}
\f F(&D\eta^\ast(\denst,\momt)):\nabla
\begin{pmatrix} 
\denst \\ 
\momt 
\end{pmatrix}
= \begin{pmatrix} 
\momt^T \ Q^{-1}\bp{\bp{\denst+\frac{\snorm{\momt}^2}{2}}_+} \\ 
\momt\otimes\momt \ Q^{-1}\bp{\bp{\denst+\frac{\snorm{\momt}^2}{2}}_+}
+ p\circ Q^{-1}\bp{\bp{\denst+\frac{\snorm{\momt}^2}{2}}_+}\, I
\end{pmatrix}
:\nabla
\begin{pmatrix} 
\denst \\ 
\momt 
\end{pmatrix}
\\
&= \momt\cdot\nabla\Bp{\denst+\frac{\snorm{\momt}^2}{2}} \
Q^{-1}\Bp{\Bp{\denst+\frac{\snorm{\momt}^2}{2}}_+} 
+ p\circ Q^{-1}\Bp{\Bp{\denst+\frac{\snorm{\momt}^2}{2}}_+} \dv\momt
\\
&=\dv \bb{\momt \,\eta^\ast(\denst,\momt)},
\end{aligned}
\]
where the last equality follows from \eqref{eq:etastar.comprEuler}.
Integrating this identity
yields \eqref{eq:integralFentropy}.
\end{proof}

\begin{remark}\label{rem:entropyflux}
We can also show \eqref{eq:integralFentropy} 
by verifying the alternative condition \eqref{eq:entropyflux.new}.
Indeed, we can use \eqref{eq:etastar.comprEuler} and 
\eqref{eq:Detastar.comprEuler} to derive
\[
\begin{aligned}
\f F(D\eta^\ast(\denst,\momt))
&= \begin{pmatrix} 
\momt^T \ Q^{-1}\bp{\bp{\denst+\frac{\snorm{\momt}^2}{2}}_+} \\ 
\momt\otimes\momt \ Q^{-1}\bp{\bp{\denst+\frac{\snorm{\momt}^2}{2}}_+}
+ p\circ Q^{-1}\bp{\bp{\denst+\frac{\snorm{\momt}^2}{2}}_+}\, I
\end{pmatrix}
\\
&= \begin{pmatrix} 
\momt^T \ D_\denst\eta^\ast(\denst,\momt) \\ 
\momt\otimes D_\momt\eta^\ast(\denst,\momt)
+ \eta^\ast(\denst,\momt)\, I
\end{pmatrix}\\
&=D \bb{
\momt \,\eta^\ast(\denst,\momt) }
= D \bb{\momt\, p (D_\denst\eta^\ast(\denst,\momt))}
= D \bb{\tilde{\f q}\circ D\eta^\ast}(\denst,\momt)
\end{aligned}
\]
for $\tilde{\f q}(\dens,\mom)=\frac{\mom}{\dens} \pres(\dens)$.
This shows \eqref{eq:entropyflux.new},
which implies \eqref{eq:integralFentropy} by Remark \ref{rem:integralcondition}.
However,
we cannot use the classical entropy-flux condition \eqref{eq:entropypair}
in the present situation,
since $\f F$ is not differentiable.
\end{remark}

It remains to show that 
the function $\mathcal K$ defined in \eqref{K:CompEul} is a suitable choice.
To show this, we have to impose more restrictive conditions 
on $\pres$ and $\pot$.

\begin{lemma}\label{lem:nonlin.convex.compEuler}
Assume that additionally to \eqref{eq:pot.properties}
there exist constants $c,\alpha>0$ such that
\begin{equation}\label{eq:est.pres.by.pot}
\pres(\dens)\leq c(1+\pot(\dens)), 
\end{equation}
for all $h\geq 0$,
and such that the functions $\pres$ and $\alpha\pot-\pres$
are convex and non-negative.
Then \eqref{BoundF} holds for some $C>0$,
and
for any $\momt\in\C^1(\T;\R^d)$
the mapping
\begin{equation}
\dom\E\to\R,\quad
(\dens,\mom)\mapsto \int_{\T}
 \left (\frac{ \mom \otimes \mom}{\dens} + \pres(\dens)I \right ): \nabla \momt 
\de \f x 
+ \mathcal{K}_\alpha(\momt )\E(\dens ,\mom) \label{mapping}
\end{equation}
is convex, lower semi-continuous and non-negative
for $\mathcal{K}_{\alpha}$ as in \eqref{K:CompEul}.
\end{lemma}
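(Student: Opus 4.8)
The plan is to verify the two assertions separately: first the growth bound \eqref{BoundF}, then the convexity, lower semicontinuity and non-negativity of the mapping \eqref{mapping}. For \eqref{BoundF}, I would estimate $\snorm{\f F(\dens,\mom)}$ componentwise. The first row is $\mom^T$, so $\snorm{\mom}\leq \snorm{\mom}^2/(2\dens)+\dens/2\leq \eta(\dens,\mom)+\dens/2$ by Young's inequality for $\dens>0$ (and the trivial case $\dens=0$, $\mom=\f 0$). For the second row, $\bigl\lvert \mom\otimes\mom/\dens\bigr\rvert\leq \snorm{\mom}^2/\dens\leq 2\eta(\dens,\mom)$, and $\snorm{\pres(\dens)I}\lesssim \pres(\dens)\leq c(1+\pot(\dens))\leq c(1+\eta(\dens,\mom))$ by the hypothesis \eqref{eq:est.pres.by.pot}. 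The linear term $\dens/2$ is dominated by $\eta+1$ because $\pot$ has superlinear growth (hence $\dens\leq \varepsilon\pot(\dens)+C_\varepsilon$). Combining these gives \eqref{BoundF} with a suitable $C>0$.

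The main work is the convexity of \eqref{mapping}. I would first recall the classical fact that $(\dens,\mom)\mapsto \snorm{\mom}^2/\dens$ (extended by $0$ at $(0,\f 0)$ and $+\infty$ otherwise) is convex and lower semicontinuous on $\R\times\R^d$, being the perspective function of $\mom\mapsto\snorm{\mom}^2$; consequently the same holds for $\eta$ since $\pot$ is convex, and more generally for $(\dens,\mom)\mapsto a\snorm{\mom}^2/\dens+g(\dens)$ for any convex $g$. Now decompose $\nabla\momt = (\nabla\momt)_{\sym,+}+(\nabla\momt)_{\sym,-}$ and write, using $\mom\otimes\mom:\nabla\momt=\mom\otimes\mom:(\nabla\momt)_{\sym}$ and $\pres(\dens)I:\nabla\momt=\pres(\dens)\dv\momt$, the integrand of \eqref{mapping} as
\[
\begin{aligned}
&\frac{\mom\otimes\mom}{\dens}:(\nabla\momt)_{\sym,+}
+\Bigl[\frac{\mom\otimes\mom}{\dens}:(\nabla\momt)_{\sym,-}
+2\norm{(\nabla\momt)_{\sym,-}}_{L^\infty}\frac{\snorm{\mom}^2}{2\dens}\Bigr]\\
&\quad+\pres(\dens)\,(\dv\momt)_{+}
+\Bigl[\pres(\dens)(\dv\momt)_{-}+\alpha\norm{(\nabla\momt)_{\sym,-}}_{L^\infty}\pot(\dens)\Bigr]
+\bigl(\mathcal K_\alpha(\momt)-\cdots\bigr)\pot(\dens),
\end{aligned}
\]
i.e.\ I would split $\mathcal K_\alpha(\momt)\pot(\dens)$ into the two pieces $2\norm{(\nabla\momt)_{\sym,-}}_{L^\infty}\cdot\tfrac12\snorm{\mom}^2/\dens$-compensator and $\alpha\norm{(\nabla\momt)_{\sym,-}}_{L^\infty}\pot(\dens)$, noting $\mathcal K_\alpha(\momt)=\max\{2,\alpha d\}\norm{(\nabla\momt)_{\sym,-}}_{L^\infty}\geq 2\norm{\cdot}_{L^\infty}$ and $\geq \alpha\norm{\cdot}_{L^\infty}$ and also $\mathcal K_\alpha(\momt)\geq \alpha\,\norm{(\dv\momt)_-}_{L^\infty}$ since $\snorm{\dv\momt}\leq d\norm{(\nabla\momt)_{\sym}}_2$. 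The bracketed terms are pointwise non-negative: for the first one because $\mom\otimes\mom/\dens$ is positive semidefinite and $\snorm{\mom\otimes\mom/\dens:(\nabla\momt)_{\sym,-}}\leq \norm{(\nabla\momt)_{\sym,-}}_{L^\infty}\tr(\mom\otimes\mom/\dens)=\norm{(\nabla\momt)_{\sym,-}}_{L^\infty}\snorm{\mom}^2/\dens$; for the second because $\pres(\dens)\geq0$, $(\dv\momt)_-\leq d\norm{(\nabla\momt)_{\sym,-}}_{L^\infty}$ pointwise, and $\alpha\pot-\pres\geq0$ so that $\pres(\dens)(\dv\momt)_-\leq \alpha\pot(\dens)\,d\norm{(\nabla\momt)_{\sym,-}}_{L^\infty}$; the leftover coefficient of $\pot(\dens)$ is non-negative by the same $\max$ estimate. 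Hence the integrand is a sum of five non-negative terms.

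It then remains to see that each of these five terms, integrated over $\T$, gives a convex lower semicontinuous functional of $(\dens,\mom)$ on $\dom\E$. The terms $\tfrac{\mom\otimes\mom}{\dens}:A$ with $A=A(x)$ positive semidefinite, and $\tfrac{\snorm{\mom}^2}{\dens}$, are convex and lsc in $(\dens,\mom)$ by the perspective-function property; $\pres(\dens)\,b(x)$ with $b\geq0$ is convex since $\pres$ is convex; $\pot(\dens)\,b(x)$ with $b\geq0$ likewise since $\pot$ is convex. Lower semicontinuity of the integral functionals follows from Fatou's lemma together with pointwise lsc of the integrands and their non-negativity (no domination needed). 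Non-negativity of \eqref{mapping} is immediate from the above decomposition into non-negative terms. I expect the only real obstacle is bookkeeping: making sure the coefficient $\mathcal K_\alpha$ is large enough to absorb simultaneously the $\snorm{\mom}^2/\dens$-defect coming from $(\nabla\momt)_{\sym,-}$ and the $\pres(\dens)$-defect coming from $(\dv\momt)_-$, which is exactly why the factor $\max\{2,\alpha d\}$ appears in \eqref{K:CompEul}; once the algebra of that split is organized correctly, everything else is a routine application of convexity of $\pres$, $\pot$ and the perspective function.
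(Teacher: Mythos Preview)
Your argument for \eqref{BoundF} and for the kinetic part $\frac{\mom\otimes\mom}{\dens}:\nabla\momt$ is essentially the paper's, and is correct. The gap is in the pressure part. You split $\pres(\dens)\,\dv\momt$ into $\pres(\dens)(\dv\momt)_+$ and a compensated negative piece, and you verify \emph{non-negativity} of the latter using only $\alpha\pot-\pres\geq 0$. But you never verify \emph{convexity} of that piece, and your closing list (``$\pres(\dens)b(x)$ with $b\geq 0$, $\pot(\dens)b(x)$ with $b\geq 0$'') does not cover it: the term contains $\pres(\dens)$ multiplied by a \emph{non-positive} coefficient, hence a concave contribution, and adding $\alpha\norm{(\nabla\momt)_{\sym,-}}_{L^\infty}\pot(\dens)$ does not repair convexity in general. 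This is exactly where the hypothesis that $\alpha\pot-\pres$ is \emph{convex} (not merely non-negative) must enter, and you do not invoke it.

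The paper fixes this by decomposing the pressure term via $I:(\nabla\momt)_{\sym,\pm}$ rather than via $(\dv\momt)_\pm$, writing
\[
\pres(\dens)\,\dv\momt+\mathcal K_\alpha(\momt)\pot(\dens)
=\pres(\dens)\,I{:}(\nabla\momt)_{\sym,+}
+\tfrac{1}{d}\pot(\dens)\,I{:}\bigl(\mathcal K_\alpha(\momt)I+\alpha d(\nabla\momt)_{\sym,-}\bigr)
+\bigl(\pres(\dens)-\alpha\pot(\dens)\bigr)\,I{:}(\nabla\momt)_{\sym,-}.
\]
Each summand is now a convex non-negative function of $\dens$: in the first, $\pres$ is convex and $I{:}(\nabla\momt)_{\sym,+}\geq 0$; in the second, $\pot$ is convex and the matrix is positive semi-definite because $\mathcal K_\alpha(\momt)\geq \alpha d\norm{(\nabla\momt)_{\sym,-}}_{L^\infty}$; in the third, $\alpha\pot-\pres$ is convex and non-negative while $I{:}(\nabla\momt)_{\sym,-}\leq 0$, so the product is convex and non-negative. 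If you want to keep your $(\dv\momt)_\pm$ split, you can still rescue convexity by rewriting the bad term as $\bigl(\alpha\pot(\dens)-\pres(\dens)\bigr)(\dv\momt)_-+\bigl(\mathcal K_\alpha(\momt)-\alpha(\dv\momt)_-\bigr)\pot(\dens)$ and checking $\mathcal K_\alpha(\momt)\geq \alpha(\dv\momt)_-$ pointwise; but either way the convexity of $\alpha\pot-\pres$ is indispensable. (A minor side issue: in your bookkeeping the compensator you actually need for the pressure piece is $\alpha d\norm{(\nabla\momt)_{\sym,-}}_{L^\infty}\pot(\dens)$, not $\alpha\norm{(\nabla\momt)_{\sym,-}}_{L^\infty}\pot(\dens)$, consistent with the factor $\max\{2,\alpha d\}$ in $\mathcal K_\alpha$.)
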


\begin{proof}
For $h\leq 0$, estimate \eqref{BoundF} is trivial,
and for $h>0$ we use Young's inequality and \eqref{eq:est.pres.by.pot} to conclude
\[
\snorm{\f F(\dens,\mom)}
\leq C\Bp{\frac{\snorm{\mom}}{\sqrt{\dens}}\sqrt{\dens}+\frac{\snorm{\mom}^2}{h}+\pres(\dens)}
\leq C\Bp{h+\frac{\snorm{\mom}^2}{h}+1+\pot(\dens)}
\leq C\bp{1+\eta(\dens,\mom)},
\]
where we used the superlinear growth of $\pot$ in the last estimate. 
In total, this shows \eqref{BoundF}.
To deduce that \eqref{mapping} is convex and non-negative, 
firstly note that
the mapping
\[
(h,\f m)\mapsto
\int_{\T}\frac{ \f m  \otimes \f m}{h} {:} \nabla \f \varphi 
\de \f x 
+ \mathcal{K}_{\alpha}(\f \varphi )\int_\T\frac{| \f m|^2}{2h}\,\dx
=\int_{\T}\frac{ \f m  \otimes \f m}{h} {:} ((\nabla \f \varphi)_\sym 
+ \frac{1}{2} \mathcal{K}_{\alpha}(\momt )I )\de \f x 
\]
is convex and non-negative because the matrix
\[
(\nabla \f \varphi)_\sym 
+ \frac{1}{2} \mathcal{K}_{\alpha}(\momt )I 
= (\nabla \f \varphi)_{\sym,+} + (\nabla \f \varphi )_{\sym,-}
+ \frac{1}{2} \mathcal{K}_{\alpha}(\momt )I 
\]
is symmetric and positive semi-definite.
For the term $(\nabla \f \varphi)_{\mathrm{sym},+}$ this is clear,
and for the remaining term this follows from 
$\frac{1}{2}\mathcal{K}_\alpha(\f \varphi ) 
\geq \| (\nabla \f \varphi )_{\sym,-}\| _{L^\infty(\O;\R^{d\times d})}$.
Secondly, the mapping
\[
\begin{aligned}
\dens\mapsto
\int_{\T}
&{}\pres(\dens) I : \nabla \momt 
\de \f x 
+ \mathcal{K}_{\alpha}(\momt )\int_{\T} \pot(\dens)\,\de \f x
\\
&=\int_{\T}
\pres(\dens) I:(\nabla\momt)_{\sym,+}
+ \frac{1}{d}\int_{\T} \pot(\dens)I:
\bp{\mathcal{K}_{\alpha}(\momt)I+\alpha d(\nabla\momt)_{\sym,-}}\,\de \f x
\\
&\qquad+ \int_{\T} \bp{\pres(\dens)-\alpha\pot(\dens)}I:(\nabla\momt)_{\sym,-}\de \f x
\end{aligned}
\]
is also convex and non-negative.
Indeed, this follows from the convexity of $\pres$, $\pot$ 
and $\alpha\pot-\pres$ and from 
the fact that $(\nabla\momt)_{\sym,+}$,
$-(\nabla\momt)_{\sym,-}$
and $\mathcal{K}_{\alpha}(\momt)I+\alpha d(\nabla\momt)_{\sym,-}$ 
are positive semi-definite in $\T$.
Note that for the last term,
this follows from 
$\mathcal{K}_\alpha(\f \varphi ) 
\geq \alpha d\| (\nabla \f \varphi )_{\sym,-}\| _{L^\infty(\O;\R^{d\times d})}$.
In total, the asserted convexity and non-negativity of \eqref{mapping} follows. 
Finally, since strong convergence 
implies point-wise convergence almost everywhere of a subsequence,
the non-negativity of the mapping~\eqref{mapping} and Fatou's lemma 
imply the lower semi-continuity of~\eqref{mapping}. 
\end{proof}

Finally, we prove Theorem \ref{thm:exCompEul} on 
existence of energy-variational solutions to the compressible Euler system.

\begin{proof}[Proof of Theorem \ref{thm:exCompEul}]
If $\pres(\dens)=a\dens^\gamma$, 
then $\pot(\dens)=(\gamma-1)^{-1}a\dens^\gamma$,
and one directly sees that all properties from \eqref{eq:pot.properties} 
(or even \eqref{eq:pres.properties}) 
are satisfied, and Lemma \ref{lem:eta.comprEuler} and
Lemma \ref{lem:integralcond.comprEuler} are applicable.
Moreover, we have $\pres(\dens)=(\gamma-1)\pot(\dens)$,
so that the assumptions of Lemma \ref{lem:nonlin.convex.compEuler}
are satisfied with $c=\alpha=\gamma-1$.
Moreover, we may identify $\domE = \dom \E$, which  is convex, \textit{cf.} Remark~\ref{rem:domE}. 
From Theorem \ref{thm:main} we thus conclude the existence of 
energy-variational solutions $(\dens,\mom)$ 
in the sense of Definition~\ref{def:envarEUL*}.
Moreover,
Young's inequality implies
\[ 
|\f m| ^q= \left (\frac{|\f m|}{\sqrt{h}}\right )^q {h}^{\frac{q}{2}}  \leq \frac{q}{2} \frac{|\f m| ^2 }{h} + \frac{2-q}{2} h^{\frac{q}{2-q}} = \frac{\gamma}{1+\gamma} \frac{|\f m| ^2 }{h} + \frac{1}{1+\gamma}  h ^\gamma  \,,
\]
whence
\[
\int_\O \dens(t)^\gamma+\snorm{\mom(t)}^q \,\dx
\leq 
C\,\E(\dens(t),\mom(t))
\leq C\,
\E(\dens_0,\mom_0)
\]
for a.a.~$t\in(0,T)$ and some $C=C(\gamma)>0$.
Finally, the assumptions of Proposition~\ref{prop:betterreg}
are clearly satisfied
such that 
$(\dens,\mom)$ belongs to the asserted function class.
\end{proof}

\begin{remark}
It is readily seen that the previous proof also works for more general
pressure laws than the above choice $\pres(\dens)=a\dens^\gamma$
since it suffices to satisfy condition \eqref{eq:pot.properties} and 
the assumptions from Lemma \ref{lem:nonlin.convex.compEuler}
to obtain existence.
For example,  
one may consider 
pressure laws of the form $\pres(\dens)=a_1\dens^{\gamma_1}+a_2\dens^{\gamma_2}$
with $a_1,a_2>0$ and $\gamma_1,\gamma_2>1$.
One easily checks that then \eqref{eq:pot.properties}
is satisfied, 
and the assumptions of Lemma \ref{lem:nonlin.convex.compEuler}
hold with $c=\alpha=\max\set{\gamma_1,\gamma_2}-1$.
Another example would be the pressure law $\pres(\dens)=\dens-\log(1+\dens)$
with associated
potential energy $\pot(\dens)=(1+\dens)\log(1+\dens)-\dens$
from Remark \ref{rem:pressurelaw},
where one can choose $c=\alpha=1$.
\end{remark}

\subsection{Comparison with dissipative weak solutions}

To compare the notion of energy-variational solutions with 
existing solution concepts for the compressible Euler system \eqref{eq:comprEuler}, 
we recall the notion of dissipative weak solutions 
for pressure laws $\pres(\dens)=a\dens^\gamma$
(cf.~\cite[Def.~2.1]{Fereisl21_NoteLongTimeBehaviorDissSolEuler}).

\begin{definition}\label{def:weakEul}
We call a tuple $(h,\f m, E )\in L^\infty(0,T; L^\gamma ( \T )) \times L^\infty(0,T; L^q (\T ;\R^d ))  \times \BV$  with $q=2\gamma/(1+\gamma)$ a dissipative weak solution to the compressible Euler system
\eqref{eq:comprEuler} if 
there exists a so-called \emph{Reynolds defect}
$\mathfrak R\in L^\infty_{w^*}(0,T;\mathcal{M}(\T ; \R^{d\times d}_{\sym,+}))$
such that the equations
\begin{subequations}
\label{weakforn*}
\begin{align}
\int_\O h \rho \de\f x\Big|_s^t&=\int_s^t \int_\O h \t \rho + \f m  \cdot \nabla \rho \de \f x \de \tau, \label{mass*}
\\
\begin{split}
\int_\O \f m\cdot \f \varphi \de \f x \Big|_s^t
&= \int_s^t \int_\O \f m   \t \f \varphi + \left (\frac{\f m  \otimes \f m}{h}  \right ) : (\nabla \f \varphi )_{\sym}+ a h^\gamma    (\di \f \varphi)  \de \f x \de \tau \\
&\qquad + \int_s^t \int_{\T} \nabla \f \varphi : \de \mathfrak R (\tau) \de \tau
\end{split}
\label{momentum*}
\end{align}
are fulfilled for all $\rho \in \C^1 (\T\times[0,T])$  and 
$\f\varphi \in \C^1 (\T\times[0,T];\R^d )$,
and for a.a.~$s,t\in(0,T)$, including $s=0$ with $(\dens (0), \mom (0))= ( \dens_0,\mom_0)$. 
The function $E$ is non-increasing and satisfies $E(0+)=\mathcal{E}(h_0,\f m_0)$ and 
\begin{equation}
\mathcal{E}(h (t), \f m(t) ) 
+ c_{\mathfrak R}\int_{\T}\de \tr[\mathfrak R(t)]     \leq E(t)
\label{energy*}
\end{equation}
\end{subequations}
for a.a.~$t\in(0,T)$ and
a constant $ c_{\mathfrak R}\geq 0$.
\end{definition} 
Now we show that energy-variational solutions to \eqref{eq:comprEuler} 
coincide with dissipative weak solutions in the above sense. 

\begin{theorem}\label{thm:measval.CompEul}
Let $\pres(\dens)=a\dens^\gamma$ with $a>0$ and $\gamma>1$,
and let $ ( h_0, \f m_0)\in L^1(\T)\times L^1(\T;\R^d)$ satisfy $ \E(h _0,\f m_0 ) < \infty$. 
Consider a tuple
$(h,\f m, E )\in L^\infty(0,T; L^\gamma ( \T )) \times L^\infty(0,T; L^q (\T ;\R^d ))  \times \BV$ 
with $q=2\gamma/(1+\gamma)$.
Then $(h,\f m, E )$ is an 
energy-variational solution in the sense of Definition~\ref{def:envarEUL*}
with $\alpha=\gamma-1$ if and only if it 
is a dissipative weak solution  in the sense of Definition~\ref{def:weakEul} with $c_{\mathfrak{R}}= \min\left \{ \frac{1}{2}, \frac{1}{d(\gamma-1)} \right \} $.
\end{theorem}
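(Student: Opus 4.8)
The plan is to argue both implications, mirroring the structure of the proof of Theorem~\ref{thm:incompEuler} for the incompressible case but keeping track of the extra pressure term and the fact that the regularity weight $\mathcal K_\alpha$ involves $\max\{2,\alpha d\}$. First I would establish the easy direction: suppose $(h,\f m,E)$ is a dissipative weak solution with Reynolds defect $\mathfrak R$ and constant $c_{\mathfrak R}=\min\{1/2,1/(d(\gamma-1))\}$. Combining~\eqref{mass*} and~\eqref{momentum*} (the latter tested with $-\f\varphi$), we obtain for a.a.\ $s<t$ an identity expressing $[E-\int_\T h\rho+\f m\cdot\f\varphi\,\dx]|_s^t$ plus the flux terms plus $-\int_s^t\int_\T\nabla\f\varphi:\de\mathfrak R$. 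The key is then to bound $-\int_\T\nabla\f\varphi:\de\mathfrak R$ from above by $\mathcal K_\alpha(\f\varphi)[E-\E(h,\f m)]$. Using $\mathfrak R(t)\in\mathcal M(\T;\R^{d\times d}_{\sym,+})$ and the spectral/trace norm duality, one has $-\int_\T\nabla\f\varphi:\de\mathfrak R\le\|(\nabla\f\varphi)_{\sym,-}\|_{L^\infty}\int_\T I:\de\mathfrak R = \|(\nabla\f\varphi)_{\sym,-}\|_{L^\infty}\int_\T\de\tr[\mathfrak R]$, and then~\eqref{energy*} gives $\int_\T\de\tr[\mathfrak R]\le c_{\mathfrak R}^{-1}(E-\E(h,\f m))$. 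Since $c_{\mathfrak R}^{-1}=\max\{2,d(\gamma-1)\}=\max\{2,\alpha d\}$ with $\alpha=\gamma-1$, this is exactly $\mathcal K_\alpha(\f\varphi)[E-\E(h,\f m)]$. Adding $E|_s^t\le 0$ (from monotonicity of $E$) yields~\eqref{relenEul*}, and the energy inequality $\E(h,\f m)\le E$ follows from~\eqref{energy*} since $\tr[\mathfrak R]\ge0$. Hence $(h,\f m,E)$ is an energy-variational solution.

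For the converse, let $(h,\f m,E)$ be an energy-variational solution with $\alpha=\gamma-1$. As in the proof of Theorem~\ref{thm:incompEuler}, I first use that $\Phi\equiv0$ (equivalently $\rho\equiv0$, $\f\varphi\equiv0$) forces $E$ non-increasing. Next, choosing $\f\varphi\equiv0$ but $\rho$ arbitrary in~\eqref{relenEul*} and applying Lemma~\ref{lem:var.affine} (after passing to the weak form via Lemma~\ref{lem:invar}), the term $[E-\int_\T h\rho\,\dx]|_s^t$ contributes only through the affine dependence on $\rho$, which forces the continuity equation~\eqref{mass*} to hold exactly for all $\rho\in\C^1(\T\times[0,T])$ — since $\mathcal K_\alpha$ does not depend on $\rho$ and the $\rho$-dependent part of~\eqref{relenEul*} is affine, both the inequality and its negation hold, giving equality. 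This handles~\eqref{mass*}. Then, since $\mathcal K_\alpha$ is homogeneous of degree one in $\f\varphi$, Proposition~\ref{prop:equality} lets me split the remaining inequality and conclude, for all $\f\varphi\in\C^1(\T\times[0,T];\R^d)$,
\[
-\Big[\int_\T\f m\cdot\f\varphi\,\dx\Big]_0^T+\int_0^T\!\!\int_\T\f m\cdot\t\f\varphi+\Big(\tfrac{\f m\otimes\f m}{h}+p(h)I\Big):\nabla\f\varphi\,\dx\,\dt\le\int_0^T\mathcal K_\alpha(\f\varphi)[E-\E(h,\f m)]\,\dt.
\]
Now I invoke Lemma~\ref{lem:hahn} with $\mathcal V$ the space of mean-zero $\C^1_0$ vector fields (no divergence constraint this time, so $\mathcal I(\f\psi)=(\nabla\f\psi)_{\sym}$ is still injective on $\mathcal V$ by the same affine-linear argument), with $\f l$ the left-hand side functional, and with $\mathfrak p(\Phi)=\int_0^T\max\{2,\alpha d\}\|(\Phi)_-\|_{\C(\T;\R^{d\times d})}(E-\E(h,\f m))\,\dt$. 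This produces $\mathfrak R\in L^\infty_{w^*}(0,T;\mathcal M(\T;\R^{d\times d}_{\sym}))$ with $\langle-\mathfrak R,\Phi\rangle\le\mathfrak p(\Phi)$ and $\langle-\mathfrak R,(\nabla\f\psi)_{\sym}\rangle=\langle\f l,\f\psi\rangle$ for $\f\psi\in\mathcal V$; positivity of $\mathfrak p$ on positive semi-definite $\Phi$ gives $\mathfrak R\in L^\infty_{w^*}(0,T;\mathcal M(\T;\R^{d\times d}_{\sym,+}))$. Testing the momentum relation with $\f\psi=\f e_j$ shows $\int_\T\f m\,\dx$ is constant in time, so the mean-zero restriction on $\f\psi$ can be dropped, yielding~\eqref{momentum*} for all $\f\varphi\in\C^1(\T\times[0,T];\R^d)$. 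Finally, plugging $\Phi(x,t)=-\phi(t)I$ with $0\le\phi\in\C^1_0([0,T))$ into the bound $\langle-\mathfrak R,\Phi\rangle\le\mathfrak p(\Phi)$ gives $\int_0^T\phi\int_\T\de\tr[\mathfrak R]\,\dt\le\max\{2,\alpha d\}\int_0^T\phi(E-\E(h,\f m))\,\dt$, hence $\tr[\mathfrak R(t)]\le\max\{2,\alpha d\}(E(t)-\E(h(t),\f m(t)))$ a.e., which rearranges to~\eqref{energy*} with $c_{\mathfrak R}=1/\max\{2,\alpha d\}=\min\{1/2,1/(d(\gamma-1))\}$. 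Together with $E(0+)=\E(h_0,\f m_0)$ from Theorem~\ref{thm:exCompEul} (or directly from Definition~\ref{def:envarEUL*} via $E(0)=\E(\dens_0,\mom_0)$ and monotonicity), this shows $(h,\f m,E)$ is a dissipative weak solution.

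I expect the main obstacle to be the bookkeeping around the constant: one must verify that $\max\{2,\alpha d\}$ in $\mathcal K_\alpha$ is exactly the reciprocal of the admissible $c_{\mathfrak R}$, and that the trace-norm estimate $-\int_\T\nabla\f\varphi:\de\mathfrak R\le\|(\nabla\f\varphi)_{\sym,-}\|_{L^\infty}\int_\T\de\tr[\mathfrak R]$ is tight enough to close the loop in both directions without losing a factor. A secondary technical point is justifying the use of Lemma~\ref{lem:hahn} here: unlike the incompressible case there is no solenoidal constraint, so $\mathcal I$ maps into $(\nabla\f\psi)_{\sym}$ on all mean-zero fields, and I should double-check injectivity and that the sublinear functional $\mathfrak p$ dominates $\f l\circ\mathcal I^{-1}$ on the image — but this is the same argument as in Section~\ref{sec:incompEuler} and goes through verbatim. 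The pressure term $p(h)I$ is harmless since it is symmetric and enters the flux linearly in $\nabla\f\varphi$, so it simply passes into both the weak momentum equation and (via its contribution to $\E$ through $\pot$) is already accounted for by the energy inequality.
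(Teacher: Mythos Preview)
Your proposal is correct and follows essentially the same approach as the paper's proof: both directions use the spectral/trace-norm duality to relate the Reynolds defect to the regularity weight, and the forward direction constructs $\mathfrak R$ via Lemma~\ref{lem:hahn} applied to the momentum functional with the sublinear bound $\mathfrak p(\Phi)=\max\{2,d(\gamma-1)\}\int_0^T\|(\Phi)_-\|_{\C}(E-\E)\,\dt$. The only cosmetic difference is the order of steps in the forward direction---the paper applies Proposition~\ref{prop:equality} first and then reads off the mass equation from the resulting inequality~\eqref{relenEulmini} with $\f\varphi\equiv0$, whereas you extract~\eqref{mass*} directly from~\eqref{relenEul*} using that $\mathcal K_\alpha$ is independent of $\rho$ (so $\mathcal K_\alpha(0)=0$) before invoking Proposition~\ref{prop:equality} for the momentum part; both orderings are valid since $\mathcal K_\alpha$ depends only on $\f\varphi$.
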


\begin{proof}
Let $(h, \f m, E )$ be an energy-variational solution in the sense of Definition~\ref{def:envarEUL*}. 
Since the regularity weight $\mathcal{K}=\mathcal K_\alpha$ given in~\eqref{K:CompEul} is homogeneous of degree one, we may apply Proposition~\ref{prop:equality} in order to infer $E \big|_{s}^t \leq 0$ and
\begin{multline}
- \left [   \int_{\T}  h
\rho
 +  \f m  \cdot \f \varphi
  \de \f x  \right ] \Big|_{s}^t 
+ \int_s^t 
\int_{\T} h \t \rho
+\f m \cdot  \nabla \rho
\de \f x 
\de s
\\
+ \int_s^t \int_{\T}
\f m \cdot \t \f \varphi 
+  \left (\frac{ \f m  \otimes \f m}{h} + I h ^ \gamma \right ): \nabla \f \varphi 
\de \f x 
+ \mathcal{K}\left (\f \varphi \right )\left [\E(\dens,\mom )- E\right ]
\de  s
\leq 0
\,\label{relenEulmini}
\end{multline}
for every $(\rho , \f \varphi ) \in \C^1([0,T];\C^1(\T;\R^m))$ and a.e. $s<t\in[0,T]$, where $E(0+) = \E(\dens_0,\mom_0)$. 
For the choice 
$\f \varphi \equiv 0$  we infer~\eqref{mass*}, 
but first merely with an inequality sign.
However, since $\rho$ varies in a linear space, 
the equality~\eqref{mass*} follows immediately. 
Choosing $\rho=0$ in~\eqref{relenEulmini} instead implies
\begin{align}
\begin{aligned}
- \int_{\T} \f m \cdot \f \varphi \de \f x \Big|_s^t + \int_s^t \int_{\T}
\f m \cdot \t \f \varphi 
&+  \left (\frac{ \f m  \otimes \f m}{h} + I h ^ \gamma \right ): \nabla \f \varphi 
\de \f x \de \tau \\
&\qquad\qquad
\leq \int_s^t\mathcal{K}( \f \varphi ) \left [ E- \E(h ,\f m) \right ] \de \tau \,.
\end{aligned}
\label{esttimederiEul}
\end{align}
The left-hand side of \eqref{esttimederiEul} defines 
a linear functional $\f l$ by 
\begin{align}
\langle\f l , \f \varphi \rangle ={}&
- \int_{\T} \f m \cdot \f \varphi \de \f x \Big|_0^T + \int_0^T \int_{\T}
\f m \cdot \t \f \varphi 
+  \left (\frac{ \f m  \otimes \f m}{h} + I h ^ \gamma \right ): \nabla \f \varphi 
\de \f x \de \tau 
\end{align}
for $\f \varphi\in \mathcal V$, where
\[
\mathcal V:=\{\f \varphi\in \C^1(\O\times[0,T];\R^d) \mid\int_\O\f\varphi\de x =0\}.
\]
We define the sublinear mapping $\mathfrak p$ by
\[
\begin{aligned}
&\mathfrak p : L^1(0,T;\C(\O ; \R^{d\times d}_{\sym}))  \ra \R, \\
&\mathfrak p(\Phi) := \max \setl{ 2,d(\gamma-1)} \int_0^T \| (\Phi)_{-}\|_{\C(\T;\R^{d\times d})}(E - \mathcal{E}(h,\f m))\de t.
\end{aligned}
\]
From~\eqref{esttimederiEul}, we infer the estimate $\langle \f l, \f \varphi \rangle \leq \mathfrak p(\mathcal{I}(\f \varphi))$ for all $\f \varphi \in \mathcal{V}$. 
Lemma~\ref{lem:hahn}  shows the existence of an element
$\mathfrak R \in L^\infty_{w^*}(0,T;\mathcal{M}(\O ; \R_{\sym}^{d\times d}
)) $
satisfying 
\[
\forall\Phi\in L^1(0,T; \C(\O; \R_{\sym}^{d\times d })):\ \langle -\mathfrak R, \Phi\rangle \leq \mathfrak p(\Phi),
\qquad
\forall\f \varphi \in \mathcal V: \ \langle -\mathfrak R, \nabla \f \varphi\rangle = \langle \f l , \nabla \f \varphi \rangle.
\]
As for the incompressible Euler equations 
(see the proof of Theorem \ref{thm:incompEuler}),
we show that 
$\mathfrak R \in L^\infty_{w^*}(0,T;\mathcal{M}(\O ; \R_{\sym,+}^{d\times d}))$
and that \eqref{momentum*} holds for all $\f \varphi\in \C^1(\O\times[0,T]; \R ^d )$.
Considering
$\Phi(x,t)= -\psi(t)I$ for some $\psi\in\C_0^{1}([0,T))$ with $\psi\geq 0$,
we further have
\[
\int_0^T \psi(t) \int_{\T} \de \tr[\mathfrak R] (t) \de t
=\langle -\mathfrak R, \Phi\rangle 
\leq \mathfrak p(\Phi)
=\max\big \{2, d(\gamma-1)  \big\}\int_0^T \psi(t) (E - \mathcal{E}(h,\f m))\de t.
\]
Since $\psi\geq 0$ is arbitrary,  this directly implies \eqref{energy*} 
for a.a.~$t\in(0,T)$,
where

In order to infer the converse implication, 
let $(h,\f m, E )$ be a dissipative weak solution in the sense of Definition~\ref{def:weakEul}. 
Adding $ E |_{s}^{t}\leq 0$ for $s<t$ and
the identities~\eqref{mass*} and~\eqref{momentum*} with 
$\rho = - \phi$ and $\f \varphi = - \f \psi$, we infer
\begin{equation}
\label{eq:distoev.compr}
\begin{aligned}
&\left [ E - \int_{\T} h \phi 
+ \f m  \cdot \f \psi 
 \de \f x  \right ] \Big|_s^t + \int_s^t 
\int_{\T} h \t \phi
+\f m \cdot  \nabla  \phi 
\de \f x 
\de \tau 
\\& \quad 
+ \int_s^t \int_{\T}
\f m \cdot \t \f \psi 
+  \left (\frac{ \f m  \otimes \f m}{h} + h^\gamma I \right ): \nabla \f \psi 
\de \f x  \de \tau  
+ \int_s^t \int_{\T} \nabla \f \psi : \de \mathfrak R (t) \de \tau 
\leq 0\,
\end{aligned}
\end{equation}
 for a.e.~$s<t\in(0,T)$ and all test functions 
$(\phi,\f \psi )\in \C^1(\T\times [0,T])\times\C^1(\T\times [0,T];\R^m)$. 
From $ \mathfrak R \in L^\infty_{w^*}(0,T;\mathcal M(\T;\R^{d\times d}_{\sym,+}))$,  
the duality between spectral norm and trace norm,
H\"older's inequality, and inequality~\eqref{energy*}, we infer 
\begin{align*}
\int_{\T} \nabla \f \psi : \de \mathfrak R 
&\geq  \int_{\T} (\nabla \f \psi)_{\sym,-}  : \de \mathfrak R 
\geq  -\|(\nabla \f \psi)_{\sym,-} \|_{L^\infty(\T;\R^{d\times d})}  \int_{\T} I : \de \mathfrak R 
\\
& \geq  \|(\nabla \f \psi)_{\sym,-} \|_{L^\infty(\T;\R^{d\times d})} c_{\mathfrak{R}}^{-1} \bb{ \E(\dens , \mom)-E} \,
= \mathcal K_\alpha(\f \psi) \bb{ \E(\dens , \mom)-E}
\end{align*}
a.e.~in $(0,T)$,
where $\alpha=\gamma-1$.
Using these estimates in \eqref{eq:distoev.compr} yields
\eqref{relenEul*}.
\end{proof}
\subsection{Relative entropy inequality and weak-strong uniqueness}
It is readily shown that the Hypothesis~\ref{hypo:smooth} 
is fulfilled for the compressible Euler equations~\eqref{eq:comprEuler},
and that the weak-strong uniqueness principle of Corollary~\ref{cor:uni} holds.
In particular, relation~\eqref{eq:entropyflux.new} was already observed in Remark~\ref{rem:entropyflux}. Nevertheless, the calculation of the relative entropy inequality~\eqref{inuniqu}  for this non-quadratic energy remains a nonstandard task,
and we exemplify it here for the reader's convenience. 
All calculations are done along the lines of Proposition~\ref{prop:weakstrong}.
Note that during the calculations only~\eqref{eq:pres.from.pot} is used, but in order to 
derive weak-strong uniqeness, we explicitly need~\eqref{eq:pot.properties} and  
the assumptions of Lemma~\ref{lem:nonlin.convex.compEuler}.

The relative total entropy $\mathcal{R}$ is given by
\begin{align*}
\mathcal{R}(h,\f m | \th,\tm ) ={}& \int_\O \frac{|\f m|^2}{2h} - \frac{|\tm|^2}{2h}- \frac{\tm}{\th}\cdot\left ( \f m-\tm\right ) +\frac{|\tm|^2}{2\th^2}( h -\th) \de \f x\\& + \int_\O   P(\dens)-P(\th)-P'(\th)(\dens-\th)
\de \f x 
\\
={}& \int_\O \frac{h}{2} \left |\frac{\f m}{h} - \frac{\tm}{\th}\right |^2+P(\dens)-P(\th)-P'(\th)(\dens-\th)
\de \f x 
\,,
\end{align*}
the system operator $\mathcal{A}$ by 
\[
\mathcal{A}(\th, \tm) = \begin{pmatrix}
\t \th + \di  \tm \\
\t \tm  + \di \left (\frac{\tm\otimes \tm}{\th} +  \pres(\th) I \right )
\end{pmatrix},
\]
and the relative Hamiltonian is defined via
\[
\begin{aligned}
&\mathcal{W}(h,\f m|\th,\tm) \\
&= \int_\O \left [h \left (\frac{\f m}{h} -\frac{\tm}{\th}\right ) \otimes \left ( \frac{\f m}{h} -\frac{\tm}{\th}\right )+ 
 \left ( 
 p(\dens)-p(\th)-p'(\th)(\dens-\th)
 \right ) I \right ]: \left (\nabla \left (\frac{\tm}{\th}\right )\right )_{\sym} \!\!\de \f x \\
 &\qquad+ \mathcal{K}\left(\frac{\f m}{h}\right )  \mathcal{R}(h,\f m| \th,\tm) \,,
\end{aligned}
\]
where the regularity measure $\mathcal{K}$ is given as above.
\begin{proposition}
Let $(\dens,\mom)$ be energy-variational solution in the sense of Definition~\ref{def:envarEUL*}
with initial value $(\dens_0,\mom_0)$.
Then $(\dens,\mom)$
fulfills the relative entropy inequality 
\begin{multline}
\left [\mathcal{R}(h,\f m | \th,\tm ) + E - \E(h,\f m ) \right ]\Big|_s^t - \int_s^t \mathcal{K}\left (\frac{\tm}{\th}\right ) \left [\mathcal{R}(h,\f m | \th,\tm ) + E - \E(h,\f m ) \right ] \de \tau 
\\
+\int_s^t \mathcal{W}(h,\f m| \th,\tm) + \left \langle \mathcal{A}(\th,\tm) , 
\begin{pmatrix}
P''(\th)(h-\th) - \frac{h\tm}{\th^2}\left (\frac{\f m }{h}-\frac{\tm}{\th}\right ) \\
\frac{h}{\th}\left ( \frac{\f m}{h}-\frac{\tm}{\th}\right ) 
\end{pmatrix}
\right \rangle \de \tau \leq 0 \,
\label{relenEuler}
\end{multline}
for all $\th \in \C^1(\T \times [0,T]; (0,\infty))$ and $\tm \in \C^1(\T\times [0,T]; \R^d)$.
Moreover, if $p(h)=a h^\gamma$, 
and $(\th,\tm)$ is a (classical) solution to \eqref{eq:comprEuler} with
$(\th,\tm)(0)=(\th_0,\tm_0)$,
then $(h,\f m)=(\th,\tm)$.
\end{proposition}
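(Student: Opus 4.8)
The plan is to imitate the derivation of Proposition~\ref{prop:weakstrong} and Corollary~\ref{cor:uni}, carrying out the abstract computation concretely for the non-quadratic energy $\eta(\dens,\mom)=\frac{1}{2}\frac{|\mom|^2}{\dens}+\pot(\dens)$; throughout I write $\f U:=(\dens,\mom)$ and $\tU:=(\th,\tm)$. First I would record that Hypothesis~\ref{hypo:smooth} is in force: on $M:=\interi\dom\eta=(0,\infty)\times\R^d$ the entropy $\eta$ is of class $\C^2$ with positive definite Hessian (the entries of $D^2\eta$ appeared already in Lemma~\ref{lem:nonlin.convex.compEuler}), the flux $\f F$ is of class $\C^1$ there, and by Remark~\ref{rem:entropyflux} the relation~\eqref{eq:entropyflux.new} holds with $\tilde{\f q}(\dens,\mom)=\frac{\mom}{\dens}\pres(\dens)$. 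Since $\th\in\C^1(\T\times[0,T];(0,\infty))$ and $\tm\in\C^1(\T\times[0,T];\R^d)$, the pair
\[
(\denst,\momt):=D\eta(\tU)=\Bigl(-\frac{|\tm|^2}{2\th^2}+\pot'(\th),\ \frac{\tm}{\th}\Bigr)
\]
lies in $\C^1(\T\times[0,T];\R^{d+1})$ and is therefore admissible in the energy-variational inequality~\eqref{relenEul*}.

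Next I would assemble two auxiliary identities, both legitimate because $\tU$ takes values in the interior $M$. The first transcribes~\eqref{weakstrong1} and follows from the fundamental theorem of calculus and the product rule; a convenient simplification here is $\eta(\tU)-D\eta(\tU)\cdot\tU=-\pres(\th)$, which is exactly~\eqref{eq:pres.from.pot}. The second transcribes~\eqref{weakstrong2}: differentiating~\eqref{eq:entropyflux.new}, exploiting the symmetry of the matrix $D^2\eta(\tU)\,D\f F(\tU)$, and using $\int_\O\nabla D\eta(\tU):\f F(\tU)\de\f x=0$ (which is~\eqref{eq:integralFentropy} applied to $\Phi=D\eta(\tU)$), one expresses $\int_\O\di\f F(\tU)\cdot D^2\eta(\tU)(\f U-\tU)\de\f x$ as the Frobenius contraction of $\nabla D\eta(\tU)$ with $\f F(\tU)+D\f F(\tU)(\f U-\tU)$. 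Adding these two identities to~\eqref{relenEul*} evaluated at $(\denst,\momt)=D\eta(\tU)$ and regrouping, the boundary terms collapse to $[\mathcal R(\dens,\mom|\th,\tm)+E-\E(\dens,\mom)]\big|_s^t$; the term carrying $\mathcal K$ becomes $-\int_s^t\mathcal K(\tm/\th)[\mathcal R+E-\E(\dens,\mom)]\de\tau$; using that the mass row of $\f F$ is linear in $\mom$ (so that only $(\nabla(\tm/\th))_{\sym}$ survives) together with~\eqref{eq:pres.from.pot} and $\pres'=\th\pot''$, the flux contributions combine into the Frobenius contraction defining $\mathcal W(\dens,\mom|\th,\tm)$; and the remaining $\t\tU$ and $\di\f F(\tU)$ terms collect into $\langle\mathcal A(\th,\tm),D^2\eta(\tU)(\f U-\tU)\rangle$, whose two components written out coincide with the vector displayed in~\eqref{relenEuler}. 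This proves~\eqref{relenEuler}.

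For the weak-strong uniqueness claim I would specialise to $\pres(\dens)=a\dens^\gamma$, so that $\pot(\dens)=(\gamma-1)^{-1}a\dens^\gamma$ and the hypotheses of Lemma~\ref{lem:nonlin.convex.compEuler} hold with $c=\alpha=\gamma-1$; in particular the map~\eqref{mapping} is convex, whence $\mathcal W(\dens,\mom|\th,\tm)\ge0$. If $(\th,\tm)$ solves~\eqref{eq:comprEuler} classically and starts from the same data as $(\dens,\mom)$, then $\mathcal A(\th,\tm)=0$, so the last integrand in~\eqref{relenEuler} vanishes, and $E(0+)=\E(\dens_0,\mom_0)$ gives $\mathcal R+E-\E(\dens,\mom)=0$ at the initial time. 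Dropping $\int_s^t\mathcal W\de\tau\ge0$ and using $\mathcal K(\tm/\th)\in L^\infty(0,T)$ (since $\tU$ is smooth with $\th$ bounded away from $0$), Gronwall's lemma, together with the a.e.\ nonnegativity of $\mathcal R+E-\E(\dens,\mom)$ coming from $\mathcal R\ge0$ and $E\ge\E(\dens,\mom)$, forces $\mathcal R(\dens,\mom|\th,\tm)+E-\E(\dens,\mom)=0$ a.e.\ on $(0,T)$; in particular $\mathcal R(\dens,\mom|\th,\tm)=0$ a.e., and the strict convexity of $\eta$ (Lemma~\ref{lem:eta.comprEuler}) yields $(\dens,\mom)=(\th,\tm)$. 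The main obstacle is the middle step of the first part: establishing the symmetry of $D^2\eta(\tU)\,D\f F(\tU)$ for this explicit, non-quadratic $\eta$ and then matching the resulting terms — especially the relative-pressure term — against the definitions of $\mathcal R$, $\mathcal W$ and $\mathcal A$; the rest is a faithful transcription of the abstract argument, valid because $\tU$ stays in the region $M$ where $\f F$ and $\eta^\ast$ are smooth.
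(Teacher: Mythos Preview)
Your proposal is correct and follows essentially the same approach as the paper: both insert the test function $\Phi=D\eta(\th,\tm)=(P'(\th)-\tfrac{|\tm|^2}{2\th^2},\,\tm/\th)$ into the energy-variational inequality, combine this with the abstract identities~\eqref{weakstrong1}--\eqref{weakstrong2} from Proposition~\ref{prop:weakstrong}, and then match the resulting terms against $\mathcal R$, $\mathcal W$, and $\langle\mathcal A,\,D^2\eta(\tU)(\f U-\tU)\rangle$; the weak-strong uniqueness argument via Gronwall is likewise identical to Corollary~\ref{cor:uni}. The only difference is one of explicitness: the paper writes out the Hessian $D^2\eta(\th,\tm)$ and the rearrangement of $\langle\mathcal A(\th,\tm),\cdot\rangle$ by hand (your ``main obstacle''), whereas you appeal to the abstract computation and indicate how the pieces line up --- a minor point: the entries of $D^2\eta$ do not actually appear in Lemma~\ref{lem:nonlin.convex.compEuler}, they are computed directly in the paper's proof.
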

\begin{proof} 
First we calculate the second derivative of the entropy function $\eta$ and mulitply it with the difference $(h - \th , \f m -\tm)$, which implies
\begin{align*}
D ^2 \eta ( \th,\tm) \begin{pmatrix}
h-\th \\
\f m - \tm 
\end{pmatrix} ={}& \begin{pmatrix}
P''(\th )
+ \frac{|\tm|^2}{\th^3}& - \frac{\tm^T}{\th^2} 
\\-\frac{\tm }{\th^2}& \frac{1}{\th}I
\end{pmatrix}
\begin{pmatrix}
h-\th \\
\f m - \tm 
\end{pmatrix}\\={}& \left ( 
P''(\th)
(h-\th) - \frac{h\tm}{\th^2}\left (\frac{\f m }{h}-\frac{\tm}{\th}\right ) , \frac{h}{\th}\left ( \frac{\f m}{h}-\frac{\tm}{\th}\right ) \right )^T \,.
\end{align*}
This gives the term the system operator~$\mathcal{A}(\th,\tm)$ is tested with in~\eqref{relenEuler}. 
For this, we observe by some calculations and the identity $\frac{p'(\th) }{\th}= P''(\th)$ that
\begin{equation}
\begin{aligned}
\Big \langle \mathcal{A}(\th,\tm) &, 
\begin{pmatrix}
P''(\th)
 (h-\th) - \frac{h\tm}{\th^2}\left (\frac{\f m }{h}-\frac{\tm}{\th}\right ) \\
\frac{h}{\th}\left ( \frac{\f m}{h}-\frac{\tm}{\th}\right ) 
\end{pmatrix}
\Big \rangle 
\\
&= 
\int_\O  \t \th P''(\th) (h-\th) - \t \th \frac{h\tm}{\th^2}\left (\frac{\f m }{h}-\frac{\tm}{\th}\right ) \de \f x 
\\
&\qquad
+ \int_\O 
\di \tm
 \left (
P''(\th) (h-\th) - \frac{h\tm}{\th^2}\left (\frac{\f m }{h}-\frac{\tm}{\th}\right ) \right ) \de \f x 
\\
&\qquad
+ \int_\O (\t \th \frac{\tm}{\th} + \th \t \frac{\tm}{\th})  \frac{h}{\th}\left ( \frac{\f m}{h}-\frac{\tm}{\th}\right )  \de \f x \\
&\qquad
+ \int_\O \left ( 
\di \tm \frac{\tm}{\th}+ \tm \nabla \left ( \frac{\tm}{\th}\right )
+  \nabla \th  p'(\th) \right ) \frac{h}{\th}\left ( \frac{\f m}{h}-\frac{\tm}{\th}\right ) \de \f x 
\\
&= 
\int_\O \left [\t\th + \di \tm \right ] 
P''(\th) (h-\th)  \de \f x 
\\
&\qquad
+ \int_\O \left [ \t \left (\frac{\tm}{\th} \right )+ \frac{\tm}{\th} \cdot \nabla \left ( \frac{\tm}{\th}\right )
+  \nabla \th  P''(\th)  \right ]  h\left ( \frac{\f m}{h}-\frac{\tm}{\th}\right ) \de \f x \,.\label{soloperator}
\end{aligned}
\end{equation}
Adding and substracting the energy $\E(h,\f m)$ in the first term 
of the energy-variational formulation~\eqref{relenEul*}
and choosing $\rho = 
P'(\th)- \frac{|\tm|^2}{2\th^2} $ and $\f \varphi = \frac{\tm}{\th}$ 
as test functions, we further observe that
\[
\begin{aligned}
&\left [ E- \E(h,\f m)  + \int_{\T}
P(\dens)-P'(\th)h  + \frac{|\f m|^2}{2h}  - \f m  \cdot \frac{\tm}{\th} + \frac{| \tm|^2}{2\th^2} h \de \f x  \right ] \Big|_s^t 
\\
&\quad
+ \int_s^t \!\!
\int_{\T} h \t  \th 
P''(\th)- h \t \frac{\tm}{\th}\cdot \frac{\tm}{\th} 
+
 \f m \cdot   \nabla\th 
 P''(\th) - \f m \cdot \nabla \frac{ \tm}{\th} \cdot \frac{ \tm}{\th}
\de \f x 
\de s
\\
&\quad
+ \int_s^t \!\!\int_{\T}
\f m \cdot \t \frac{\tm}{\th} +  \left (\frac{ \f m  \otimes \f m}{h} +  
p(h) I\right ): \nabla \frac{\tm}{\th}
\de \f x 
+ \mathcal{K}\left ( \frac{\tm}{\th}\right ) \left [ \E(h ,\f m) - E\right ] 
\!\de  s
\leq 0
\,.
\end{aligned}
\]
We now invoke the identity 
\[
h \t \th P''(\th) = \t \th P''(\th)\th + \t \th P''(\th) (h-\th)= 
- \t \bb{ P(\th) - P'(\th) \th } + \t \th P''(\th) (h-\th)
\]
to introduce the relative energy in the first line.
Subsequently, we use equation~\eqref{soloperator}
to deduce
\[
\begin{aligned}
&\left [ E- \E(h,\f m)  +\mathcal{R}( h,\f m | \th,\tm)   \right ] \Big|_s^t+ \int_s^t
\mathcal K \left ( \frac{\tm}{\th}\right )  \left [ \E(h ,\f m) - E\right ] 
\de  s 
\\
&\quad
+ \int_s^t\!\! \int_{\T}
 \left (\frac{ \f m  \otimes \f m}{h} + 
 p(\dens ) I \right ): \nabla \frac{\tm}{\th}- \f m \cdot \nabla \frac{ \tm}{\th} \cdot\frac{ \tm}{\th} - \frac{\tm}{\th} \cdot\nabla \frac{\tm}{\th}\cdot  \f m 
\de \f x \de \tau
\\
&\quad
+ \int_s^t\!\!\int_\O 
\left [  \frac{\tm}{\th} \cdot \nabla \left ( \frac{\tm}{\th}\right )
+ 
 \nabla \th  
 P''(\th)\right ]  \cdot h\frac{\tm}{\th}
-\left [ \th \di \frac{\tm}{\th} +\nabla \th \cdot \frac{\tm}{\th} \right ] 
P''(\th) (h-\th)
\de \f x \de \tau
\\
&\quad
+\int_s^t \left \langle \mathcal{A}(\th,\tm) , 
\begin{pmatrix}
P''(\th) (h-\th) - \frac{h\tm}{\th^2}\left (\frac{\f m }{h}-\frac{\tm}{\th}\right ) \\
\frac{h}{\th}\left ( \frac{\f m}{h}-\frac{\tm}{\th}\right ) 
\end{pmatrix}
\right \rangle \de \tau 
\leq 0\,.\label{lastnumber}
\end{aligned}
\]
With the identity $\th  P''(\th)=p'(\th)$ and 
integration by parts,
the second and the third line can be transformed to
\[
\begin{aligned}
\int_s^t\!\!\int_\O \di \frac{\tm}{\th} \left ( 
p(\dens){-}p(\th) {-}p'(\th)(\dens{-}\th)
 \right )
+ h \left ( \frac{\f m}{h}- \frac{\tm}{\th}\right ) \otimes \left ( \frac{\f m}{h}- \frac{\tm}{\th}\right ) : \left (\nabla \frac{\tm}{\th} \right )_{\sym}\!\!\de \f x \de\tau \,,
\end{aligned}
\]
which implies the relative entropy inequality~\eqref{relenEuler}. 
The weak-strong uniqueness principle now follows as in the proof of 
Corollary~\ref{cor:uni},
where we required that 
the relative entropy $\mathcal{R}$ and the relative Hamiltonian $\mathcal{W}$ are non-negative.
For this purpose, we assume $p(h)=a h^\gamma$ again,
which satisfies~\eqref{eq:pot.properties} and  
the assumptions of Lemma~\ref{lem:nonlin.convex.compEuler}.
\end{proof}

\small

\end{document}